\theoremstyle{plain}
\newtheorem{thm}{Theorem}[section]
\newtheorem{lem}{Lemma}[section]
\newtheorem{prop}{Proposition}[section]
\theoremstyle{definition}
\newtheorem{rem}{Remark}[section]
\newtheorem{defi}{Definition}[section]
\newtheorem{conv}{Convention}[section]
\numberwithin{equation}{section}
\def\rhs{\operatorname{RHS}}
\def\lhs{\operatorname{LHS}}
\def\esup{\operatornamewithlimits{ess\,sup}}
\def\RHS{\operatorname{RHS}}
\def\LHS{\operatorname{LHS}}
\def\R{\mathbb R}
\def\Z{\mathbb Z}
\def\ap{\approx}
\def\qq{\qquad}
\def\a{\alpha}
\def\b{\beta}
\def\O{\Omega}
\def\la{\lambda}
\def\vp{\varphi}
\def\s{\sigma}
\def\i{\infty}
\def\t{\theta}
\def\I{(0,\i)}
\def\rw{\rightarrow}
\def\dn{\downarrow}
\def\uu{\uparrow\uparrow}
\def\dd{\downarrow\downarrow}
\def\ls{\lesssim}
\def\gs{\gtrsim}
\def\R{\mathbb R}
\def\M{\mathcal M}
\def\mp{{\mathcal M}}
\def\W{{\mathcal W}}
\begin{document}

\title[Some new iterated Hardy-type inequalities: The case $\t = 1$]{Some new iterated Hardy-type inequalities: The case $\t = 1$}

\author[A.~Gogatishvili, R.Ch.~Mustafayev and L.-E.~Persson]{A.~Gogatishvili, R.Ch.~Mustafayev and L.-E.~Persson}

\thanks{The research of the first author was partly supported by the grants
201/08/0383 and 13-14743S of the Grant Agency of the Czech Republic and RVO:
67985840.  The research of the first and second authors was partly
supported by the joint project between  Academy of Sciences of
Czech Republic and The Scientific and Technological Research
Council of Turkey}

\begin{abstract}
In this paper we characterize the validity of the Hardy-type
inequality
\begin{equation*}
\left\|\left\|\int_s^{\i}h(z)dz\right\|_{p,u,(0,t)}\right\|_{q,w,\I}\leq
c \,\|h\|_{1,v,\I}
\end{equation*}
where $0<p< \i$, $0<q\leq +\infty$, $u$, $w$ and $v$ are weight
functions on  $(0,\infty)$. It is pointed out that this
characterization can be used to obtain new characterizations for
the boundedness between weighted Lebesgue spaces  for Hardy-type
operators restricted to the cone of monotone functions and for the
generalized Stieltjes operator.
\end{abstract}

\keywords{ Iterated Hardy inequalities, discretization, weights}
\subjclass[2000]{ Primary 26D10, 46E20 }

\maketitle


\section{Introduction}\label{in}

Throughout the paper we assume that $I : = (a,b)\subseteq (0,\i)$.
By $\mp (I)$ we denote the set of all measurable functions on $I$.
The symbol $\mp^+ (I)$ stands for the collection of all $f\in\mp
(I)$ which are non-negative on $I$, while $\mp^+ (I,;\dn)$ is used
to denote the subset of those functions which are non-increasing
on $I$. The family of all weight functions (also called just
weights) on $I$, that is, locally integrable non-negative
functions on $\I$, is denoted by $\W (I)$.

For $p\in (0,+\i]$ and $w\in \mp^+(I)$, we define the functional
$\|\cdot\|_{p,w,I}$ on $\mp (I)$ by
\begin{equation*}
\|f\|_{p,w,I} : = \left\{\begin{array}{cl}
                             \left(\int_I |f(x)|^p w(x)\,dx \right)^{1/p} & \qq\mbox{if}\qq p<+\i \\
                             \esup_{I} |f(x)|w(x) & \qq\mbox{if}\qq p=+\i.
                           \end{array}
\right.
\end{equation*}

If, in addition, $w\in \W(I)$, then the weighted Lebesgue space
$L^p(w,I)$ is given by
\begin{equation*}
L^p(w,I) = \{f\in \mp (I):\,\, \|f\|_{p,w,I} < +\i\}
\end{equation*}
and it is equipped with the quasi-norm $\|\cdot\|_{p,w,I}$.

When $w\equiv 1$ on $I$, we write simply $L^p(I)$ and
$\|\cdot\|_{p,I}$ instead of $L^p(w,I)$ and $\|\cdot\|_{p,w,I}$,
respectively.

Everywhere in the paper, $u$, $v$ and $w$ are weights. We denote
by
$$
U(t) : =\int_0^t u(s)ds,\qq V(t) : =\int_0^t v(s)ds \qq \mbox{for
every}\,\, t\in\I,
$$
and assume that $U(t) > 0$ for every $t \in \I$.

In this paper we characterize the validity of the inequality
\begin{equation}\label{mainn}
\left\|\left\|\int_s^{\i}h(z)dz\right\|_{p,u,(0,t)}\right\|_{q,w,\I}\leq
c \|h\|_{\t,v,\I}
\end{equation}
where $0<p<\i$, $0<q\leq +\infty$, $\t = 1$, $u$, $w$ and $v$ are
weight functions on  $(0,\infty)$. Note that inequality
\eqref{mainn} have been considered in the case $p=1$ in
\cite{gop2009} (see also \cite{g1}), where the result is presented
without proof, in the case $p=\i$ in \cite{gop} and in the case
$\t=1$ in \cite{gjop} and \cite{ss}, where the special type of
weight function $v$ was considered, and, recently, in \cite{gmp}
in the case $0<p<\i$, $0<q\leq +\infty$, $1 < \t \le \i$.

We pronounce that the characterization of the inequality
\eqref{mainn} is important because many inequalities for classical
operators  can be reduced to this form.  Just to illustrate this
important fact we give two applications in Section
\ref{Some.Applications} of the obtained results. Firstly, we
present some new characterizations of weighted Hardy-type
inequalities restricted to the cone of monotone functions (see
Theorems~\ref{main3} and \ref{main30000}). Secondly, we point out
boundedness results in weighted Lebesgue spaces concerning the
weighted Stieltjes's transform  (see
Theorems~\ref{Stieltjes.ineq.thm.1.1} and
\ref{Stieltjes.ineq.02}). Here we also need to prove some
reduction theorems of independent interest (see
Theorems~\ref{Reduction.thm.1.1},  \ref{Reduction.thm.1.2} and
\ref{Reduction.thm.1.3}).

Our approach is based on discretization and anti-discretization
methods developed in \cite{gp1}, \cite{gp2}, \cite{gjop} and
\cite{gmp}. Some basic facts concerning these methods and other
preliminaries are presented in Section \ref{pre}. In Section
\ref{mr} discretizations of the inequalities \eqref{mainn} are
given. Anti-discretization of the obtained conditions in Section
\ref{mr} and the main results (Theorems \ref{main1001},
\ref{main1002} and \ref{main2002}) are stated and proved in
Section \ref{Antidiscretization}. Finally,  the described
applications  can be found in Section \ref{Some.Applications}.

\section{Notations and Preliminaries}\label{pre}

Throughout the paper, we always denote by  $c$ or $C$ a positive
constant, which is independent of the main parameters but it may
vary from line to line. However a constant with subscript such as
$c_1$ does not change in different occurrences. By $a\lesssim b$,
($b\gtrsim a$) we mean that $a\leq \la b$, where $\la >0$ depends
only on inessential parameters. If $a\lesssim b$ and $b\lesssim
a$, we write $a\approx b$ and say that $a$ and $b$ are
equivalent. Throughout the paper we use the abbreviation $\LHS
(*)$ ($\RHS(*)$) for the left (right) hand side of the relation
$(*)$. By $\chi_Q$ we denote the characteristic function of a set
$Q$. Unless a special remark is made, the differential element
$dx$ is omitted when the integrals under consideration are the
Lebesgue integrals.

\begin{conv}\label{Notat.and.prelim.conv.1.1}
{\rm (i)} Throughout the paper we put $1 / (+\i) =0$, $(+\i) /
(+\i) = 0$, $1 / 0 = (+\i)$, $0/0 = 0$, $0 \cdot (\pm \i) = 0$,
$(+\i)^{\a} = +\i$ and $\a^0 = 1$ if $\a \in (0,+\i)$.

{\rm (ii)} If $p\in [1,+\i]$, we define $p'$ by $1/p + 1/p' = 1$.
Moreover, we put $p^* = \frac{p}{1-p}$ if $p\in (0,1)$ and $p^* =
+\i$ if $p=1$.

{\rm (iii)} If $I = (a,b) \subseteq \R$ and $g$ is a monotone
function on $I$, then by $g(a)$ and $g(b)$ we mean the limits
$\lim_{x\rw a+}g(x)$ and $\lim_{x\rw b-}g(x)$, respectively.
\end{conv}

In the paper we shall use the Lebesgue-Stieltjes integral. To this
end, we recall some basic facts.

Let $\vp$ be non-decreasing and finite function on the interval $I
: = (a,b)\subseteq \R$. We assign to $\vp$ the function $\la$
defined on subintervals of $I$ by
\begin{align}
\la ([\a,\b]) & = \vp (\b+) - \vp(\a-), \label{eq+-}\\
\la ([\a,\b)) & = \vp (\b-) - \vp(\a-), \label{eq--}\\
\la ((\a,\b]) & = \vp (\b+) - \vp(\a+), \label{eq++}\\
\la ((\a,\b)) & = \vp (\b-) - \vp(\a+). \label{eq-+}
\end{align}
The function $\la$ is a non-negative, additive and regular
function of intervals. Thus (cf. \cite{ru}, Chapter 10), it admits
a unique extension to a non-negative Borel measure $\la$ on $I$.

The formula \eqref{eq--} imply that
\begin{equation}\label{eq.alphabeta}
\int_{[\a,\b)} d\vp = \vp(\b-) - \vp(\a-).
\end{equation}

Note also that the associated Borel measure can be determined,
e.g., only by putting
$$
\la ([y,z]) = \vp(z+) - \vp(y-) \qq \mbox{for any}\qq [y,z]\subset
I
$$
(since the Borel subsets of $I$ can be generated by subintervals
$[y,z]\subset I$).

If $J\subseteq I$, then the Lebesgue-Stieltjes integral $\int_J
f\,d\vp$ is defined as $\int_J f\,d\la$. We shall also use the
Lebesgue-Stieltjes integral $\int_J f\,d\vp$ when $\vp$ is a
non-increasing and finite on the interval $I$. In such a case we
put
$$
\int_J f\,d\vp : = - \int_J f\,d(-\vp).
$$

We conclude this section by recalling an integration by parts
formula for Lebeshgue-Stieltjes integrals. For any non-decreasing
function $f$ and a continuous function $g$ on $\R$ the following
formula is valid for $-\i < \a < \b < \i$:
\begin{equation}\label{integration.by.parts}
\int_{[\a,\b)} f(t)\,d(g(t)) = f(\b-)g(\b) - f(\a-)g(\a) +
\int_{[\a,\b)}g(t)d(-f(t-)).
\end{equation}

\begin{rem}\label{Notat.and.prelim.rem.1.2}
Let $I = (a,b)\subseteq \R$. If $f\in C(I)$ and $\vp$ is a
non-decreasing, right continuous and finite function on $I$, then
it is possible to show that, for any $[y,z]\subset I$, the
Riemann-Stieltjes integral $\int_{[y,z]}f\,d\vp$ (written usually
as $\int_y^z f\,d\vp$) coincides with the Lebesgue-Stieltjes
integral $\int_{(y,z]}f\,d\vp$. In particular, if $f,\,g\in C(I)$
and $\vp$ is non-decreasing on $I$, then the Riemann-Stieltjes
integral $\int_{[y,z]}f\,d\vp$ coincides with the
Lebesgue-Stieltjes integral $\int_{(y,z]}f\,d\vp$ for any
$[y,z]\subset I$.
\end{rem}

Let us now recall some definitions and basic facts concerning
discretization and anti-discretization which can be found in
\cite{gp1}, \cite{gp2} and \cite{gjop}.
\begin{defi}\label{def.2.1}
Let $\{a_k\}$ be a sequence of positive real numbers. We say that
$\{a_k\}$ is geometrically increasing or geometrically decreasing
and write $a_k\uu$ or $a_k\dd$ when
$$
\inf_{k\in\Z}\frac{a_{k+1}}{a_k}>1 ~~\mbox{or}
~~\sup_{k\in\Z}\frac{a_{k+1}}{a_k}<1,
$$
respectively.
\end{defi}

\begin{defi}\label{def.2.2}
Let $U$ be a continuous strictly increasing function on $[0,\i)$
such that $U(0)=0$ and $\lim\limits_{t\rightarrow\i}U(t)=\i$. Then
we say that $U$ is admissible.
\end{defi}

Let $U$ be an admissible function. We say that a function $\vp$ is
$U$-quasiconcave if $\vp$ is equivalent to an increasing function on
$(0,\i)$ and $\frac{\vp}{U}$ is equivalent to a decreasing function
on $(0,\i)$. We say that a $U$-quasiconcave function $\vp$ is
non-degenerate if
$$
\lim_{t\rightarrow
0+}\vp(t)=\lim_{t\rightarrow\i}\frac{1}{\vp(t)}=\lim_{t\rightarrow\i}\frac{\vp(t)}{U(t)}=\lim_{t\rightarrow
0+}\frac{U(t)}{\vp(t)}=0.
$$
The family of non-degenerate $U$-quasiconcave functions will be
denoted by $\O_U$. We say that $\vp$ is quasiconcave when
$\vp\in\O_U$ with $U(t)=t$. A quasiconcave function is equivalent to
a concave function. Such functions are very important in various
parts of analysis. Let us just mention that e.g. the Hardy operator
$Hf(x) = \int_0^x f(t)dt$ of a decreasing function, the Peetre
$K$-functional in interpolation theory and the fundamental function
$\|\chi_E\|_X$, $X$ is a rearrangement invariant space, all are
quasiconcave.

\begin{defi}\label{def.2.3}
Assume that $U$ is admissible and $\vp\in\O_U$. We say that
$\{x_k\}_{k\in\Z}$ is a discretizing sequence for $\vp$ with respect
to $U$ if

(i) $x_0=1$ and $U(x_k)\uu$;

(ii) $\vp(x_k)\uu$ and $\frac{\vp(x_k)}{U(x_k)}\dd$;

(iii) there is a decomposition $\Z=\Z_1\cup\Z_2$ such that
$\Z_1\cap\Z_2=\emptyset$ and for every $t\in [x_k,x_{k+1}]$
$$
\vp(x_k)\thickapprox \vp(t) ~~\mbox{if} ~~ k\in\Z_1,
$$
$$
\frac{\vp(x_k)}{U(x_k)}\thickapprox \frac{\vp(t)}{U(t)} ~~\mbox{if}
~~ k\in\Z_2.
$$
\end{defi}

Let us recall (\cite{gp1}, Lemma 2.7) that if $\vp\in\O_U$, then
there always exists a discretizing sequence for $\vp$ with respect
to $U$.

\begin{defi}\label{def.2.4}
Let $U$ be an admissible function and let $\nu$ be a non-negative
Borel measure on $[0,\i)$. We say that the function $\vp$ defined by
$$
\vp(t)=U(t)\int_{[0,\i)}\frac{d\nu(s)}{U(s)+U(t)},~~t\in(0,\i),
$$
is the fundamental function of the measure $\nu$ with respect to
$U$. We will also say that $\nu$ is a representation measure of
$\vp$ with rspect to $U$.

We say that $\nu$ is non-degenerate with respect to $U$ if the
following conditions are satisfied for every $t\in\I$:
$$
\int_{[0,\i)}\frac{d\nu(s)}{U(s)+U(t)}<\i,~t\in\I~ \mbox{and}
~\int_{[0,1]}\frac{d\nu(s)}{U(s)}=\int_{[1,\i)}d\nu(s)=\i.
$$
\end{defi}

We recall from Remark 2.10 of \cite{gp1} that
$$
\vp(t)\ap
\int_{[0,t]}d\nu(s)+U(t)\int_{[t,\i)}U(s)^{-1}d\nu(s),~~t\in(0,\i).
$$

\begin{lem}\label{cor.2.0}{\rm(\cite{gp2}, Lemma 1.5).}
Let $p \in \I$. Let $u$, $w$ be weights and let $\vp$ be defined
by
\begin{equation}\label{eq.2.0}
\vp(t)=\esup_{s\in(0,t)}{U(s)}^{\frac{1}{p}}
\esup_{\tau\in(s,\i)}\frac{w(\tau)}{{U(\tau)}^{\frac{1}{p}}},~~t\in\I.
\end{equation}
Then $\vp$ is the least $U^{\frac{1}{p}}$-quasiconcave majorant of
$w$, and
\begin{equation*}
\begin{split}
\sup_{t\in\I}\vp(t)\left(\frac{1}{U(t)}\int_0^t\left(\int_s^{\i}h(z)dz\right)^pu(s)ds\right)
^{\frac{1}{p}}&\\
&\hspace{-5cm}=\esup_{t\in\I}w(t)\left(\frac{1}{U(t)}\int_0^t\left(\int_s^{\i}h(z)dz\right)^pu(s)ds\right)
^{\frac{1}{p}}
\end{split}
\end{equation*}
for any non-negative measurable $h$ on $\I$. Further, for $t\in\I$
\begin{gather*}
\vp(t)=\esup_{\tau\in\I}w(\tau)\min\left\{1,\left(\frac{U(t)}{U(\tau)}\right)^{\frac{1}{p}}\right\}=
{U(t)}^{\frac{1}{p}}\esup_{s\in(t,\i)}\frac{1}{{U(s)}^{\frac{1}{p}}}\esup_{\tau\in(0,s)}w(\tau),\\
\vp(t)\ap\esup_{s\in\I}w(s)\left(\frac{U(t)}{U(s)+U(t)}\right)^{\frac{1}{p}}.
\end{gather*}
\end{lem}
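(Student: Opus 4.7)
The plan is to establish the three alternative formulas for $\vp$ first, and then use the middle one both to identify $\vp$ as the least $U^{1/p}$-quasiconcave majorant of $w$ and to prove the main identity by a pointwise splitting argument. Starting from the definition, for any fixed $\tau\in\I$ the inner essential supremum over $\sigma\in(s,\i)$ ``sees'' the value $w(\tau)/U(\tau)^{1/p}$ precisely when $s<\tau$; interchanging the two essential suprema and letting $s\to\min(t,\tau)^-$ (using the continuity of $U$) produces the middle formula
\[
\vp(t)=\esup_{\tau\in\I}w(\tau)\min\Bigl\{1,\bigl(U(t)/U(\tau)\bigr)^{1/p}\Bigr\}.
\]
A symmetric rearrangement, with the roles of the inner and outer suprema swapped, yields the third representation $\vp(t)=U(t)^{1/p}\esup_{s>t}U(s)^{-1/p}\esup_{\tau<s}w(\tau)$, and the equivalent form with $U(s)+U(t)$ in the denominator follows from the elementary pointwise equivalence $\min\{1,(U(t)/U(\tau))^{1/p}\}\ap(U(t)/(U(t)+U(\tau)))^{1/p}$, checked by splitting on whether $U(\tau)\le U(t)$ or not.

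From the middle formula, $t\mapsto\vp(t)$ is visibly non-decreasing, while $\vp(t)/U(t)^{1/p}=\esup_\tau w(\tau)\min\{U(t)^{-1/p},U(\tau)^{-1/p}\}$ is visibly non-increasing, so $\vp$ is $U^{1/p}$-quasiconcave; choosing $\tau=t$ in the supremum shows $\vp(t)\ge w(t)$ a.e. To prove minimality, let $\psi$ be any $U^{1/p}$-quasiconcave majorant of $w$. For $\tau\le t$ the monotonicity of $\psi$ gives $\psi(t)\ge\psi(\tau)\ge w(\tau)$; for $\tau>t$ the monotonicity of $\psi/U^{1/p}$ gives $\psi(t)\ge w(\tau)(U(t)/U(\tau))^{1/p}$. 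Combining the two cases and taking the essential supremum over $\tau$ yields $\psi(t)\ge\vp(t)$.

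For the main identity, put $F(t):=\bigl(U(t)^{-1}\int_0^t(\int_s^\i h(z)\,dz)^pu(s)\,ds\bigr)^{1/p}$. Since $(\int_s^\i h)^p$ is non-increasing in $s$, $F(t)^p$ is a weighted average of a non-increasing function and is therefore itself non-increasing, while $U(t)^{1/p}F(t)=\bigl(\int_0^t(\int_s^\i h)^pu\bigr)^{1/p}$ is trivially non-decreasing. The inequality $\esup_t w(t)F(t)\le\sup_t\vp(t)F(t)$ is immediate from $w\le\vp$. For the reverse, insert the middle formula and distribute:
\[
\vp(t)F(t)=\esup_{\tau\in\I}w(\tau)F(t)\min\Bigl\{1,\bigl(U(t)/U(\tau)\bigr)^{1/p}\Bigr\}.
\]
If $\tau\le t$ then $F(t)\le F(\tau)$ gives $w(\tau)F(t)\le w(\tau)F(\tau)$; if $\tau>t$ then $U(t)^{1/p}F(t)\le U(\tau)^{1/p}F(\tau)$ gives $w(\tau)F(t)(U(t)/U(\tau))^{1/p}\le w(\tau)F(\tau)$. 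In either case the quantity inside the essential supremum is dominated by $\esup_\sigma w(\sigma)F(\sigma)$, so $\vp(t)F(t)\le\esup_\sigma w(\sigma)F(\sigma)$ for every $t$, and taking $\sup_t$ closes the estimate. The one delicate step is the rigorous interchange of essential suprema in the derivation of the middle formula; once that is in hand, everything else is a routine monotonicity comparison.
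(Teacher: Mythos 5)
The paper does not prove this lemma; it is quoted verbatim from \cite{gp2} (Lemma 1.5). So there is no in-paper argument to compare against, and your proposal must stand on its own.

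Your proof is sound in outline and the two main steps are correct and well chosen: the rewriting
$\vp(t)=\esup_{\tau}w(\tau)\min\bigl\{1,(U(t)/U(\tau))^{1/p}\bigr\}$
via a Fubini-type interchange of essential suprema (legitimate here because the two-variable integrand is jointly measurable and $U$ is continuous, so $\esup_{s\in(0,c)}U(s)^{1/p}=U(c)^{1/p}$ and $\esup_{s>c}U(s)^{-1/p}=U(c)^{-1/p}$); and the observation that $F$ is non-increasing while $U^{1/p}F$ is non-decreasing, which lets you dominate $w(\tau)F(t)\min\{1,(U(t)/U(\tau))^{1/p}\}$ by $w(\tau)F(\tau)$ in both regimes $\tau\le t$ and $\tau>t$. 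The pointwise equivalence $\min\{1,x\}\ap x/(1+x)$ for $x=(U(t)/U(\tau))^{1/p}$ then gives the last display. Two small points worth tightening: (1) ``choosing $\tau=t$'' in an essential supremum does not literally prove $\vp\ge w$ a.e., since a single point is null; the clean route is to note $\vp(t)\ge\esup_{\tau\in(0,t)}w(\tau)=:W(t)$ and then invoke the standard fact that the non-decreasing envelope $W$ dominates $w$ almost everywhere (a short argument with level sets $\{w>a\}$). (2) In the minimality step you use that a $U^{1/p}$-quasiconcave $\psi$ is genuinely non-decreasing with $\psi/U^{1/p}$ genuinely non-increasing; under the paper's definition quasiconcavity only asks for equivalence to monotone functions, so strictly speaking you obtain $\psi\gtrsim\vp$, which is the correct reading of ``least majorant'' in this setting and should be stated as such. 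With those two clarifications the argument is complete and is a reasonable self-contained substitute for the citation.
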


\begin{thm}\label{thm.2.1}{\rm(\cite{gp1}, Theorem 2.11).}
Let $p,\,q,\,r\in\I$. Assume that $U$ is an admissible function,
$\nu$ is a non-negative non-degenerate Borel measure on $[0,\i)$,
and $\vp$ is the fundamental function of $\nu$ with respect to $U^q$
and $\s\in\O_{U^p}$. If $\{x_k\}$ is a discretizing sequence for
$\vp$ with respect to $U^q$, then
$$
\int_{[0,\i)}\frac{\vp(t)^{\frac{r}{q}-1}}{\s(t)^{\frac{r}{p}}}d\nu(t)\ap
\sum_{k\in\Z}\frac{\vp(x_k)^{\frac{r}{q}}}{\s(x_k)^{\frac{r}{p}}}.
$$
\end{thm}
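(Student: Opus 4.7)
The plan is to decompose the integral along the discretizing sequence, writing
$$ \int_{[0,\i)} \frac{\vp(t)^{r/q-1}}{\s(t)^{r/p}}\,d\nu(t) = \sum_{k\in\Z} \int_{[x_k,x_{k+1})} \frac{\vp(t)^{r/q-1}}{\s(t)^{r/p}}\,d\nu(t), $$
and to prove that each summand on the right is equivalent, with constants independent of $k$, to $\vp(x_k)^{r/q}/\s(x_k)^{r/p}$.

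For the upper bound $\ls$, I would fix $k$ and invoke property (iii) of Definition \ref{def.2.3}. When $k\in\Z_1$ the factor $\vp(t)^{r/q-1}\ap\vp(x_k)^{r/q-1}$ pulls out of the $k$-th integral, the monotonicity implicit in $\s\in\O_{U^p}$ yields $\s(t)^{-r/p}\ls\s(x_k)^{-r/p}$, and the representation formula recalled after Definition \ref{def.2.4} supplies $\int_{[x_k,x_{k+1})}d\nu\ls\vp(x_{k+1})\ap\vp(x_k)$, giving the desired bound. When $k\in\Z_2$ I would instead use $\vp(t)\ap(\vp(x_k)/U(x_k)^q)U(t)^q$ to rewrite $\vp(t)^{r/q-1}\ap\vp(x_k)^{r/q-1}(U(t)/U(x_k))^{r-q}$, and control the remaining integral via the tail estimate $\int_{[x_k,\i)}U(s)^{-q}\,d\nu(s)\ls\vp(x_k)/U(x_k)^q$ from the same representation formula, with the powers of $U$ balanced against $\s(t)^{-r/p}$ through the $U^p$-quasiconcavity of $\s$. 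The reverse estimate $\gs$ follows from a symmetric argument, majorising each $\vp(x_k)^{r/q}/\s(x_k)^{r/p}$ by a piece of the left-hand side via $\vp(x_k)\gs\int_{[x_{k-1},x_k]}d\nu$ for $k\in\Z_1$ and $\vp(x_k)\gs U(x_k)^q\int_{[x_k,x_{k+1}]}U(s)^{-q}\,d\nu(s)$ for $k\in\Z_2$.

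The hard part is the $k\in\Z_2$ case, where $\vp$ genuinely varies on $[x_k,x_{k+1})$ and one must juggle three sources of variation simultaneously: the growth of $\vp$ along $U^q$, that of $\s$ along $U^p$, and the distribution of $\nu$-mass encoded by the representation formula. Keeping the error factors of $U(x_{k+1})/U(x_k)$ and $\s(x_{k+1})/\s(x_k)$ matched is exactly what the geometric monotonicity conditions in Definitions \ref{def.2.3} and \ref{def.2.4} are designed to guarantee, and I expect the bulk of the work to consist in exploiting all three properties of the discretizing sequence at once.
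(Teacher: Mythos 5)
The paper does not actually prove Theorem~\ref{thm.2.1}; it is quoted verbatim from \cite{gp1} (Theorem 2.11), so there is no internal proof here to compare your attempt against. Judged on its own, your sketch correctly identifies the right ingredients for the upper bound: decompose over $[x_k,x_{k+1})$, split into $\Z_1$ and $\Z_2$ using Definition~\ref{def.2.3}(iii), invoke the representation formula after Definition~\ref{def.2.4}, and use the $U^p$-quasiconcavity of $\s$ to balance powers. One imprecision to flag: the per-$k$ claim that $\int_{[x_k,x_{k+1})}\frac{\vp^{r/q-1}}{\s^{r/p}}d\nu\ap\frac{\vp(x_k)^{r/q}}{\s(x_k)^{r/p}}$ is false in general. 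For $k\in\Z_2$ the natural bound you obtain is $\int_{[x_k,x_{k+1})}\ls\vp(x_{k+1})^{r/q}/\s(x_{k+1})^{r/p}$, and since $\s(x_{k+1})/\s(x_k)$ is not bounded (take $\s\equiv 1$, which is $U^p$-quasiconcave), this cannot be converted to a bound with $x_k$ in place of $x_{k+1}$. The upper bound only closes after summing and re-indexing the sum over $k$.

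The lower bound is where the sketch has a genuine gap. You propose to majorize $\vp(x_k)^{r/q}/\s(x_k)^{r/p}$ by a local piece of the integral \emph{via} the estimates $\vp(x_k)\gs\int_{[x_{k-1},x_k]}d\nu$ ($k\in\Z_1$) and $\vp(x_k)\gs U(x_k)^q\int_{[x_k,x_{k+1}]}U^{-q}\,d\nu$ ($k\in\Z_2$), citing the representation formula. But those inequalities run the wrong way: to bound $\vp(x_k)^{r/q}/\s(x_k)^{r/p}$ from above by $\int_{[x_{k-1},x_k)}\vp^{r/q-1}\s^{-r/p}\,d\nu$ you need a \emph{lower} bound on the local $\nu$-mass, namely $\int_{[x_{k-1},x_k]}d\nu\gs\vp(x_k)$ (resp.\ $U(x_k)^q\int_{[x_k,x_{k+1}]}U^{-q}d\nu\gs\vp(x_k)$), and the representation formula by itself gives only the reverse $\ls$. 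Establishing the needed $\gs$ is the real work of the theorem: it requires combining $\vp(x_k)\uu$ and $\vp(x_k)U(x_k)^{-q}\dd$ with the $\Z_1/\Z_2$ dichotomy and the non-degeneracy of $\nu$ to show that a fixed fraction of $\vp(x_k)$ is contributed by the adjacent interval (roughly: because $\vp$ jumps geometrically across $x_k$ and the contribution from outside $[x_{k-1},x_{k+1}]$ is comparable to $\vp(x_{k-1})$ or $U(x_k)^q\vp(x_{k+1})/U(x_{k+1})^q$, which are small fractions of $\vp(x_k)$). This localization step is absent from your outline, so the claimed "symmetric argument" does not close the lower bound.
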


\begin{lem}\label{lem.2.1}{\rm(\cite{gp1}, Corollary 2.13).}
Let $q\in\I$. Assume that $U$ is an admissible function, $f\in\O_U$,
$\nu$ is a non-negative non-degenerate Borel measure on $[0,\i)$ and
$\vp$ is the fundamental function of $\nu$ with respect to $U^q$. If
$\{x_k\}$ is a discretizing sequence for $\vp$ with respect to
$U^q$, then
$$
\left(\int_{[0,\i)}\left(\frac{f(t)}{U(t)}\right)^qd\nu(t)\right)^{\frac{1}{q}}\ap
\left(\sum_{k\in\Z}\left(\frac{f(x_k)}{U(x_k)}\right)^q
\vp(x_k)\right)^{\frac{1}{q}}.
$$
\end{lem}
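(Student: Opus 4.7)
The statement should follow from Theorem \ref{thm.2.1} by a judicious choice of parameters and of the auxiliary function $\s$. My plan is to apply Theorem \ref{thm.2.1} with $p=1$, $r=q$, and the auxiliary function $\s(t):=U(t)/f(t)$. With these choices the exponent $r/q-1=0$ kills the occurrence of $\vp(t)$ on the left-hand side (so that $\vp(t)^{r/q-1}=1$ by the convention in \ref{Notat.and.prelim.conv.1.1}(i)), while $\s(t)^{r/p}=\s(t)^q$ turns the integrand $1/\s(t)^q$ into exactly $(f(t)/U(t))^q$. The summands on the right similarly reduce to $(f(x_k)/U(x_k))^q\vp(x_k)$, and a final $q$-th root yields the desired equivalence.

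To legitimately invoke Theorem \ref{thm.2.1}, the key step is verifying that $\s=U/f$ indeed lies in $\O_{U^p}=\O_U$. For the $U$-quasiconcavity this is immediate from the hypothesis $f\in\O_U$: since $f$ is equivalent to an increasing function on $\I$, the ratio $\s/U=1/f$ is equivalent to a decreasing function, and since $f/U$ is equivalent to a decreasing function, its reciprocal $\s=U/f$ is equivalent to an increasing function.

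For non-degeneracy, one transfers the four limit conditions on $f$ to $\s$ by a direct swap: $\lim_{t\to 0+}\s(t)=\lim_{t\to 0+}U(t)/f(t)=0$ and $\lim_{t\to 0+}U(t)/\s(t)=\lim_{t\to 0+}f(t)=0$ from the first and fourth non-degeneracy conditions on $f$, while $\lim_{t\to\i}1/\s(t)=\lim_{t\to\i}f(t)/U(t)=0$ and $\lim_{t\to\i}\s(t)/U(t)=\lim_{t\to\i}1/f(t)=0$ come from the third and second. Thus $\s\in\O_U$, and Theorem \ref{thm.2.1} applies with the parameters specified.

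The main (very mild) obstacle is only the bookkeeping in this reduction: one has to notice the right substitution $\s=U/f$ and check that the discretizing sequence $\{x_k\}$ for $\vp$ with respect to $U^q$ required by Theorem \ref{thm.2.1} is exactly the one given in the hypothesis of the present lemma, so no change of discretizing sequence is needed. Once these points are dispatched, the equivalence follows without further computation.
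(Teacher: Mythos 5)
Your proposal is correct. The substitution $p=1$, $r=q$, $\s=U/f$ in Theorem~\ref{thm.2.1} does reproduce the stated equivalence, and your verification that $\s=U/f\in\O_U$ (both the $U$-quasiconcavity and all four non-degeneracy limits) is complete and accurate. The paper itself provides no proof of Lemma~\ref{lem.2.1}, citing it directly as Corollary~2.13 of~\cite{gp1}, where it is indeed a corollary of the result quoted here as Theorem~\ref{thm.2.1}; so your derivation recovers exactly the intended route.
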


\begin{lem}\label{lem.2.2}{\rm(\cite{gp1}, Lemma 3.5).}
Let $p,\,q\,\in\I$. Assume that $U$ is an admissible function,
$\vp\in\O_{U^q}$ and $g\in \O_{U^p}$. If $\{x_k\}$ is a
discretizing sequence for $\vp$ with respect to $U^q$, then
$$
\sup_{t\in\I}\frac{\vp(t)^{\frac{1}{q}}}{g(t)^{\frac{1}{p}}}\ap\sup_{k\in\Z}\frac{\vp(x_k)^{\frac{1}{q}}}{g(x_k)^{\frac{1}{p}}}.
$$
\end{lem}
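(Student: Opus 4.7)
\textbf{Proof plan for Lemma \ref{lem.2.2}.} The inequality $\sup_{k\in\Z}\varphi(x_k)^{1/q}/g(x_k)^{1/p} \lesssim \sup_{t\in\I}\varphi(t)^{1/q}/g(t)^{1/p}$ is immediate since $\{x_k\}\subset\I$, so all the work goes into the reverse direction $\sup_{t\in\I}\varphi(t)^{1/q}/g(t)^{1/p} \lesssim \sup_{k\in\Z}\varphi(x_k)^{1/q}/g(x_k)^{1/p}$. My plan is to fix an arbitrary $t\in\I$, locate the unique $k$ with $t\in[x_k,x_{k+1}]$, and use the decomposition $\Z=\Z_1\cup\Z_2$ supplied by Definition \ref{def.2.3} to show that $\varphi(t)^{1/q}/g(t)^{1/p}$ is dominated by either $\varphi(x_k)^{1/q}/g(x_k)^{1/p}$ or $\varphi(x_{k+1})^{1/q}/g(x_{k+1})^{1/p}$.

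The required one-sided bounds for $g\in\Omega_{U^p}$ come straight from its $U^p$-quasiconcavity: $g$ is equivalent to an increasing function, so $g(t)\gtrsim g(x_k)$ on $[x_k,x_{k+1}]$, and $g/U^p$ is equivalent to a decreasing function, so $g(t)/U(t)^p\gtrsim g(x_{k+1})/U(x_{k+1})^p$ on $[x_k,x_{k+1}]$. If $k\in\Z_1$ I use the first bound together with $\varphi(t)\ap\varphi(x_k)$ from (iii) of Definition \ref{def.2.3} and obtain
\begin{equation*}
\frac{\varphi(t)^{1/q}}{g(t)^{1/p}} \lesssim \frac{\varphi(x_k)^{1/q}}{g(x_k)^{1/p}}.
\end{equation*}
If $k\in\Z_2$ I rewrite
\begin{equation*}
\frac{\varphi(t)^{1/q}}{g(t)^{1/p}} = \frac{(\varphi(t)/U(t)^q)^{1/q}}{(g(t)/U(t)^p)^{1/p}},
\end{equation*}
apply $\varphi(t)/U(t)^q\ap\varphi(x_k)/U(x_k)^q$ from Definition \ref{def.2.3}(iii) (taking $t=x_{k+1}$ there also yields $\varphi(x_k)/U(x_k)^q\ap\varphi(x_{k+1})/U(x_{k+1})^q$), and use the second bound on $g/U^p$ to get
\begin{equation*}
\frac{\varphi(t)^{1/q}}{g(t)^{1/p}} \lesssim \frac{(\varphi(x_{k+1})/U(x_{k+1})^q)^{1/q}}{(g(x_{k+1})/U(x_{k+1})^p)^{1/p}} = \frac{\varphi(x_{k+1})^{1/q}}{g(x_{k+1})^{1/p}}.
\end{equation*}
Taking the supremum over $t\in\I$ then gives the claimed inequality.

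I expect no substantive obstacle: the decomposition $\Z=\Z_1\cup\Z_2$ is precisely engineered so that on each subinterval one of the two factors $\varphi(\cdot)$ or $\varphi(\cdot)/U^q$ is quasi-constant, and in both cases the $U^p$-quasiconcavity of $g$ supplies a matching monotonicity bound that pairs with the correct endpoint $x_k$ or $x_{k+1}$. The only minor point to watch is choosing the endpoint consistently in the $\Z_2$ case so that the powers of $U$ cancel, which is why one writes the ratio in the form $(\varphi/U^q)^{1/q}/(g/U^p)^{1/p}$ before inserting the estimates.
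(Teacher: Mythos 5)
Your proof is correct. The paper does not reprove this lemma but merely cites \cite{gp1}, Lemma~3.5; your argument---locating $t\in[x_k,x_{k+1}]$, splitting according to $k\in\Z_1$ or $k\in\Z_2$, and in each case pairing the quasi-constant factor of $\vp$ (resp.\ $\vp/U^q$) with the quasi-increasing property of $g$ at $x_k$ (resp.\ the quasi-decreasing property of $g/U^p$ at $x_{k+1}$) so the powers of $U$ cancel---is exactly the standard discretization argument underlying that reference.
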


%

We shall use some Hardy-type inequalities in this paper. Denote by
$$
\underline{v}(a,b) : = \esup\limits_{s \in I} {v(s)}^{-1},
$$
\begin{equation}\label{Bk}
B(a,b) : = \sup_{h\in \mp^+(I)} \left\|\int_s^b
h(z)dz\right\|_{p,u,I} / \left\|h\right\|_{1,v,I}.
\end{equation}
\begin{lem}\label{lem.2.800}
We have the following Hardy-type inequalities:

{\rm (a)} Let $1\leq p < \i$. Then the inequality
\begin{equation}\label{eq.2.200}
\left\|\int_s^b h(z)dz\right\|_{p,u,I}\leq c
\left\|h\right\|_{1,v,I}
\end{equation}
holds for all $h \in \mp^+(I)$ if and only if
$$
\sup_{t \in I}\left(\int_a^t u(z)dz\right)^{\frac{1}{p}}
\underline{v}(t,b) < \i,
$$
and the best constant $c = B(a,b)$ in \eqref{eq.2.200} satisfies
\begin{equation}\label{eq.2.20001}
B(a,b) \ap \sup_{t\in I}\left(\int_a^t u(z)dz\right)^{\frac{1}{p}}
\underline{v}(t,b).
\end{equation}

{\rm (b)} Let $0<p<1$. Then inequality \eqref{eq.2.200} holds for
all $h \in \mp^+(I)$ if and only if
$$
\left(\int_a^b \left(\int_a^t
u(z)dz\right)^{p^*}u(t)\underline{v}(t,b)^{p^*}\,dt
\right)^{\frac{1}{p^*}} < \i,
$$
and
\begin{align*}
B(a,b) \ap \left(\int_a^b \left(\int_a^t
u(z)dz\right)^{p^*}u(t)\underline{v}(t,b)^{p^*}\,dt
\right)^{\frac{1}{p^*}}.
\end{align*}
\end{lem}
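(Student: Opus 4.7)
My plan is to treat the two cases separately, since the natural tool for $p \ge 1$ (Minkowski's integral inequality) is unavailable when $p < 1$. Throughout, I write $\mathcal{U}(z) := \int_a^z u(s)\,ds$ for brevity.

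\emph{Part~(a): $1 \le p < \infty$.} For sufficiency, Minkowski's integral inequality applied to $\int_s^b h(z)\,dz = \int_a^b h(z)\chi_{(a,z)}(s)\,dz$ yields
\[
\left\|\int_s^b h(z)\,dz\right\|_{p,u,I} \le \int_a^b h(z)\, \mathcal{U}(z)^{1/p}\,dz.
\]
Setting $A := \sup_{t \in I} \mathcal{U}(t)^{1/p}\,\underline{v}(t,b)$, I would verify the pointwise bound $\mathcal{U}(z)^{1/p}/v(z) \le A$ for a.e.\ $z\in I$ by choosing a sequence $t_n \uparrow z$ with $t_n < z$: the definition of $\underline{v}(t_n,b)$ gives $v(z)^{-1} \le \underline{v}(t_n,b)$ for a.e.\ $z > t_n$, and continuity of $\mathcal{U}$ yields $\mathcal{U}(t_n)^{1/p} \to \mathcal{U}(z)^{1/p}$. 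Factoring $\mathcal{U}(z)^{1/p} = (\mathcal{U}(z)^{1/p}/v(z))\cdot v(z)$ inside the integral then produces the inequality with constant $A$. For necessity, fix $t \in I$ and $V < \underline{v}(t,b)$; choose $E \subset (t,b)$ of positive measure on which $v^{-1} > V$. Testing \eqref{eq.2.200} with $h = \chi_E$ gives $\mathcal{U}(t)^{1/p}|E| \le c|E|/V$; passing to the supremum in $V$ and in $t$ yields $A \le c$, so $B(a,b) \approx A$.

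\emph{Part~(b): $0 < p < 1$.} Since Minkowski fails, I would begin with the Lebesgue--Stieltjes integration-by-parts formula \eqref{integration.by.parts} applied to $H(s) := \int_s^b h$; using $H(b) = 0$ and $\mathcal{U}(a) = 0$, this yields the identity
\[
\int_a^b H(s)^p u(s)\,ds = p\int_a^b H(s)^{p-1} \mathcal{U}(s)\, h(s)\,ds.
\]
For sufficiency, I would decompose $I$ dyadically into intervals $J_j$ with $\mathcal{U}(t_j) \approx 2^j$, estimate $\int H^p u$ by $\sum_j H(t_j)^p 2^j$, use subadditivity of $x\mapsto x^p$ ($0<p<1$) to replace $H(t_j) = \sum_{k\ge j}\int_{J_k}h$ by $\sum_{k\ge j}(\int_{J_k}h)^p$ and swap the order of summation, and then apply Hölder's inequality with the conjugate pair $(1/p,\,1/(1-p))$; the exponent identity $p^*+1 = 1/(1-p)$ converts the discrete sum $\sum_j \mathcal{U}(t_j)^{p^*+1}\,\underline{v}(t_j,b)^{p^*}$ back into $\int_a^b \mathcal{U}^{p^*} u\,\underline{v}^{p^*}$, giving exactly $A^p\|h\|_{1,v,I}^p$ on the right, where $A$ denotes the claimed $L^{p^*}$-type integral. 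For necessity, I would construct a test function piecewise on a decomposition of $I$ adapted to the density $\mathcal{U}^{p^*} u\,\underline{v}^{p^*}$ so as to saturate both sides simultaneously, yielding $A \lesssim B(a,b)$.

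The genuine difficulty is part~(b): because $L^p$ for $0<p<1$ is not a Banach space and has no useful duality with one, the dyadic scheme requires careful bookkeeping of the exponents and of the essential suprema in $\underline{v}$, in particular the identification $(\einf_{J_j}v)^{-p^*} \le \underline{v}(t_j,b)^{p^*}$ that lets the discrete Hölder bound be recognised as the continuous integral $A^{p^*}$. A cleaner alternative, if one is available, is to invoke the classical $L^1\to L^p$ Hardy-inequality characterization (Sinnamon, Stepanov), from which the stated condition can be read off after the elementary change of variable $\tau = \mathcal{U}(s)$ that reduces the problem to the Lebesgue-measure case $u\equiv 1$.
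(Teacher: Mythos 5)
The paper itself does not prove this lemma: it records it as known and cites Maz'ya--Rozin, Sinnamon, and Sinnamon--Stepanov. So there is no "paper's proof" to compare against; you are supplying an argument the authors simply quote, and your own closing remark --- that it may be cleaner to invoke the classical $L^1\to L^p$ Hardy characterization --- is precisely what they do.

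Taken on its own terms, your part (a) is essentially complete and correct: Minkowski's integral inequality gives sufficiency once the pointwise bound $\mathcal{U}(z)^{1/p}v(z)^{-1}\le A$ a.e.\ is in hand, and the characteristic-function test gives necessity. (For the a.e.\ bound you should take the exceptional null sets for a countable dense family of points $t$ and then pass to the limit, so that the union of exceptional sets is still null; as written, the sequence $t_n\uparrow z$ depends on $z$ and the exceptional sets are not controlled.) Part (b), however, is a program rather than a proof. The sufficiency scheme you outline --- integration by parts, dyadic decomposition at level sets of $\mathcal{U}$, $p$-subadditivity, discrete H\"older with exponents $(1/p,\,1/(1-p))$, and the exponent identity $p^*+1=1/(1-p)$ --- is a standard and viable route, but you never carry out the two-sided comparison between the resulting discrete sum $\sum_k 2^{k(p^*+1)}\underline{v}(J_k)^{p^*}$ and the target integral $\int_a^b\mathcal{U}^{p^*}u\,\underline{v}(\cdot,b)^{p^*}$, and the necessity direction is entirely deferred to an unspecified extremal construction. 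Either complete the discretization and the test-function argument in part (b), or do as the paper does and cite the references.
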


These well-known results can be found in Maz'ya and Rozin \cite{m},
Sinnamon \cite{s}, Sinnamon and Stepanov \cite{ss} (cf. also
\cite{ok} and \cite{kp}).

We shall also use the following fact (cf. \cite{f}, p. 188):
\begin{equation}\label{Ck}
C(a,b) : = \sup_{h \in \M^+(I)} \left\|h\right\|_{1,I} /
\|h\|_{1,v,I} \ap \underline{v}(a,b).
\end{equation}

Finally, if $q\in (0,+\infty]$ and $\{w_k\}=\{w_k\}_{k\in \Z}$ is
a sequence of positive numbers, we denote by $\ell^q(\{w_k\},\Z)$
the following  discrete analogue of a weighted Lebesgue space: if
$ 0<q<+\infty$, then
\begin{align*}
&\ell^q(\{w_k\},\Z) =\big\{ \{a_k\}_{k\in\Z} :\quad
\|a_k\|_{\ell^q(\{w_k\},\Z)}
:=\big(\sum_{k\in \Z}|a_kw_k|^q\big)^{\frac 1q}<+\infty \big\}\\
\intertext{and} &\ell^\infty(\{w_k\},\Z) =\big\{ \{a_k\}_{k\in\Z}
:\quad
\|a_k\|_{\ell^\infty(\{w_k\},\Z)}:=\sup_{k\in\Z}|a_kw_k|<+\infty
\big\}.
\end{align*}
If $w_k=1$ for all $k\in\Z$, we write simply $\ell^q(\Z)$ instead
of $\ell^q(\{w_k\},\Z)$.

We quote some known results. Proofs  can be found in \cite{Le1} and
\cite{Le2}.
\begin{lem}\label{lem.2.3}
Let $q\in (0,+\infty ]$. If $\{ \tau_k\} _{k\in\Z}$ is a
geometrically decreasing sequence, then
\begin{equation*}
\left\| \tau _k \sum_{m \le k} a_m\right\| _{\ell^q(\Z)} \approx \|
\tau _k a_k\| _{\ell^q(\Z)}
\end{equation*}
and
\begin{equation*}
\left\| \tau _k \sup _{m\leq k}a_m \right\| _{\ell^q(\Z)} \approx
\|\tau _ka_k\| _{\ell^q(\Z)}
\end{equation*}
for all non-negative sequences $\{a_k\}_{k\in\Z}$.

Let $\{ \sigma_k\} _{k\in\Z}$ be a geometrically increasing
sequence. Then
\begin{equation*}
\left\| \sigma _k \sum_{m \ge k}a_m\right\| _{\ell^q(\Z)} \approx \|
\sigma _ka_k\| _{\ell^q(\Z)}
\end{equation*}
and
\begin{equation*}
\left\| \sigma _k \sup _{m\geq k}a_m \right\| _{\ell^q(\Z)} \approx
\|\sigma _ka_k\| _{\ell^q(\Z)}
\end{equation*}
for all non-negative sequences $\{ a_k\} _{k\in\Z}$.
\end{lem}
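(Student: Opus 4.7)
The plan is to observe that in each of the four inequalities only the $\lesssim$ direction requires proof: the reverse bound is a pointwise consequence of $a_k \le \sup_{m\le k}a_m \le \sum_{m\le k}a_m$ (and the analogue for $m\ge k$). Moreover, since $\sup_{m\le k}a_m \le \sum_{m\le k}a_m$ whenever $a_m\ge 0$, the sup-version of the $\lesssim$ direction follows from the sum-version. Finally, the geometrically increasing statements reduce to the geometrically decreasing ones by the change of index $k \mapsto -k$. Thus it suffices to prove
\[
\Big\|\tau_k\sum_{m\le k}a_m\Big\|_{\ell^q(\Z)} \lesssim \|\tau_k a_k\|_{\ell^q(\Z)}
\]
for an arbitrary geometrically decreasing $\{\tau_k\}$ and $q\in(0,+\infty]$.

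I would then fix $r\in(0,1)$ with $\tau_{k+1}\le r\tau_k$ for every $k\in\Z$, so that $\tau_k\le r^{k-m}\tau_m$ whenever $k\ge m$. This yields the pointwise bound
\[
\tau_k\sum_{m\le k}a_m \;\le\; \sum_{m\le k}r^{k-m}(\tau_m a_m),
\]
which exhibits the left-hand side as a one-sided convolution on $\Z$ of the non-negative sequence $\{\tau_m a_m\}$ against the kernel $K_j:=r^j\chi_{\{j\ge 0\}}$, a kernel with $\|K\|_{\ell^1(\Z)}=1/(1-r)$.

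I would conclude by splitting on the value of $q$. For $q=+\infty$ the bound is immediate: $\tau_k\sum_{m\le k}a_m\le\|K\|_{\ell^1(\Z)}\,\sup_m\tau_m a_m$. For $1\le q<+\infty$ I would invoke the discrete Minkowski inequality (equivalently, Young's inequality for convolutions) so that convolution against $K$ is bounded on $\ell^q(\Z)$ with norm at most $\|K\|_{\ell^1(\Z)}$. For $0<q<1$, where Minkowski fails, I would use the subadditivity $(\sum_m x_m)^q\le\sum_m x_m^q$ and Fubini for series:
\[
\sum_k \tau_k^q\Big(\sum_{m\le k}a_m\Big)^q \;\le\; \sum_m a_m^q\sum_{k\ge m}\tau_k^q \;\lesssim\; \sum_m \tau_m^q a_m^q,
\]
since $\sum_{k\ge m}\tau_k^q\le\tau_m^q\sum_{j\ge 0} r^{qj}=\tau_m^q/(1-r^q)$. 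The only mildly delicate step is precisely this $q<1$ case, where subadditivity replaces Minkowski; every other case reduces to summing a geometric series.
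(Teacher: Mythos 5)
The paper does not prove Lemma~\ref{lem.2.3}: it is quoted as a known result with the reader directed to Leindler \cite{Le1}, \cite{Le2}. So there is no proof in the paper to compare against; I can only assess your argument on its own terms, and it is correct. The preliminary reductions are all sound: the $\gtrsim$ direction is the pointwise bound $\tau_k a_k \le \tau_k\sup_{m\le k}a_m\le \tau_k\sum_{m\le k}a_m$; the sup-version of $\lesssim$ follows from the sum-version; and the re-indexing $k\mapsto -k$ converts a geometrically increasing $\{\sigma_k\}$ into a geometrically decreasing $\{\tau_k\}$ with $\sum_{m\ge k}$ becoming $\sum_{m\le k}$, so only the first estimate needs proof. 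The core inequality
\[
\tau_k\sum_{m\le k}a_m \le \sum_{m\le k}r^{k-m}\tau_m a_m
\]
(with $r=\sup_k\tau_{k+1}/\tau_k<1$) is exactly right, and each of the three regimes is handled correctly: $q=\infty$ by the geometric series bound, $1\le q<\infty$ by Young's inequality with the kernel $K_j=r^j\chi_{\{j\ge 0\}}\in\ell^1(\Z)$, and $0<q<1$ by the subadditivity $(\sum_m x_m)^q\le\sum_m x_m^q$ followed by Tonelli and the geometric sum $\sum_{k\ge m}\tau_k^q\le\tau_m^q/(1-r^q)$. This is essentially the standard discrete Hardy/convolution argument one finds in the literature on power series with monotone coefficients, and there is no gap in it. One very minor stylistic point: in the $q<1$ case you do not actually use the convolution kernel; you apply subadditivity directly to $\tau_k^q(\sum_{m\le k}a_m)^q\le\tau_k^q\sum_{m\le k}a_m^q$, which is even simpler and avoids invoking $r$ twice.
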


We shall use the following inequality, which is a simple consequence
of the discrete H\"{o}lder inequality:
\begin{equation}\label{discrete.Hold.}
\|\{ a_k b_k \}\|_{\ell^q (\Z)} \le \|\{ a_k \}\|_{\ell^{\rho}
(\Z)} \|\{ b_k \}\|_{\ell^p (\Z)},
\end{equation}
where $\frac{1}{\rho} = \left( \frac{1}{q} -
\frac{1}{p}\right)_+$. \footnote{For any $a\in\R$ denote by $a_+ =
a$ when $a>0$ and $a_+ = 0$ when $a \le 0$.}

Given two (quasi-)Banach spaces $X$ and $Y$, we write $X
\hookrightarrow Y$ if $X \subset Y$ and if the natural embedding
of $X$ in $Y$ is continuous.

The following two lemmas are discrete version of the classical
Landau resonance theorems. Proofs can be found, for example, in
\cite{gp1}.
\begin{prop}\label{prop.2.1}{\rm(\cite{gp1}, Proposition 4.1).}
Let $0 < p,\, q \le \i$, and let $\{v_k\}_{k\in\Z}$ and
$\{w_k\}_{k\in\Z}$ be two sequences of positive numbers. Assume
that
\begin{equation}\label{eq31-4651}
\ell^p (\{v_k\},\Z) \hookrightarrow \ell^q (\{w_k\},\Z).
\end{equation}

{\rm (i)} If $0 < p \le q \le \i$, then
\begin{equation*}\label{eq31-46519009}
\|\{w_k v_k^{-1}\}\|_{\ell^{\i}(\Z)} \le C,
\end{equation*}
where $C$ stands for the norm of the inequality \eqref{eq31-4651}.

{\rm (ii)} If $0 < q \le p \le \i$, then
\begin{equation*}\label{eq31-407676519009}
\|\{w_k v_k^{-1}\}\|_{\ell^{r}(\Z)} \le C,
\end{equation*}
where $1/r : = 1/q - 1/p$ and $C$ stands for the norm of the
inequality \eqref{eq31-4651}.
\end{prop}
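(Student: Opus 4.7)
The plan is to test the embedding $\ell^p(\{v_k\},\Z) \hookrightarrow \ell^q(\{w_k\},\Z)$ against well-chosen sequences in order to extract pointwise and summed bounds on the ratios $\lambda_k := w_k/v_k$.

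For part (i), it suffices to use single-coordinate test sequences. Fix $n\in\Z$ and set $a^{(n)}_k := v_n^{-1}\,\delta_{kn}$. Then $\|a^{(n)}\|_{\ell^p(\{v_k\},\Z)} = 1$ and $\|a^{(n)}\|_{\ell^q(\{w_k\},\Z)} = \lambda_n$, so the assumed embedding instantly yields $\lambda_n \le C$ for every $n\in\Z$, i.e., $\|\{\lambda_k\}\|_{\ell^\infty(\Z)} \le C$. The assumption $p\le q$ is not used at the level of this argument; it becomes relevant only if one wishes to check sharpness.

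For part (ii), where $0 < q \le p \le \i$ and $1/r = 1/q - 1/p$, I would use a single family of ``self-tuned'' test sequences. Fix a finite set $F\subset \Z$ and put
$$
a_k := \lambda_k^{r/p}\, v_k^{-1}\, \chi_F(k),
$$
with the convention $\lambda_k^{r/p} := 1$ when $p=\i$. A direct computation, using the identity $rq/p + q = r$ (which is exactly the relation $1/r = 1/q - 1/p$), yields
$$
\|a_k\|_{\ell^p(\{v_k\},\Z)}^p \;=\; \sum_{k\in F}\lambda_k^r \;=\; \|a_k\|_{\ell^q(\{w_k\},\Z)}^q,
$$
with the sum on the left replaced by a supremum when $p=\i$. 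The embedding hypothesis therefore reads
$$
\Bigl(\sum_{k\in F}\lambda_k^r\Bigr)^{1/q} \;\le\; C\, \Bigl(\sum_{k\in F}\lambda_k^r\Bigr)^{1/p},
$$
which rearranges to $\bigl(\sum_{k\in F}\lambda_k^r\bigr)^{1/r}\le C$. Letting $F$ exhaust $\Z$ delivers $\|\{\lambda_k\}\|_{\ell^r(\Z)} \le C$.

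The main (and rather mild) obstacle is that one cannot substitute an infinitely-supported test sequence before knowing that $\sum_k \lambda_k^r$ is finite; that is precisely why the truncation to finite $F$ with a uniform constant, followed by a monotone-convergence limit, is indispensable. Beyond that, the only care needed is a small re-interpretation when $p = \i$ (so the $\ell^p$ norm becomes a supremum) or when $q = p$ (so $r = \i$ and part (ii) collapses to part (i)), both of which are handled seamlessly by the conventions in Convention~\ref{Notat.and.prelim.conv.1.1}.
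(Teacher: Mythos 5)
Your proof is correct, and it is essentially the standard argument (testing with single-coordinate sequences for part (i), and with the ``self-tuned'' finitely supported sequences $a_k=\lambda_k^{r/p}v_k^{-1}\chi_F(k)$ for part (ii), followed by monotone convergence) — which is precisely what the cited proof in \cite{gp1} does. The exponent bookkeeping $rq/p+q=r$ is right, the truncation to finite $F$ correctly avoids assuming finiteness of $\sum\lambda_k^r$ in advance, and the edge cases $p=\infty$ and $q=p$ are handled by the stated conventions.
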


\section{Discretization of Inequalities}\label{mr}

In this section we discretize the inequalities
\begin{equation}\label{eq.4.1}
\begin{split}
\left(\int_0^{\i}\left(\frac{1}{U(t)}\int_0^t\left(\int_s^{\i}h(z)dz\right)^pu(s)ds\right)
^{\frac{q}{p}}w(t)dt\right)^{\frac{1}{q}}\leq c \int_0^{\i}
h(z)v(z)\,dz,
\end{split}
\end{equation}
and
\begin{equation}\label{eq.4.2}
\sup_{t\in\I}w(t)\left(\frac{1}{U(t)}\int_0^t\left(\int_s^{\i}h(z)dz\right)^pu(s)ds\right)
^{\frac{1}{p}} \leq c \int_0^{\i} h(z)v(z)\,dz.
\end{equation}
We start with inequality \eqref{eq.4.1}. At first we do the
following remark.
\begin{rem}
Let $\vp$ be the fundamental function of the measure $w(t)dt$ with
respect to $U^{\frac{q}{p}}$, that is,
\begin{equation}\label{eq.4.3}
\vp(x): =\int_0^{\i}{\mathcal U}(x,s)^{\frac{q}{p}}w(s)ds
\qquad\mbox{for all} \qquad x\in \I,
\end{equation}
where
$$
{\mathcal U}(x,t): = \frac{U(x)}{U(t)+U(x)}.
$$
Assume that  $w(t)dt$ is non-degenerate with respect to
$U^{\frac{q}{p}}$. Then $\vp\in\O_{U^{\frac{q}{p}}}$, and therefore
there exists a discretizing sequence for $\vp$ with respect to
$U^{\frac{q}{p}}$. Let $\{x_k\}$ be one such sequence. Then
$\vp(x_k)\uu$ and $\vp(x_k)U^{-\frac{q}{p}}\dd$. Furthermore, there
is a decomposition $\Z=\Z_1\cup\Z_2$, $\Z_1\cap\Z_2=\emptyset$ such
that for every $k\in\Z_1$ and $t\in[x_k,x_{k+1}]$, $\vp (x_k)
\ap\vp(t)$ and for every $k\in\Z_2$ and $t\in[x_k,x_{k+1}]$,
$\vp(x_k){U(x_k)}^{-\frac{q}{p}}\ap\vp(t){U(t)}^{-\frac{q}{p}}$.
\end{rem}
Next, we state a necessary lemma which is also of independent
interest.
\begin{lem}\label{lem14357861}
Let $0<q<\i$, $0<p<\i$, $1 / \rho = (1 / q - 1)_+$, and let
$u,\,v,\,w$ be weights. Assume that $u$ is such that $U$ is
admissible and the measure $w(t)dt$ is non-degenerate with respect
to $U^{\frac{q}{p}}$. Let $\{x_k\}$ be any discretizing sequence
for $\vp$ defined by \eqref{eq.4.3}. Then inequality
\eqref{eq.4.1} holds  for every $h \in \mp^+(0,\i)$ if and only if
\begin{equation}\label{A}
A : = \left\| \left\{
\frac{\vp(x_k)^{\frac{1}{q}}}{U(x_k)^{\frac{1}{p}}}B(x_{k-1},x_k)\right\}
\right\|_{\ell^{\rho}(\Z)} + \left\| \left\{
\vp(x_k)^{\frac{1}{q}}C(x_k,x_{k+1})\right\}
\right\|_{\ell^{\rho}(\Z)} < \i,
\end{equation}
and the best constant in inequality \eqref{eq.4.1} satisfies
$$
c\ap A.
$$
\end{lem}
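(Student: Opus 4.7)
\emph{Step 1 (Discretization).} Following the discretization/anti-discretization scheme of \cite{gp1,gp2,gjop}, set $g(t):=\int_0^t\bigl(\int_s^{\infty}h(z)\,dz\bigr)^p u(s)\,ds$; then $g$ is non-decreasing and $g/U$ is non-increasing (the latter because $s\mapsto\int_s^\infty h$ is non-increasing), so $g$ is (up to a standard approximation) in $\O_U$. Taking $d\nu=w(t)\,dt$, the function $\vp$ of \eqref{eq.4.3} is exactly the fundamental function of $\nu$ with respect to $U^{q/p}$, and Lemma \ref{lem.2.1} (applied with its ``$q$'' equal to $q/p$) yields
\[
\LHS\eqref{eq.4.1}\ap\bigl\|\vp(x_k)^{1/q}F(x_k)\bigr\|_{\ell^q(\Z)},\qquad F(t):=(g(t)/U(t))^{1/p}.
\]

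\emph{Step 2 (Local decomposition and collapse to the diagonal).} Split $[0,x_k]=\bigsqcup_{j\le k}[x_{j-1},x_j]$ and, for $s\in[x_{j-1},x_j]$, write $\int_s^\infty h=\int_s^{x_j}h+\int_{x_j}^\infty h$. Using $(a+b)^p\ap a^p+b^p$ and $U(x_j)-U(x_{j-1})\ap U(x_j)$ (by the geometric growth of $U(x_k)$), one arrives at
\[
\sum_{k}\vp(x_k)F(x_k)^q\ap\sum_k\frac{\vp(x_k)}{U(x_k)^{q/p}}\bigg(\sum_{j\le k}D_j^p\bigg)^{\!q/p}+\sum_k\frac{\vp(x_k)}{U(x_k)^{q/p}}\bigg(\sum_{j\le k}B_j^pU(x_j)\bigg)^{\!q/p},
\]
where $D_j:=\|\int_s^{x_j}h(z)\,dz\|_{p,u,(x_{j-1},x_j)}$ and $B_j:=\int_{x_j}^\infty h$. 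Viewing each outer sum as an $\ell^{q/p}$-norm and exploiting that $\vp(x_k)/U(x_k)^{q/p}$ is geometrically decreasing, the first clause of Lemma \ref{lem.2.3} collapses every $\sum_{j\le k}$ to its diagonal $j=k$ term. Writing $E_k:=\int_{x_k}^{x_{k+1}}h$ so that $B_k=\sum_{j\ge k}E_j$, and using the geometric increase of $\vp(x_k)^{1/q}$ together with the second clause of Lemma \ref{lem.2.3}, one obtains
\[
\LHS\eqref{eq.4.1}\ap\bigg\|\frac{\vp(x_k)^{1/q}}{U(x_k)^{1/p}}D_k\bigg\|_{\ell^q(\Z)}+\bigl\|\vp(x_k)^{1/q}E_k\bigr\|_{\ell^q(\Z)}.
\]

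\emph{Step 3 (Sufficiency).} Lemma \ref{lem.2.800} gives $D_k\le B(x_{k-1},x_k)\|h\|_{1,v,(x_{k-1},x_k)}$, while \eqref{Ck} gives $E_k\ls C(x_k,x_{k+1})\|h\|_{1,v,(x_k,x_{k+1})}$. The discrete H\"older inequality \eqref{discrete.Hold.} in the form $\|a_kb_k\|_{\ell^q}\le\|a_k\|_{\ell^\rho}\|b_k\|_{\ell^1}$ (with $1/\rho=(1/q-1)_+$), applied to $b_k=\|h\|_{1,v,(x_{k-1},x_k)}$ for which $\sum_k b_k=\|h\|_{1,v,\I}$, yields $\LHS\ls A\|h\|_{1,v,\I}$.

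\emph{Step 4 (Necessity).} For the first summand of $A$, pick an extremizer $h^{(k)}\in\mp^+(x_{k-1},x_k)$ of the Hardy constant $B(x_{k-1},x_k)$ normalized to unit $L^1_v$-norm; testing \eqref{eq.4.1} against $h=\sum_k\mu_kh^{(k)}$ for arbitrary non-negative $\mu_k$ and invoking Step 2 together with the support disjointness produces the continuous embedding $\ell^1(\Z)\hra\ell^q\bigl(\{\vp(x_k)^{1/q}U(x_k)^{-1/p}B(x_{k-1},x_k)\},\Z\bigr)$, whence Proposition \ref{prop.2.1}(i) (if $q\ge 1$) or Proposition \ref{prop.2.1}(ii) (if $q<1$) gives the needed $\ell^\rho$-bound. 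An analogous argument using extremizers supported in $(x_k,x_{k+1})$ handles the second summand, completing the characterization. The principal obstacle is the rearrangement in Step 2: one must marshal both clauses of Lemma \ref{lem.2.3} in sequence, using the geometric monotonicities $\vp(x_k)/U(x_k)^{q/p}\dd$ and $\vp(x_k)\uu$ forced by the discretizing sequence, to be certain that the doubly-indexed off-diagonal contributions truly collapse to the stated diagonal form.
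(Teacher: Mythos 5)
Your proposal is correct and follows essentially the same route as the paper's own proof: discretize the left-hand side via Lemma \ref{lem.2.1} with $d\nu=w\,dt$ and exponent $q/p$, reduce the interval $(0,x_k)$ to $I_k=(x_{k-1},x_k)$ and the tail $\int_{x_k}^\infty h$ to $\int_{x_k}^{x_{k+1}}h$ using both clauses of Lemma \ref{lem.2.3} together with $U(x_k)-U(x_{k-1})\approx U(x_k)$, then obtain sufficiency by applying the local Hardy inequality, \eqref{Ck} and the discrete H\"older inequality \eqref{discrete.Hold.}, and necessity by inserting disjointly supported saturating test functions and invoking Proposition \ref{prop.2.1}. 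The only presentational difference is that you split $\int_s^\infty h$ before collapsing the partial sums over $j\le k$, whereas the paper first collapses $(0,x_k)$ to $I_k$ and then splits; these are equivalent and rely on the same geometric monotonicities of $\vp(x_k)^{1/q}$ and $\vp(x_k)^{1/q}U(x_k)^{-1/p}$.
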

\begin{proof}
By using Lemma \ref{lem.2.1} with
$$
d\nu(t)=w(t)dt \qquad\mbox{and}\qquad
f(t)=\int_0^t\left(\int_s^{\i}h(z)dz\right)^pu(s)ds
$$
we get that
\begin{equation*}\label{eq134-6813=0968}
\lhs \eqref{eq.4.1} \ap
\left\|\left\{\left\|\int_s^{\i}h(z)dz\right\|_{p,u,(0,x_k)}\frac{\vp(x_k)^{\frac{1}{q}}}{U(x_k)^{\frac{1}{p}}}\right\}\right\|_{\ell^q(\Z)}.
\end{equation*}
Moreover, by using Lemma \ref{lem.2.3},
\begin{align*}
\lhs \eqref{eq.4.1}  \ap &
\left\|\left\{\left\|\int_s^{\i}h(z)dz\right\|_{p,u,I_k}\frac{\vp(x_k)^{\frac{1}{q}}}{U(x_k)^{\frac{1}{p}}}\right\}\right\|_{\ell^q(\Z)}
\\
\ap &
\left\|\left\{\left\|\int_s^{x_k}h(z)dz+\int_{x_k}^{\i}h(z)dz\right\|_{p,u,I_k}\frac{\vp(x_k)^{\frac{1}{q}}}{U(x_k)^{\frac{1}{p}}}\right\}\right\|_{\ell^q(\Z)}
\\
\ap &
\left\|\left\{\left\|\int_s^{x_k}h(z)dz\right\|_{p,u,I_k}\frac{\vp(x_k)^{\frac{1}{q}}}{U(x_k)^{\frac{1}{p}}}\right\}\right\|_{\ell^q(\Z)}
\\
&+\left\|\left\{\left\|\int_{x_k}^{\i}h(z)dz\right\|_{p,u,I_k}\frac{\vp(x_k)^{\frac{1}{q}}}{U(x_k)^{\frac{1}{p}}}\right\}\right\|_{\ell^q(\Z)}
\\
\ap
&\left\|\left\{\left\|\int_s^{x_k}h(z)dz\right\|_{p,u,I_k}\frac{\vp(x_k)^{\frac{1}{q}}}{U(x_k)^{\frac{1}{p}}}\right\}\right\|_{\ell^q(\Z)}\\
&+\left\|\left\{\int_{x_k}^{\i}h(z)dz \left\|1\right\|_{p,u,I_k}
\frac{\vp(x_k)^{\frac{1}{q}}}{U(x_k)^{\frac{1}{p}}}\right\}\right\|_{\ell^q(\Z)},
\end{align*}
where $I_k : = (x_{k-1},x_k)$, $k \in \Z$. By now using the fact
that
$$
\|1\|_{p,u,I_k}
=\int_{x_{k-1}}^{x_k}u(s)ds=U(x_k)-U(x_{k-1})\ap U(x_k)
$$
we find that
\begin{align*}
\lhs \eqref{eq.4.1}  \ap &
\left\|\left\{\left\|\int_s^{x_k}h(z)dz\right\|_{p,u,I_k}\frac{\vp(x_k)^{\frac{1}{q}}}{U(x_k)^{\frac{1}{p}}}\right\}\right\|_{\ell^q(\Z)}\\
&+\left\|\left\{\vp(x_k)^{\frac{1}{q}}\int_{x_k}^{\i}h(z)dz
\right\}\right\|_{\ell^q(\Z)}.
\end{align*}
Consequently, by using Lemma \ref{lem.2.3} on the second term,
\begin{gather}
\lhs \eqref{eq.4.1}  \ap
\left\|\left\{\left\|\int_s^{x_k}h(z)dz\right\|_{p,u,I_k}\frac{\vp(x_k)^{\frac{1}{q}}}{U(x_k)^{\frac{1}{p}}}\right\}\right\|_{\ell^q(\Z)} \notag\\
+\left\|\left\{\vp(x_k)^{\frac{1}{q}}\int_{x_k}^{x_{k+1}}h(z)dz
\right\}\right\|_{\ell^q(\Z)}:=I+II. \label{eq.4.4}
\end{gather}
To find a sufficient condition for the validity of inequality
\eqref{eq.4.1}, we apply to $I$ locally (that is, for any $k\in \Z$)
the Hardy-type inequality
\begin{equation}\label{localHardy}
\left\|\int_s^{x_k}h(z)dz\right\|_{p,u,I_k} \le B(x_{k-1},x_k)
\|h\|_{1,v,I_k},\qquad h\in \M^+(I_k).
\end{equation}
Thus, in view of inequality \eqref{discrete.Hold.}, we have that
\begin{align}\label{I}
I  \le \left\|\left\{B(x_{k-1},x_k)
\frac{\vp(x_k)^{\frac{1}{q}}}{U(x_k)^{\frac{1}{p}}}\|h\|_{1,v,I_k}\right\}\right\|_{\ell^q(\Z)}
& \notag\\
& \hspace{-4cm} \le \left\|\left\{B(x_{k-1},x_k)
\frac{\vp(x_k)^{\frac{1}{q}}}{U(x_k)^{\frac{1}{p}}}\right\}\right\|_{\ell^{\rho}(\Z)}
\left\|\{\|h\|_{1,v,I_k}\}\right\|_{\ell^1(\Z)} \nonumber\\
&  \hspace{-4cm} = \left\|\left\{B(x_{k-1},x_k)
\frac{\vp(x_k)^{\frac{1}{q}}}{U(x_k)^{\frac{1}{p}}}\right\}\right\|_{\ell^{\rho}(\Z)}
\|h\|_{1,v,\I}.
\end{align}
For $II$, by inequalities \eqref{Ck} and \eqref{discrete.Hold.},
we get that
\begin{align}
II =
\left\|\left\{\vp(x_k)^{\frac{1}{q}}\int_{x_k}^{x_{k+1}}h(z)dz
\right\}\right\|_{\ell^q(\Z)}\nonumber & \\
& \hspace{-3cm} \leq
\left\|\left\{\vp(x_k)^{\frac{1}{q}}C(x_k,x_{k+1})\|h\|_{1,v,I_{k+1}}
\right\}\right\|_{\ell^q(\Z)} \nonumber\\
& \hspace{-3cm} \leq
\left\|\left\{\vp(x_k)^{\frac{1}{q}}C(x_k,x_{k+1})\right\}
\right\|_{\ell^{\rho}(\Z)}\left\|\left\{\|h\|_{1,v,I_{k+1}}
\right\}\right\|_{\ell^1(\Z)}\nonumber\\
& \hspace{-3cm} =
\left\|\left\{\vp(x_k)^{\frac{1}{q}}C(x_k,x_{k+1})\right\}
\right\|_{\ell^{\rho}(\Z)}\|h\|_{1,v,\I}. \label{II}
\end{align}
Combining \eqref{I} and \eqref{II}, in view of \eqref{eq.4.4}, we
obtain that
\begin{align}
\lhs \eqref{eq.4.1} & \nonumber\\
&\hspace{-1.5cm}\ls \left(\left\|\left\{ B(x_{k-1},x_k)
\frac{\vp(x_k)^{\frac{1}{q}}}{U(x_k)^{\frac{1}{p}}}\right\}\right\|_{\ell^{\rho}(\Z)}
+ \left\|\left\{\vp(x_k)^{\frac{1}{q}}C(x_k,x_{k+1})\right\}
\right\|_{\ell^{\rho}(\Z)}\right)\rhs \eqref{eq.4.1}.
\end{align}
Consequently, \eqref{eq.4.1} holds provided that $A<\i$ and $c\le
A$.

Next we prove that condition \eqref{A} is also necessary for the
validity of inequality \eqref{eq.4.1}. Assume that inequality
\eqref{eq.4.1} holds with $c < \i$. By \eqref{Bk}, there are $h_k
\in \M^+(I_k)$, $k\in \Z$, such that
\begin{equation}\label{hknorm}
\|h_k\|_{1,v,I_k} = 1
\end{equation}
and
\begin{equation}\label{hk saturates}
\frac{1}{2} B(x_{k-1},x_k)\le
\left\|\int_s^{x_k}h_k(z)dz\right\|_{p,u,I_k} \qquad \mbox{for
all}\qquad k\in\Z.
\end{equation}
Define $g_k$, $k\in \Z$, as the extension of $h_k$ by $0$ to the
whole interval $\I$ and put
\begin{equation}\label{g}
g = \sum_{k\in \Z} a_k g_k,
\end{equation}
where $\{a_k\}_{k\in\Z}$ is any sequence of positive numbers. We
obtain that
\begin{align}
\lhs \eqref{eq.4.1} & \gs
\left\|\left\{\left\|\int_s^{x_k}\sum_{m\in \Z} a_m
g_m\right\|_{p,u,I_k}\frac{\vp(x_k)^{\frac{1}{q}}}{U(x_k)^{\frac{1}{p}}}\right\}\right\|_{\ell^q(\Z)}
\nonumber\\
& \gs \left\|\left\{a_k
B(x_{k-1},x_k)\frac{\vp(x_k)^{\frac{1}{q}}}{U(x_k)^{\frac{1}{p}}}\right\}\right\|_{\ell^q(\Z)}.
\label{nec1}
\end{align}
Moreover,
\begin{align}\label{nec2}
\rhs \eqref{eq.4.1}  = c\left\|\sum_{m\in \Z} a_m
g_m\right\|_{1,v,\I} =
c\left\|\left\{a_k\right\}\right\|_{\ell^1(\Z)}.
\end{align}
Therefore, by \eqref{eq.4.1}, \eqref{nec1} and \eqref{nec2}, we
arrive at
\begin{equation}
\left\|\left\{a_k
B(x_{k-1},x_k)\frac{\vp(x_k)^{\frac{1}{q}}}{U(x_k)^{\frac{1}{p}}}\right\}\right\|_{\ell^q(\Z)}
\ls c \left\|\left\{a_k\right\}\right\|_{\ell^1(\Z)},
\end{equation}
and Proposition \ref{prop.2.1} implies that
\begin{equation}\label{eq25-609820586y}
\left\| \left\{
\frac{\vp(x_k)^{\frac{1}{q}}}{U(x_k)^{\frac{1}{p}}}B(x_{k-1},x_k)\right\}
\right\|_{\ell^{\rho}(\Z)}  < c.
\end{equation}

On the other hand, there are $\psi_k \in \M^+(I_k)$, $k\in \Z$,
such that
\begin{equation}\label{eq13487561134551}
\|\psi_k\|_{1,v,I_k} = 1
\end{equation}
and
\begin{equation}\label{eq1349601=41345}
\|\psi_k\|_{1,I_{k+1}} \ge  \frac{1}{2} C(x_k,x_{k+1}) \qquad
\mbox{for all}\qquad k\in\Z.
\end{equation}
Define $f_k$, $k\in \Z$, as the extension of $\psi_k$ by $0$ to
the whole interval $\I$ and put
\begin{equation}\label{f}
f = \sum_{k\in \Z} b_k f_k,
\end{equation}
where $\{b_k\}_{k\in\Z}$ is any sequence of positive numbers. We
obtain that
\begin{align*}
\lhs \eqref{eq.4.1} & \ge
\left\|\left\{\vp(x_k)^{\frac{1}{q}}\int_{x_k}^{x_{k+1}}\sum_{m\in
\Z} b_m f_m \right\}\right\|_{\ell^q(\Z)}
\nonumber\\
& \gs \left\|\left\{b_k
\vp(x_k)^{\frac{1}{q}}C(x_k,x_{k+1})\right\}\right\|_{\ell^q(\Z)}.
\end{align*}
Note that
\begin{align*}
\rhs \eqref{eq.4.1}  = c \left\|\sum_{m\in \Z} b_m
f_m\right\|_{1,v,\I} = c
\left\|\left\{b_k\right\}\right\|_{\ell^1(\Z)}.
\end{align*}
Then, by \eqref{eq.4.1} and previous two inequalities, we have
that
$$
\left\|\left\{b_k
\vp(x_k)^{\frac{1}{q}}C(x_k,x_{k+1})\right\}\right\|_{\ell^q(\Z)}
\ls c \left\|\left\{b_k\right\}\right\|_{\ell^1(\Z)}.
$$
Proposition \ref{prop.2.1} implies that
\begin{equation}\label{eq205689728967}
\left\|\left\{
\vp(x_k)^{\frac{1}{q}}C(x_k,x_{k+1})\right\}\right\|_{\ell^{\rho}(\Z)}
< c.
\end{equation}
Inequalities \eqref{eq25-609820586y} and \eqref{eq205689728967}
prove that $A \ls c$.
\end{proof}

Before we proceed to inequality \eqref{eq.4.2} we make the following
remark.
\begin{rem}
Suppose that $\vp (x) < \i$ for all $x \in (0,\i)$, where $\vp$ is
defined by \eqref{eq.2.0}. Let $\vp$ be non-degenerate with
respect to $U^{\frac{1}{p}}$. Then, by Lemma \ref{cor.2.0},
$\vp\in\O_{U^{\frac{1}{p}}}$, and therefore there exists a
discretizing sequence for $\vp$ with respect to $U^{\frac{1}{p}}$.
Let $\{x_k\}$ be one such sequence. Then $\vp(x_k)\uu$ and
$\vp(x_k)U^{-\frac{1}{p}}\dd$. Furthermore, there is a
decomposition $\Z=\Z_1\cup\Z_2$, $\Z_1\cap\Z_2=\emptyset$ such
that for every $k\in\Z_1$ and $t\in[x_k,x_{k+1}]$, $\vp (x_k)
\ap\vp(t)$ and for every $k\in\Z_2$ and $t\in[x_k,x_{k+1}]$,
$\vp(x_k){U(x_k)}^{-\frac{1}{p}}\ap\vp(t){U(t)}^{-\frac{1}{p}}$.
\end{rem}
The following lemma is proved analogously, and for the sake of
completeness we give the full proof.
\begin{lem}\label{lem1435786143515}
Let $0<p<\i$ and let $u,\,v,\,w$ be weights. Assume that $u$ are
such that $U^{\frac{1}{p}}$ is admissible. Let $\vp$, defined by
\eqref{eq.2.0}, be non-degenerate with respect to
$U^{\frac{1}{p}}$. Let $\{x_k\}$ be any discretizing sequence for
$\vp$. Then inequality \eqref{eq.4.2} holds  for every $h \in
\mp^+(0,\i)$ if and only if
\begin{equation}\label{D}
D : = \left\| \left\{
\frac{\vp(x_k)}{U(x_k)^{\frac{1}{p}}}B(x_{k-1},x_k)\right\}
\right\|_{\ell^{\i}(\Z)} + \left\| \left\{
\vp(x_k)C(x_k,x_{k+1})\right\} \right\|_{\ell^{\i}(\Z)} < \i,
\end{equation}
and the best constant in inequality \eqref{eq.4.2} satisfies $c
\ap D$.
\end{lem}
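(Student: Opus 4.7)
The plan is to mirror the proof of Lemma~\ref{lem14357861} step by step, with the outer $L^q(w,\I)$-norm replaced by the essential supremum. First, I would invoke Lemma~\ref{cor.2.0} to rewrite
$$\lhs\eqref{eq.4.2}\ap\sup_{t\in\I}\vp(t)\left(\frac{F(t)}{U(t)}\right)^{\frac{1}{p}},\qquad F(t):=\int_0^t\left(\int_s^{\i}h(z)\,dz\right)^pu(s)\,ds,$$
so the problem reduces to estimating a sup against the least $U^{1/p}$-quasiconcave majorant $\vp$ of $w$; by hypothesis $\vp\in\O_{U^{1/p}}$, and the discretizing sequence $\{x_k\}$ is at our disposal.

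The main technical step is to discretize this sup. Raise it to the $p$-th power and apply Lemma~\ref{lem.2.2} to the pair $\vp^p\in\O_U$ and $U/F\in\O_U$. Here $\vp^p\in\O_U$ is immediate from $\vp\in\O_{U^{1/p}}$, while $F\in\O_U$ follows from the monotonicity of $s\mapsto\int_s^{\i}h(z)\,dz$, which forces $F(t)/U(t)$ to be a decreasing average of $(\int_s^{\i}h)^p$. One also checks that the same $\{x_k\}$ is a discretizing sequence for $\vp^p$ with respect to $U$. Combining these yields
$$\lhs\eqref{eq.4.2}\ap\sup_{k\in\Z}\frac{\vp(x_k)}{U(x_k)^{\frac{1}{p}}}\left\|\int_s^{\i}h(z)\,dz\right\|_{p,u,(0,x_k)}.$$
From here I would repeat the calculation of Lemma~\ref{lem14357861} verbatim: split $\int_s^{\i}=\int_s^{x_k}+\int_{x_k}^{\i}$ for $s\in I_k$, use $\|1\|_{p,u,I_k}\ap U(x_k)^{1/p}$, and apply the sup-form of Lemma~\ref{lem.2.3} (with geometrically increasing $\vp(x_k)$) to arrive at
$$\lhs\eqref{eq.4.2}\ap\sup_k\frac{\vp(x_k)}{U(x_k)^{\frac{1}{p}}}\left\|\int_s^{x_k}h\right\|_{p,u,I_k}+\sup_k\vp(x_k)\int_{x_k}^{x_{k+1}}h.$$

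Sufficiency ($c\ls D$) follows from the local Hardy inequality \eqref{localHardy} and the constant estimate \eqref{Ck} applied intervalwise, combined with the $\rho=\i$ case of \eqref{discrete.Hold.} (which is the trivial bound $\sup_k|a_kb_k|\le\|a_k\|_{\ell^{\i}}\|b_k\|_{\ell^1}$) and $\sum_k\|h\|_{1,v,I_k}\le\|h\|_{1,v,\I}$. Necessity ($D\ls c$) is obtained as in Lemma~\ref{lem14357861}: choose near-extremizers $h_k$ and $\psi_k$ of $B(x_{k-1},x_k)$ and $C(x_k,x_{k+1})$, form the disjointly supported sums $g=\sum_ka_kg_k$ and $f=\sum_kb_kf_k$, plug into \eqref{eq.4.2}, and extract the two $\ell^{\i}$-bounds via Proposition~\ref{prop.2.1}(i) applied to the embedding $\ell^1(\Z)\hra\ell^{\i}(\{\cdot\},\Z)$ (so $p=1\le q=\i$).

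The main obstacle is the sup-discretization in the second paragraph: unlike the $L^q$-case, where Lemma~\ref{lem.2.1} handles the outer norm directly, here one must first reduce the weighted esssup to an unweighted sup against the quasiconcave majorant (Lemma~\ref{cor.2.0}) and then verify the $U$-quasiconcavity of $F$ so that Lemma~\ref{lem.2.2} applies; once this is in place, everything else is a mechanical translation of the earlier proof with $q=\rho=\i$.
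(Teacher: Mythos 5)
Your proposal is correct and mirrors the paper's own proof: first Lemma~\ref{cor.2.0} to replace the weighted essential supremum by a supremum against the quasiconcave majorant $\vp$, then Lemmas~\ref{lem.2.2} and \ref{lem.2.3} to discretize, then the sufficiency/necessity arguments of Lemma~\ref{lem14357861} carried over with $\rho=\i$ (via \eqref{localHardy}, \eqref{Ck}, the $\rho=\i$ case of \eqref{discrete.Hold.}, and Proposition~\ref{prop.2.1}(i)). The extra care you take --- reparametrizing to $\vp^p\in\O_U$, noting $F/U$ is a decreasing average so $U/F$ is $U$-quasiconcave, and checking that $\{x_k\}$ also discretizes $\vp^p$ with respect to $U$ --- merely spells out the step the paper handles by citing Lemma~\ref{lem.2.2} without comment.
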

\begin{proof}
Using Lemma \ref{cor.2.0}, Lemma \ref{lem.2.2}, Lemma \ref{lem.2.3},
we obtain for the left-hand side of \eqref{eq.4.2} that
\begin{align}
\lhs \eqref{eq.4.2} = &
\sup_{t\in\I}\frac{\vp(t)}{{U(t)}^{\frac{1}{p}}}\left\|\int_s^{\i}h(z)dz\right\|_{p,u,(0,t)}\nonumber \\
\ap &
\left\|\left\{\frac{\vp(x_k)}{{U(x_k)}^{\frac{1}{p}}}\left\|\int_s^{\i}h(z)dz\right\|_{p,u,(0,x_k)}\right\}\right\|_{\ell^{\i}(\Z)}\nonumber\\
\ap &
\left\|\left\{\frac{\vp(x_k)}{{U(x_k)}^{\frac{1}{p}}}\left\|\int_s^{\i}h(z)dz\right\|_{p,u,I_k}\right\}\right\|_{\ell^{\i}(\Z)}\nonumber\\
\ap &
\left\|\left\{\frac{\vp(x_k)}{{U(x_k)}^{\frac{1}{p}}}\left\|\int_s^{x_k}h(z)dz\right\|_{p,u,I_k}\right\}\right\|_{\ell^{\i}(\Z)}\nonumber\\
&+\left\|\left\{\vp(x_k)\int_{x_k}^{x_{k+1}}h(z)dz\right\}\right\|_{\ell^{\i}(\Z)}
: =III+IV. \label{eq.4.12}
\end{align}
To find a sufficient condition for the validity of inequality
\eqref{eq.4.2}, we apply to $III$ locally the Hardy-type inequality
\eqref{localHardy}. Thus
\begin{align}
III  \le \left\|\left\{B(x_{k-1},x_k)
\frac{\vp(x_k)}{U(x_k)^{\frac{1}{p}}}\|h\|_{1,v,I_k}\right\}\right\|_{\ell^{\i}(\Z)}
& \nonumber\\
& \hspace{-4cm} \le \left\|\left\{B(x_{k-1},x_k)
\frac{\vp(x_k)}{U(x_k)^{\frac{1}{p}}}\right\}\right\|_{\ell^{\i}(\Z)}
\left\|\{\|h\|_{1,v,I_k}\}\right\|_{\ell^1(\Z)} \nonumber\\
&  \hspace{-4cm} = \left\|\left\{B(x_{k-1},x_k)
\frac{\vp(x_k)}{U(x_k)^{\frac{1}{p}}}\right\}\right\|_{\ell^{\i}(\Z)}
\|h\|_{1,v,\I}. \label{III}
\end{align}
For $IV$ we have that
\begin{align}
IV =\left\|\left\{\vp(x_k)\int_{x_k}^{x_{k+1}}h(z)dz
\right\}\right\|_{\ell^{\i}(\Z)}\nonumber & \\
& \hspace{-4cm} \leq
\left\|\left\{\vp(x_k)C(x_k,x_{k+1})\|h\|_{1,v,I_{k+1}}
\right\}\right\|_{\ell^{\i}(\Z)} \nonumber\\
& \hspace{-4cm} \leq \left\|\left\{\vp(x_k)C(x_k,x_{k+1})\right\}
\right\|_{\ell^{\i}(\Z)}\left\|\left\{\|h\|_{1,v,I_{k+1}}
\right\}\right\|_{\ell^1(\Z)}\nonumber\\
& \hspace{-4cm} = \left\|\left\{\vp(x_k)C(x_k,x_{k+1})\right\}
\right\|_{\ell^{\i}(\Z)}\|h\|_{1,v,\I}. \label{IV}
\end{align}
Combining \eqref{III} and \eqref{IV}, in view of \eqref{eq.4.12},
we get that
\begin{align*}
\lhs \eqref{eq.4.2} & \nonumber\\
&\hspace{-1cm}\ls \left(\left\|\left\{ B(x_{k-1},x_k)
\frac{\vp(x_k)^{\frac{1}{q}}}{U(x_k)^{\frac{1}{p}}}\right\}\right\|_{\ell^{\i}(\Z)}
+ \left\|\left\{\vp(x_k)^{\frac{1}{q}}C(x_k,x_{k+1})\right\}
\right\|_{\ell^{\i}(\Z)}\right)\rhs \eqref{eq.4.2}.
\end{align*}
Consequently, inequality \eqref{eq.4.2} holds provided that
$D<\i$, and $c\ls D$.

Next we prove that condition \eqref{D} is also necessary for the
validity of inequality \eqref{eq.4.2}. Assume that inequality
\eqref{eq.4.2} holds with $c < \i$. By \eqref{hknorm}, \eqref{hk
saturates} and \eqref{g}, we obtain that
\begin{align}
\lhs \eqref{eq.4.2} & \gs
\left\|\left\{\left\|\int_s^{x_k}\sum_{m\in \Z} a_m
g_m\right\|_{p,u,I_k}\frac{\vp(x_k)}{U(x_k)^{\frac{1}{p}}}\right\}\right\|_{\ell^{\i}(\Z)}
\nonumber\\
& \gs \left\|\left\{a_k
B(x_{k-1},x_k)\frac{\vp(x_k)}{U(x_k)^{\frac{1}{p}}}\right\}\right\|_{\ell^{\i}(\Z)}.
\label{nec100}
\end{align}
Moreover,
\begin{align}\label{nec200}
\rhs \eqref{eq.4.2}  = c \left\|\sum_{m\in \Z} a_m
g_m\right\|_{1,v,\I} = c
\left\|\left\{a_k\right\}\right\|_{\ell^1(\Z)}.
\end{align}
Therefore, by \eqref{eq.4.2}, \eqref{nec100} and \eqref{nec200},
\begin{equation}
\left\|\left\{a_k
B(x_{k-1},x_k)\frac{\vp(x_k)}{U(x_k)^{\frac{1}{p}}}\right\}\right\|_{\ell^{\i}(\Z)}
\ls c \left\|\left\{a_k\right\}\right\|_{\ell^1(\Z)},
\end{equation}
and Proposition \ref{prop.2.1} implies that
\begin{equation}\label{eq256-8209682=4586}
\left\| \left\{
\frac{\vp(x_k)}{U(x_k)^{\frac{1}{p}}}B(x_{k-1},x_k)\right\}
\right\|_{\ell^{\i}(\Z)}  \ls  c.
\end{equation}

On the other hand, accordingly to \eqref{eq13487561134551},
\eqref{eq1349601=41345} and \eqref{f}, we obtain that
\begin{align*}
\lhs \eqref{eq.4.2} & \gs
\left\|\left\{\vp(x_k)\int_{x_k}^{x_{k+1}}\sum_{m\in \Z} b_m f_m
\right\}\right\|_{\ell^{\i}(\Z)} \gs \left\|\left\{b_k
\vp(x_k)C(x_k,x_{k+1})\right\}\right\|_{\ell^{\i}(\Z)}.
\end{align*}
Since,
\begin{align*}
\rhs \eqref{eq.4.2} = c \left\|\sum_{m\in \Z} b_m
f_m\right\|_{1,v,\I} = c
\left\|\left\{b_k\right\}\right\|_{\ell^1(\Z)},
\end{align*}
in view of \eqref{eq.4.2} and previous two inequalities, we have
that
$$
\left\|\left\{b_k
\vp(x_k)C(x_k,x_{k+1})\right\}\right\|_{\ell^{\i}(\Z)} \ls c
\left\|\left\{b_k\right\}\right\|_{\ell^1(\Z)}.
$$
Proposition \ref{prop.2.1} implies that
\begin{equation}\label{eq4096582=9667}
\left\|\left\{
\vp(x_k)C(x_k,x_{k+1})\right\}\right\|_{\ell^{\i}(\Z)} \ls c.
\end{equation}
Finally, inequalities \eqref{eq256-8209682=4586} and
\eqref{eq4096582=9667} imply that $D \ls c$.
\end{proof}

\begin{rem}\label{remconv}
In view of \eqref{Ck} and Lemma \ref{lem.2.3}, it is evident now
that
$$
\left\|\left\{\vp(x_k)^{\frac{1}{q}}C(x_k,x_{k+1})\right\}
\right\|_{\ell^{\rho}(\Z)} \ap \left\| \left\{
\vp(x_k)^{\frac{1}{q}} \underline{v}(x_k,x_{k+1})\right\}
\right\|_{\ell^{\rho}(\Z)} \ap \left\| \left\{
\vp(x_k)^{\frac{1}{q}} \underline{v}(x_k,\i)\right\}
\right\|_{\ell^{\rho}(\Z)}.
$$
Monotonicity of $\underline{v}(t,\i)$ implies that
\begin{align*}
\left\| \left\{ \vp(x_k)^{\frac{1}{q}}
\underline{v}(x_k,x_{k+1})\right\} \right\|_{\ell^{\rho}(\Z)}  \ge
\left\| \left\{ \vp(x_k)^{\frac{1}{q}} \right\}
\right\|_{\ell^{\rho}(\Z)} \lim_{t\rightarrow
\i}\underline{v}(t,\i).
\end{align*}
Since $\left\{ \vp(x_k)^{\frac{1}{q}} \right\}$ is geometrically
increasing, we obtain that
$$
\left\| \left\{ \vp(x_k)^{\frac{1}{q}}
\underline{v}(x_k,x_{k+1})\right\} \right\|_{\ell^{\rho}(\Z)}  \ge
\vp(\i)^{\frac{1}{q}} \lim_{t\rightarrow \i}\underline{v}(t,\i).
$$
This inequality shows that $\lim_{t\rightarrow
\i}\underline{v}(t,\i)$ must be equal to $0$, because $\vp(\i)$  is
always equal to $\i$ by our assumptions on the function $\vp$.
Therefore, in the remaining part of the paper we consider weight
functions $v$ such that
$$
\lim_{t\rightarrow \i}\underline{v}(t,\i) = 0.
$$
\end{rem}

\section{\bf Anti-dicretization of Conditions}\label{Antidiscretization}

In this section we anti-discretize the conditions obtained in Lemmas
\ref{lem14357861} and \ref{lem1435786143515}. We distinguish several
cases.

\

\noindent{\bf The case $0< p<1$, $0 < q < \i$.}  We need the
following lemma.
\begin{lem}\label{lem3.3}
Let $0<q<\i$, $0<p<1$, $1 / \rho = (1 / q - 1)_+$, and let
$u,\,v,\,w$ be weights. Assume that $u$ be such that $U$ is
admissible and the measure $w(t)dt$ is non-degenerate with respect
to $U^{\frac{q}{p}}$. Let $\{x_k\}$ be any discretizing sequence for
$\vp$ defined by \eqref{eq.4.3}. Then
$$
A  \ap A_1,
$$
where
\begin{equation*}
A_1 : =  \left\| \left\{
\frac{\vp(x_k)^{\frac{1}{q}}}{U(x_k)^{\frac{1}{p}}} \left(
\int_{x_{k-1}}^{x_k} \left( \int_{x_{k-1}}^t u(s)\,ds\right)^{p^*}
u(t) \underline{v}(t,\i)^{p^*} \,dt \right)^{\frac{1}{p^*}}\right\}
\right\|_{\ell^{\rho}(\Z)}.
\end{equation*}
\end{lem}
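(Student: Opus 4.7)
Since $0<p<1$, part (b) of Lemma~\ref{lem.2.800} gives the closed form
$$B(x_{k-1},x_k)\ap\left(\int_{x_{k-1}}^{x_k}\Bigl(\int_{x_{k-1}}^t u(s)\,ds\Bigr)^{p^*}u(t)\,\underline v(t,x_k)^{p^*}\,dt\right)^{1/p^*}.$$
Thus the first summand of $A$ has the same shape as the single summand of $A_1$; the only difference is that the \emph{local} factor $\underline v(t,x_k)$ appears in $A$, while the \emph{global} factor $\underline v(t,\infty)$ appears in $A_1$. The proof therefore reduces to controlling this tail by the $C$-term of $A$, and conversely.

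I would begin with the direction $A\ls A_1$. The pointwise monotonicity $\underline v(t,x_k)\le\underline v(t,\infty)$ immediately bounds the $B$-summand of $A$ by $A_1$. For the $C$-summand, Remark~\ref{remconv} gives
$$\|\vp(x_k)^{1/q}C(x_k,x_{k+1})\|_{\ell^\rho(\Z)}\ap \|\vp(x_k)^{1/q}\underline v(x_k,\infty)\|_{\ell^\rho(\Z)}.$$
To absorb the right-hand side into $A_1$, I would use the pointwise lower bound $\underline v(t,\infty)\ge\underline v(x_k,\infty)$ for $t\in(x_{k-1},x_k)$ together with the exact evaluation
$$\int_{x_{k-1}}^{x_k}\Bigl(\int_{x_{k-1}}^t u\Bigr)^{p^*}u(t)\,dt=\frac{(U(x_k)-U(x_{k-1}))^{p^*+1}}{p^*+1}\ap U(x_k)^{p^*+1},$$
whose last equivalence uses $U(x_k)\uu$. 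Since $(p^*+1)/p^*=1/p$, the $1/p^*$-power of this integral, once multiplied by $\vp(x_k)^{1/q}/U(x_k)^{1/p}$, is $\ap \vp(x_k)^{1/q}$. So the $k$-th term of $A_1$ dominates a constant times $\vp(x_k)^{1/q}\underline v(x_k,\infty)$, and taking $\ell^\rho(\Z)$-norms finishes this direction.

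For the converse $A_1\ls A$, the key observation is the decomposition
$$\underline v(t,\infty)=\max\{\underline v(t,x_k),\,\underline v(x_k,\infty)\}\qquad\text{for }t\in(x_{k-1},x_k),$$
which yields $\underline v(t,\infty)^{p^*}\le \underline v(t,x_k)^{p^*}+\underline v(x_k,\infty)^{p^*}$. Substituting into $A_1$ and using linearity of the integral splits the $k$-th term into a piece equivalent (by Lemma~\ref{lem.2.800}(b) again) to $B(x_{k-1},x_k)^{p^*}$ and a piece equal to $\underline v(x_k,\infty)^{p^*}\int_{x_{k-1}}^{x_k}(\int_{x_{k-1}}^t u)^{p^*}u$, which by the computation above is $\ap \underline v(x_k,\infty)^{p^*}U(x_k)^{p^*+1}$. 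Taking the $1/p^*$-power (equivalent up to a constant to splitting as a sum), multiplying by $\vp(x_k)^{1/q}/U(x_k)^{1/p}$, and then taking $\ell^\rho(\Z)$-norms, produces exactly the two summands of $A$, after one more application of Remark~\ref{remconv} to pass from $\underline v(x_k,\infty)$ back to $C(x_k,x_{k+1})$.

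The only delicate point is the exponent arithmetic $1+1/p^*=1/p$, which ensures the exact cancellation between the $U(x_k)^{1/p}$ produced by the integral and the $U(x_k)^{-1/p}$ in the prefactor $\vp(x_k)^{1/q}/U(x_k)^{1/p}$; everything else is routine $\ell^\rho$ bookkeeping together with the elementary equivalence $(a+b)^{1/p^*}\ap a^{1/p^*}+b^{1/p^*}$ for $a,b\ge 0$.
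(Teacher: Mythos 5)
Your proof is correct and takes essentially the same route as the paper: both directions rely on Lemma~\ref{lem.2.800}(b), the exact evaluation $\int_{x_{k-1}}^{x_k}\bigl(\int_{x_{k-1}}^{t}u\bigr)^{p^*}u\,dt \ap \bigl(U(x_k)-U(x_{k-1})\bigr)^{p^*+1}\ap U(x_k)^{p^*/p}$, the exponent identity $1+1/p^*=1/p$, the pointwise split $\underline v(t,\infty)^{p^*}\ap\underline v(t,x_k)^{p^*}+\underline v(x_k,\infty)^{p^*}$ on $(x_{k-1},x_k)$, and Remark~\ref{remconv} to pass between $\underline v(x_k,x_{k+1})$ and $\underline v(x_k,\infty)$. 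The paper presents the direction $A\ls A_1$ slightly more compactly (it bounds both summands of $A$ by $A_1$ in one display without invoking the max decomposition explicitly), but the content is identical.
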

\begin{proof}
By Lemma \ref{lem.2.800}, in this case it yields that
\begin{equation*}
B(x_{k-1},x_k) \ap \left(\int_{x_{k-1}}^{x_k} \left(\int_{x_{k-1}}^t
u(s)\,ds\right)^{p^*} u(t)\underline{v}(t,x_k)^{p^*}\,dt \right)
^{\frac{1}{{p^*}}}.
\end{equation*}
Therefore, in view of \eqref{Ck}, Lemma \ref{lem14357861}, we have
that
\begin{align*}
A  \ap & \left\| \left\{
\frac{\vp(x_k)^{\frac{1}{q}}}{U(x_k)^{\frac{1}{p}}} \left(
\int_{x_{k-1}}^{x_k} \left( \int_{x_{k-1}}^t u(s)\,ds\right)^{p^*}
u(t) \underline{v}(t,x_k)^{p^*} \,dt
\right)^{\frac{1}{{p^*}}}\right\}
\right\|_{\ell^{\rho}(\Z)} \notag\\
& + \left\|\left\{ \vp(x_k)^{\frac{1}{q}}
\underline{v}(x_k,x_{k+1})\right\}\right\|_{\ell^{\rho}(\Z)}.
\end{align*}
It is easy to see that
\begin{align*}
A_1 \ls & \left\| \left\{
\frac{\vp(x_k)^{\frac{1}{q}}}{U(x_k)^{\frac{1}{p}}} \left(
\int_{x_{k-1}}^{x_k} \left( \int_{x_{k-1}}^t u(s)\,ds\right)^{p^*}
u(t) \underline{v}(t,x_k)^{p^*}\,dt
\right)^{\frac{1}{{p^*}}}\right\}
\right\|_{\ell^{\rho}(\Z)} \notag \\
& + \left\| \left\{
\frac{\vp(x_k)^{\frac{1}{q}}}{U(x_k)^{\frac{1}{p}}}
\underline{v}(x_k,\i)\left( \int_{x_{k-1}}^{x_k} \left(
\int_{x_{k-1}}^t u(s)\,ds\right)^{p^*} u(t) \,dt
\right)^{\frac{1}{{p^*}}}\right\}
\right\|_{\ell^{\rho}(\Z)} \notag \\
= & \left\| \left\{
\frac{\vp(x_k)^{\frac{1}{q}}}{U(x_k)^{\frac{1}{p}}} \left(
\int_{x_{k-1}}^{x_k} \left( \int_{x_{k-1}}^t u(s)\,ds\right)^{p^*}
u(t) \underline{v}(t,x_k)^{p^*} \,dt
\right)^{\frac{1}{{p^*}}}\right\}
\right\|_{\ell^{\rho}(\Z)} \notag \\
& + \left\| \left\{
\frac{\vp(x_k)^{\frac{1}{q}}}{U(x_k)^{\frac{1}{p}}}
\underline{v}(x_k,\i)\left( \int_{x_{k-1}}^{x_k}  u(t) \,dt
\right)^{\frac{1}{p}}\right\}
\right\|_{\ell^{\rho}(\Z)} \notag \\
\ap & \left\| \left\{
\frac{\vp(x_k)^{\frac{1}{q}}}{U(x_k)^{\frac{1}{p}}} \left(
\int_{x_{k-1}}^{x_k} \left( \int_{x_{k-1}}^t u(s)\,ds\right)^{p^*}
u(t) \underline{v}(t,x_k)^{p^*} \,dt
\right)^{\frac{1}{{p^*}}}\right\}
\right\|_{\ell^{\rho}(\Z)} \notag \\
& + \left\| \left\{ \vp(x_k)^{\frac{1}{q}}
\underline{v}(x_k,\i)\right\} \right\|_{\ell^{\rho}(\Z)} \\
\ap & \left\| \left\{
\frac{\vp(x_k)^{\frac{1}{q}}}{U(x_k)^{\frac{1}{p}}} \left(
\int_{x_{k-1}}^{x_k} \left( \int_{x_{k-1}}^t u(s)\,ds\right)^{p^*}
u(t) \underline{v}(t,x_k)^{p^*} \,dt
\right)^{\frac{1}{{p^*}}}\right\}
\right\|_{\ell^{\rho}(\Z)} \notag \\
& + \left\| \left\{ \vp(x_k)^{\frac{1}{q}}
\underline{v}(x_k,x_{k+1})\right\} \right\|_{\ell^{\rho}(\Z)} \ap A.
\end{align*}

On the other hand,
\begin{align*}
A \ap & \left\| \left\{
\frac{\vp(x_k)^{\frac{1}{q}}}{U(x_k)^{\frac{1}{p}}} \left(
\int_{x_{k-1}}^{x_k} \left( \int_{x_{k-1}}^t u(s)\,ds\right)^{p^*}
u(t) \underline{v}(t,x_k)^{p^*} \,dt
\right)^{\frac{1}{{p^*}}}\right\}
\right\|_{\ell^{\rho}(\Z)} \notag \\
& + \left\| \left\{ \vp(x_k)^{\frac{1}{q}}
\underline{v}(x_k,x_{k+1})\right\} \right\|_{\ell^{\rho}(\Z)} \notag
\\
\ap &  \left\| \left\{
\frac{\vp(x_k)^{\frac{1}{q}}}{U(x_k)^{\frac{1}{p}}} \left(
\int_{x_{k-1}}^{x_k} \left( \int_{x_{k-1}}^t u(s)\,ds\right)^{p^*}
u(t) \underline{v}(t,x_k)^{p^*} \,dt \right)^{\frac{1}{p^*}}\right\}
\right\|_{\ell^{\rho}(\Z)} \notag \\
& + \left\| \left\{
\frac{\vp(x_k)^{\frac{1}{q}}}{U(x_k)^{\frac{1}{p}}}\underline{v}(x_k,x_{k+1})\left(
\int_{x_{k-1}}^{x_k} \left( \int_{x_{k-1}}^t u(s)\,ds\right)^{p^*}
u(t)
\,dt \right)^{\frac{1}{p^*}} \right\} \right\|_{\ell^{\rho}(\Z)} \notag \\
\ls & \left\| \left\{
\frac{\vp(x_k)^{\frac{1}{q}}}{U(x_k)^{\frac{1}{p}}} \left(
\int_{x_{k-1}}^{x_k} \left( \int_{x_{k-1}}^t u(s)\,ds\right)^{p^*}
u(t) \underline{v}(t,\i)^{p^*} \,dt \right)^{\frac{1}{p^*}}\right\}
\right\|_{\ell^{\rho}(\Z)} = A_1.
\end{align*}
\end{proof}

\begin{lem}\label{lem3.4}
Assume that the conditions of Lemma \ref{lem3.3} are fulfilled. Then
$$
A_1 \ap A_2,
$$
where
\begin{equation*}
A_2 : = \left\| \left\{
\frac{\vp(x_k)^{\frac{1}{q}}}{U(x_k)^{\frac{1}{p}}} \left(
\int_{x_{k-1}}^{x_k} U(t)^{p^*} u(t) \underline{v}(t,\i)^{p^*} \,dt
\right)^{\frac{1}{p^*}}\right\} \right\|_{\ell^{\rho}(\Z)}.
\end{equation*}
\end{lem}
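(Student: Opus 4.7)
The plan is to prove the two inequalities $A_1 \le A_2$ and $A_2 \ls A_1$ separately. The first is immediate: on $(x_{k-1}, x_k)$ one has $\int_{x_{k-1}}^t u(s)\,ds = U(t) - U(x_{k-1}) \le U(t)$ pointwise, so $(U(t)-U(x_{k-1}))^{p^*} \le U(t)^{p^*}$; multiplying by $u(t)\,\underline{v}(t,\i)^{p^*}$ and integrating preserves the inequality, so each term of the sum defining $A_1$ is dominated by the corresponding term of $A_2$, and hence $A_1 \le A_2$ in the $\ell^{\rho}$-norm.

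For $A_2 \ls A_1$ my starting point is the elementary pointwise bound
$U(t)^{p^*} = (U(x_{k-1})+(U(t)-U(x_{k-1})))^{p^*} \ls U(x_{k-1})^{p^*} + (U(t)-U(x_{k-1}))^{p^*}$,
valid for any $p^*>0$. Integrating against $u(t)\,\underline{v}(t,\i)^{p^*}$ over $(x_{k-1},x_k)$, the second summand produces precisely the $A_1$-integrand. For the first summand, since $\underline{v}(\cdot,\i)$ is non-increasing and $U(x_k)\uu$ (so that $U(x_k)-U(x_{k-1})\ap U(x_k)$), one estimates $U(x_{k-1})^{p^*}\int_{x_{k-1}}^{x_k} u(t)\underline{v}(t,\i)^{p^*}\,dt \le U(x_{k-1})^{p^*}\underline{v}(x_{k-1},\i)^{p^*}U(x_k)$. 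Taking $p^*$-th roots, multiplying by $\vp(x_k)^{1/q}/U(x_k)^{1/p}$, and using the identity $1/p-1/p^*=1$, the extra contribution to the $A_2$-sum at index $k$ reduces to $\vp(x_k)^{1/q}\underline{v}(x_{k-1},\i)\cdot U(x_{k-1})/U(x_k)$, which is itself bounded by $\vp(x_k)^{1/q}\underline{v}(x_{k-1},\i)$ since $U(x_{k-1})/U(x_k)$ is uniformly less than $1$ by $U(x_k)\uu$.

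The remaining and most delicate step is to show $\|\vp(x_k)^{1/q}\underline{v}(x_{k-1},\i)\|_{\ell^{\rho}(\Z)}\ls A_1$. I would split $\underline{v}(x_{k-1},\i) \le \underline{v}(x_{k-1},x_k)+\underline{v}(x_k,\i)$. The $\underline{v}(x_k,\i)$ piece is handled by the lower bound obtained from bounding $\underline{v}(t,\i)\ge \underline{v}(x_k,\i)$ inside the integral defining the $A_1$-term at $k$ and computing $\int_{x_{k-1}}^{x_k}(U(t)-U(x_{k-1}))^{p^*}u(t)\,dt \ap U(x_k)^{p^*+1}$, which yields $\|\vp(x_k)^{1/q}\underline{v}(x_k,\i)\|_{\ell^{\rho}(\Z)} \ls A_1$. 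The $\underline{v}(x_{k-1},x_k)$ piece is treated via Lemma~\ref{lem.2.3}: writing $\underline{v}(x_{k-1},\i)=\sup_{j\ge k}\underline{v}(x_{j-1},x_j)$ and using that $\{\vp(x_k)^{1/q}\}$ is geometrically increasing, the supremum norm collapses to the diagonal; the geometric decrease of $\{\vp(x_k)^{1/q}/U(x_k)^{1/p}\}$ combined with the uniform bound $U(x_{k-1})/U(x_k)\le 1/\alpha_0$ then permits the index-shift needed to reabsorb the resulting norm into an earlier $A_1$-term. The main obstacle is precisely this final bookkeeping: ensuring the absorption works uniformly in $k$ despite possibly unbounded ratios $U(x_k)/U(x_{k-1})$, which requires simultaneously invoking properties (i) and (ii) of Definition~\ref{def.2.3} and the sharper relation $(\vp(x_k)/\vp(x_{k-1}))^{1/q} \ls (U(x_k)/U(x_{k-1}))^{1/p}$ derived from the geometric decrease of $\vp(x_k)/U(x_k)^{q/p}$.
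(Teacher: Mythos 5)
The easy direction $A_1 \le A_2$ is fine. But the argument for $A_2 \ls A_1$ has a genuine gap: the pointwise split $U(t)^{p^*} \ls U(x_{k-1})^{p^*} + (U(t)-U(x_{k-1}))^{p^*}$ is taken at the wrong exponent. After multiplying by $u(t)\,\underline{v}(t,\i)^{p^*}$, integrating, taking the $1/p^*$-th root and attaching the weight $\vp(x_k)^{1/q}/U(x_k)^{1/p}$, the error contribution at index $k$ is, as you compute, $\vp(x_k)^{1/q}\,\underline{v}(x_{k-1},\i)\,U(x_{k-1})/U(x_k)$. The only available reduction comes from the geometric decrease of $\vp(x_k)^{1/q}/U(x_k)^{1/p}$, which yields $\vp(x_k)^{1/q}\bigl(U(x_{k-1})/U(x_k)\bigr)^{1/p}\ls\vp(x_{k-1})^{1/q}$ — but you only have the ratio to the first power. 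Since $0<p<1$, one has $\bigl(U(x_{k-1})/U(x_k)\bigr)^{1}\gg\bigl(U(x_{k-1})/U(x_k)\bigr)^{1/p}$, and writing the term as $\vp(x_{k-1})^{1/q}\underline{v}(x_{k-1},\i)\bigl(U(x_k)/U(x_{k-1})\bigr)^{1/p-1}$ exposes an excess factor $\bigl(U(x_k)/U(x_{k-1})\bigr)^{1/p-1}$ that a discretizing sequence gives no upper bound on. Dropping $U(x_{k-1})/U(x_k)\le 1$ and aiming instead at $\bigl\|\{\vp(x_k)^{1/q}\underline{v}(x_{k-1},\i)\}\bigr\|_{\ell^\rho}\ls A_1$ does not help: this equals $\bigl\|\{\vp(x_{k+1})^{1/q}\underline{v}(x_k,\i)\}\bigr\|_{\ell^\rho}$, which is generically incomparable to the controllable quantity $\bigl\|\{\vp(x_k)^{1/q}\underline{v}(x_k,\i)\}\bigr\|_{\ell^\rho}\ls A_1$ because $\vp(x_{k+1})/\vp(x_k)$ is bounded below but not above. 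Your Lemma \ref{lem.2.3} reduction to $\bigl\|\{\vp(x_k)^{1/q}\underline{v}(x_{k-1},x_k)\}\bigr\|_{\ell^\rho}$ hits the same wall: $\underline{v}(x_{k-1},x_k)$ may be attained arbitrarily close to $x_{k-1}$, where $\int_{x_{k-1}}^{t}u$ vanishes, so the $A_1$-integral at index $k$ cannot dominate it, while dominating it by the $A_1$-term at index $k-1$ would require $\vp(x_k)\ls\vp(x_{k-1})$, which is false.

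The paper avoids this by rewriting $U(t)^{p^*}u(t)\,dt\approx d\bigl(U(t)^{p^*/p}\bigr)$ (note $p^*/p=p^*+1$), integrating by parts so that $\underline{v}$ is differentiated rather than integrated, and only then splitting $U(t)^{p^*/p}\ls\bigl(\int_{x_{k-1}}^{t}u\bigr)^{p^*/p}+U(x_{k-1})^{p^*/p}$. The constant piece now carries the power $U(x_{k-1})^{p^*/p}$, whose $1/p^*$-th root is $U(x_{k-1})^{1/p}$, and the quasi-concavity property $\vp(x_k)^{1/q}\bigl(U(x_{k-1})/U(x_k)\bigr)^{1/p}\ls\vp(x_{k-1})^{1/q}$ applies exactly; the remaining boundary terms collapse to $\bigl\|\{\vp(x_k)^{1/q}\underline{v}(x_k-,\i)\}\bigr\|_{\ell^\rho}$, which is absorbed into $A_1$ via \eqref{eq2-568259682=}. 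The integration-by-parts step (splitting $U^{p^*/p}$, not $U^{p^*}$) is what makes the absorption work, and a direct pointwise split of $U^{p^*}$ cannot replace it.
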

\begin{proof}
Evidently, $A_1 \le A_2$. Using integrating by parts formula
\eqref{integration.by.parts}, we have that
\begin{align*}
A_2 \ap & \left\| \left\{
\frac{\vp(x_k)^{\frac{1}{q}}}{U(x_k)^{\frac{1}{p}}} \left(
\int_{[x_{k-1},x_k)} \underline{v}(t,\i)^{p^*}
d\left(U(t)^{\frac{p^*}{p}} \right) \right)^{\frac{1}{p^*}}\right\}
\right\|_{\ell^{\rho}(\Z)} \notag \\
\le & \left\| \left\{
\frac{\vp(x_k)^{\frac{1}{q}}}{U(x_k)^{\frac{1}{p}}} \left(
\int_{[x_{k-1},x_k)} U(t)^{\frac{p^*}{p}} d\left( -
\underline{v}(t-,\i)^{p^*} \right) \right)^{\frac{1}{p^*}}\right\}
\right\|_{\ell^{\rho}(\Z)} \notag \\
& + \left\| \left\{ \vp(x_k)^{\frac{1}{q}}
\underline{v}(x_k-,\i)\right\}
\right\|_{\ell^{\rho}(\Z)} \notag \\
\le & \left\| \left\{
\frac{\vp(x_k)^{\frac{1}{q}}}{U(x_k)^{\frac{1}{p}}} \left(
\int_{[x_{k-1},x_k)} \left( \int_{x_{k-1}}^t
u(s)\,ds\right)^{\frac{p^*}{p}} d\left( - \underline{v}(t-,\i)^{p^*}
\right)\right)^{\frac{1}{p^*}}\right\}
\right\|_{\ell^{\rho}(\Z)} \notag \\
& + \left\| \left\{
\frac{\vp(x_k)^{\frac{1}{q}}}{U(x_k)^{\frac{1}{p}}}
U(x_{k-1})^{\frac{1}{p}}\left( \int_{[x_{k-1},x_k)}  d\left( -
\underline{v}(t-,\i)^{p^*} \right) \right)^{\frac{1}{p^*}}\right\}
\right\|_{\ell^{\rho}(\Z)} \notag \\
& + \left\| \left\{ \vp(x_k)^{\frac{1}{q}}
\underline{v}(x_k-,\i)\right\}
\right\|_{\ell^{\rho}(\Z)} \notag \\
\le & \left\| \left\{
\frac{\vp(x_k)^{\frac{1}{q}}}{U(x_k)^{\frac{1}{p}}} \left(
\int_{[x_{k-1},x_k)} \left( \int_{x_{k-1}}^t
u(s)\,ds\right)^{\frac{p^*}{p}} d\left( - \underline{v}(t-,\i)^{p^*}
\right) \right)^{\frac{1}{p^*}}\right\}
\right\|_{\ell^{\rho}(\Z)} \notag \\
& + \left\| \left\{
\frac{\vp(x_k)^{\frac{1}{q}}}{U(x_k)^{\frac{1}{p}}}
U(x_{k-1})^{\frac{1}{p}} \underline{v}(x_{k-1}-,\i)  \right\}
\right\|_{\ell^{\rho}(\Z)} \\
& + \left\| \left\{ \vp(x_k)^{\frac{1}{q}}
\underline{v}(x_k-,\i)\right\}
\right\|_{\ell^{\rho}(\Z)} \notag \\
\ls & \left\| \left\{
\frac{\vp(x_k)^{\frac{1}{q}}}{U(x_k)^{\frac{1}{p}}} \left(
\int_{[x_{k-1},x_k)} \left( \int_{x_{k-1}}^t
u(s)\,ds\right)^{\frac{p^*}{p}} d\left( - \underline{v}(t-,\i)^{p^*}
\right) \right)^{\frac{1}{p^*}}\right\}
\right\|_{\ell^{\rho}(\Z)} \notag \\
& + \left\| \left\{ \vp(x_{k-1})^{\frac{1}{q}}
\underline{v}(x_{k-1}-,\i) \right\} \right\|_{\ell^{\rho}(\Z)} +
\left\| \left\{ \vp(x_k)^{\frac{1}{q}}
\underline{v}(x_k-,\i)\right\}
\right\|_{\ell^{\rho}(\Z)} \notag \\
\ap & \left\| \left\{
\frac{\vp(x_k)^{\frac{1}{q}}}{U(x_k)^{\frac{1}{p}}} \left(
\int_{[x_{k-1},x_k)} \left( \int_{x_{k-1}}^t
u(s)\,ds\right)^{\frac{p^*}{p}} d\left( - \underline{v}(t-,\i)^{p^*}
\right) \right)^{\frac{1}{p^*}}\right\}
\right\|_{\ell^{\rho}(\Z)} \notag \\
& + \left\| \left\{ \vp(x_k)^{\frac{1}{q}}
\underline{v}(x_k-,\i)\right\} \right\|_{\ell^{\rho}(\Z)}.
\end{align*}
Again integrating by parts we have that
\begin{align*}
A_2 \ls & \left\| \left\{
\frac{\vp(x_k)^{\frac{1}{q}}}{U(x_k)^{\frac{1}{p}}} \left(
\int_{x_{k-1}}^{x_k} \underline{v}(t-,\i)^{p^*} d\left(
\int_{x_{k-1}}^t u(s)\,ds\right)^{\frac{p^*}{p}}
\right)^{\frac{1}{p^*}}\right\}
\right\|_{\ell^{\rho}(\Z)} \notag \\
& + \left\| \left\{ \vp(x_k)^{\frac{1}{q}}
\underline{v}(x_k-,\i)\right\}
\right\|_{\ell^{\rho}(\Z)}\notag\\
= & \left\| \left\{
\frac{\vp(x_k)^{\frac{1}{q}}}{U(x_k)^{\frac{1}{p}}} \left(
\int_{x_{k-1}}^{x_k} \left( \int_{x_{k-1}}^t u(s)\,ds\right)^{p^*}
u(t) \underline{v}(t-,\i)^{p^*} \,dt \right)^{\frac{1}{p^*}}\right\}
\right\|_{\ell^{\rho}(\Z)} \notag \\
& + \left\| \left\{ \vp(x_k)^{\frac{1}{q}}
\underline{v}(x_k-,\i)\right\}
\right\|_{\ell^{\rho}(\Z)}\notag\\
= & A_1 + \left\| \left\{ \vp(x_k)^{\frac{1}{q}}
\underline{v}(x_k-,\i)\right\} \right\|_{\ell^{\rho}(\Z)}.
\end{align*}
Since
\begin{align}
\left\| \left\{ \vp(x_k)^{\frac{1}{q}}
\underline{v}(x_k-,\i)\right\}
\right\|_{\ell^{\rho}(\Z)} & \notag\\
& \hspace{-4cm} =  \left\| \left\{ \vp(x_{k-1})^{\frac{1}{q}}
\underline{v}(x_{k-1}-,\i)\right\} \right\|_{\ell^{\rho}(\Z)} \notag \\
& \hspace{-4cm} \ap \left\| \left\{
\frac{\vp(x_{k-1})^{\frac{1}{q}}}{U(x_{k-1})^{\frac{1}{p}}}
\underline{v}(x_{k-1}-,\i) \left( \int_{x_{k-2}}^{x_{k-1}} \left(
\int_{x_{k-2}}^t u(s)\,ds\right)^{p^*}
u(t)\,dt\right)^{\frac{1}{p^*}}\right\}\right\|_{\ell^{\rho}(\Z)} \notag \\
& \hspace{-4cm} \le \left\| \left\{
\frac{\vp(x_{k-1})^{\frac{1}{q}}}{U(x_{k-1})^{\frac{1}{p}}} \left(
\int_{x_{k-2}}^{x_{k-1}} \left( \int_{x_{k-2}}^t
u(s)\,ds\right)^{p^*}
u(t)\underline{v}(t-,\i)\,dt\right)^{\frac{1}{p^*}}\right\}\right\|_{\ell^{\rho}(\Z)} \notag \\
& \hspace{-4cm} = \left\| \left\{
\frac{\vp(x_k)^{\frac{1}{q}}}{U(x_k)^{\frac{1}{p}}} \left(
\int_{x_{k-1}}^{x_k} \left( \int_{x_{k-1}}^t u(s)\,ds\right)^{p^*}
u(t) \underline{v}(t-,\i)^{p^*} \,dt \right)^{\frac{1}{p^*}}\right\}
\right\|_{\ell^{\rho}(\Z)} = A_1, \label{eq2-568259682=}
\end{align}
we obtain that
$$
A_2 \ls A_1.
$$
\end{proof}

\begin{lem}\label{lem3.5}
Assume that the conditions of Lemma \ref{lem3.3} are fulfilled. Then
$$
A_2 \ap A_3,
$$
where
\begin{align*}
A_3 : = & \left\| \left\{
\frac{\vp(x_k)^{\frac{1}{q}}}{U(x_k)^{\frac{1}{p}}} \left(
\int_{[x_{k-1},x_k)} U(t)^{\frac{p^*}{p}} d\left( -
\underline{v}(t-,\i)^{p^*} \right) \,dt
\right)^{\frac{1}{p^*}}\right\} \right\|_{\ell^{\rho}(\Z)} \notag \\
& + \left\| \left\{
\vp(x_k)^{\frac{1}{q}}\underline{v}(x_k-,\i\right\}
\right\|_{\ell^{\rho}(\Z)}.
\end{align*}
\end{lem}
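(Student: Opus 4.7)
My plan is to apply the Lebesgue--Stieltjes integration by parts formula \eqref{integration.by.parts} to each summand, and then bound the resulting boundary contribution using the geometric monotonicity of the discretizing sequence. A key preliminary observation is that, since $p^*/p - 1 = p^*$, one has $d(U(t)^{p^*/p}) = (p^*/p)\,U(t)^{p^*} u(t)\,dt$, so each $A_2$-summand is equivalent to $(\vp(x_k)^{1/q}/U(x_k)^{1/p})\,\bigl(\int_{[x_{k-1},x_k)} \underline{v}(t,\i)^{p^*}\,d(U(t)^{p^*/p})\bigr)^{1/p^*}$. Applying \eqref{integration.by.parts} with the non-decreasing function $f(t) = -\underline{v}(t,\i)^{p^*}$ and the continuous function $g(t) = U(t)^{p^*/p}$ (continuity of $g$ follows from the admissibility of $U$), I obtain the identity
\begin{equation*}
\int_{[x_{k-1},x_k)} \underline{v}(t,\i)^{p^*}\,d(U(t)^{p^*/p}) = \underline{v}(x_k-,\i)^{p^*} U(x_k)^{p^*/p} - \underline{v}(x_{k-1}-,\i)^{p^*} U(x_{k-1})^{p^*/p} + \int_{[x_{k-1},x_k)} U(t)^{p^*/p}\,d\bigl(-\underline{v}(t-,\i)^{p^*}\bigr).
\end{equation*}

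For the direction $A_2 \lesssim A_3$, I discard the non-positive middle term on the right-hand side, extract the $1/p^*$-th power (using $(a+b)^{1/p^*} \ap a^{1/p^*} + b^{1/p^*}$ for $a,b \ge 0$), multiply by $\vp(x_k)^{1/q}/U(x_k)^{1/p}$, and take the $\ell^{\rho}(\Z)$-norm; the two resulting contributions are exactly the two $\ell^{\rho}$-norms comprising $A_3$.

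For $A_3 \lesssim A_2$, rearranging the same identity yields, on each $[x_{k-1},x_k)$, the inequality $\int U(t)^{p^*/p}\,d(-\underline{v}(t-,\i)^{p^*}) \le \int \underline{v}(t,\i)^{p^*}\,d(U(t)^{p^*/p}) + \underline{v}(x_{k-1}-,\i)^{p^*} U(x_{k-1})^{p^*/p}$. After raising to the $1/p^*$-power and multiplying by $\vp(x_k)^{1/q}/U(x_k)^{1/p}$, the first piece contributes an equivalent of the $A_2$-summand. For the second piece, I exploit the geometric decrease of $\vp(x_k)^{1/q}/U(x_k)^{1/p}$ (a consequence of $\vp(x_k) U(x_k)^{-q/p}\dd$, from the definition of the discretizing sequence) to conclude $(\vp(x_k)^{1/q}/U(x_k)^{1/p})\,U(x_{k-1})^{1/p} \le \vp(x_{k-1})^{1/q}$. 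Then shift-invariance of the $\ell^{\rho}(\Z)$-norm identifies $\|\{\vp(x_{k-1})^{1/q}\,\underline{v}(x_{k-1}-,\i)\}\|_{\ell^{\rho}(\Z)}$ with $\|\{\vp(x_k)^{1/q}\,\underline{v}(x_k-,\i)\}\|_{\ell^{\rho}(\Z)}$, and the latter has already been shown to be $\lesssim A_1 \ap A_2$ in the chain \eqref{eq2-568259682=} from the proof of Lemma~\ref{lem3.4}.

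The main technical subtlety will be to recognise that the ``boundary debt'' $\vp(x_{k-1})^{1/q}\,\underline{v}(x_{k-1}-,\i)$ produced by integration by parts coincides --- after a one-step index shift --- with the second summand of $A_3$, which has already been absorbed into the chain of estimates proving $A_1 \ap A_2$ in the preceding lemma; once this is recognised, the rest is a routine combination of integration by parts and shift-invariance of $\ell^{\rho}(\Z)$-norms.
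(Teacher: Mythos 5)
Your proposal is correct and takes essentially the same approach as the paper: both rest on the integration-by-parts formula \eqref{integration.by.parts} applied with $f(t)=-\underline{v}(t,\i)^{p^*}$ and $g(t)=U(t)^{p^*/p}$ on each $[x_{k-1},x_k)$, discarding the non-positive boundary term in the relevant direction, and absorbing the remaining boundary debt $\vp(x_{k-1})^{1/q}\underline{v}(x_{k-1}-,\i)$ through the monotonicity $\vp(x_k)^{1/q}/U(x_k)^{1/p}\dd$ together with the estimate \eqref{eq2-568259682=} from Lemma~\ref{lem3.4}. The only difference is cosmetic: you present $A_2\lesssim A_3$ before $A_3\lesssim A_2$, whereas the paper does the reverse.
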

\begin{proof}
Integrating by parts, in view of inequality \eqref{eq2-568259682=}
and Lemma \ref{lem3.4}, we have that
\begin{align*}
A_3  \le & \left\| \left\{
\frac{\vp(x_k)^{\frac{1}{q}}}{U(x_k)^{\frac{1}{p}}} \left(
\int_{x_{k-1}}^{x_k} \underline{v}(t,\i)^{p^*} d\left(
U(t)^{\frac{p^*}{p}} \right) \,dt
\right)^{\frac{1}{p^*}}\right\} \right\|_{\ell^{\rho}(\Z)} \notag \\
& + \left\| \left\{
\frac{\vp(x_k)^{\frac{1}{q}}}{U(x_k)^{\frac{1}{p}}}
U(x_{k-1})^{\frac{1}{p}}\underline{v}(x_{k-1}-,\i)  \right\}
\right\|_{\ell^{\rho}(\Z)} + \left\| \left\{
\vp(x_k)^{\frac{1}{q}}\underline{v}(x_k-,\i\right\}
\right\|_{\ell^{\rho}(\Z)} \notag \\
\ls & A_2 + \left\| \left\{
\vp(x_{k-1})^{\frac{1}{q}}\underline{v}(x_{k-1}-,\i\right\}
\right\|_{\ell^{\rho}(\Z)} + \left\| \left\{
\vp(x_k)^{\frac{1}{q}}\underline{v}(x_k-,\i\right\}
\right\|_{\ell^{\rho}(\Z)}\notag \\
\ap & A_2 + \left\| \left\{
\vp(x_k)^{\frac{1}{q}}\underline{v}(x_k-,\i\right\}
\right\|_{\ell^{\rho}(\Z)} \ls A_2 + A_1 \ap A_2.
\end{align*}
On the other hand, again integrating by parts, we get that
\begin{align*}
A_2 = & \left\| \left\{
\frac{\vp(x_k)^{\frac{1}{q}}}{U(x_k)^{\frac{1}{p}}}
\left(\int_{x_{k-1}}^{x_k}\underline{v}(t,\i)^{p^*} \,d\left(
U(t)^{\frac{p^*}{p}}\right)
\right)^{\frac{1}{p^*}}\right\} \right\|_{\ell^{\rho}(\Z)} \notag \\
\ls & \left\| \left\{
\frac{\vp(x_k)^{\frac{1}{q}}}{U(x_k)^{\frac{1}{p}}} \left(
\int_{[x_{k-1},x_k)} U(t)^{\frac{p^*}{p}} d\left( -
\underline{v}(t-,\i)^{p^*} \right)
\right)^{\frac{1}{p^*}}\right\} \right\|_{\ell^{\rho}(\Z)} \notag \\
& + \left\| \left\{
\vp(x_k)^{\frac{1}{q}}\underline{v}(x_k-,\i\right\}
\right\|_{\ell^{\rho}(\Z)} = A_3.
\end{align*}
\end{proof}

\begin{lem}\label{lem3.6}
Assume that the conditions of Lemma \ref{lem3.3} are fulfilled. Then
$$
A_3 \ap A_4,
$$
where
\begin{align*}
A_4 : = & \left\| \left\{
\frac{\vp(x_k)^{\frac{1}{q}}}{U(x_k)^{\frac{1}{p}}} \left(
\int_{[x_{k-1},x_k)} U(t)^{\frac{p^*}{p}} d\left( -
\underline{v}(t-,\i)^{p^*} \right)
\right)^{\frac{1}{p^*}}\right\} \right\|_{\ell^{\rho}(\Z)} \notag \\
& + \left\| \left\{ \vp(x_k)^{\frac{1}{q}}
\left(\int_{[x_k,x_{k+1})} d \left( -
\underline{v}(t-,\i)^{p^*}\right) \right)^{\frac{1}{p^*}}\right\}
\right\|_{\ell^{\rho}(\Z)}.
\end{align*}
\end{lem}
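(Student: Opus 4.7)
The plan is to exploit the fact that the first summands in $A_3$ and $A_4$ are identical, so it suffices to establish the equivalence
\begin{equation*}
\|\{\vp(x_k)^{\frac{1}{q}}\underline{v}(x_k-,\i)\}\|_{\ell^{\rho}(\Z)}\ap \left\|\left\{\vp(x_k)^{\frac{1}{q}}\left(\int_{[x_k,x_{k+1})}d(-\underline{v}(t-,\i)^{p^*})\right)^{\frac{1}{p^*}}\right\}\right\|_{\ell^{\rho}(\Z)}
\end{equation*}
of the two second summands. First, I would compute the Stieltjes mass of each interval: since $t\mapsto -\underline{v}(t-,\i)^{p^*}$ is non-decreasing and left-continuous, formula \eqref{eq--} gives
\begin{equation*}
\delta_k:=\int_{[x_k,x_{k+1})}d(-\underline{v}(t-,\i)^{p^*})=\underline{v}(x_k-,\i)^{p^*}-\underline{v}(x_{k+1}-,\i)^{p^*}\ge 0.
\end{equation*}
Telescoping this identity and using $\lim_{t\to\i}\underline{v}(t,\i)=0$ from Remark \ref{remconv} yields
\begin{equation*}
\underline{v}(x_k-,\i)^{p^*}=\sum_{j\ge k}\delta_j.
\end{equation*}

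Second, since the discretization hypothesis forces $\vp(x_k)\uu$, the sequence $\{\vp(x_k)^{p^*/q}\}$ is itself geometrically increasing. I would then apply the tail-sum part of Lemma \ref{lem.2.3} to this geometrically increasing weight, with the non-negative sequence $\{\delta_k\}$, in the space $\ell^{\rho/p^*}(\Z)$ (admissible because $\rho/p^*\in(0,\i]$):
\begin{equation*}
\left\|\left\{\vp(x_k)^{\frac{p^*}{q}}\sum_{j\ge k}\delta_j\right\}\right\|_{\ell^{\rho/p^*}(\Z)}\ap \|\{\vp(x_k)^{\frac{p^*}{q}}\delta_k\}\|_{\ell^{\rho/p^*}(\Z)}.
\end{equation*}
Raising both sides to the power $1/p^*$ converts the $\ell^{\rho/p^*}$-norm into the $\ell^{\rho}$-norm. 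On the left-hand side, $(\sum_{j\ge k}\delta_j)^{1/p^*}=\underline{v}(x_k-,\i)$, recovering $A_3$'s second term; on the right-hand side we recover $\|\{\vp(x_k)^{1/q}\delta_k^{1/p^*}\}\|_{\ell^{\rho}(\Z)}$, which is $A_4$'s second term. Together with the identity of the first summands this proves $A_3\ap A_4$.

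The main point to watch is that the argument avoids splitting into the cases $p^*\ge 1$ and $p^*<1$: the trivial pointwise estimate $(b^{p^*}-c^{p^*})^{1/p^*}\le b$ only gives $A_4\ls A_3$, and an attempt to reverse it term-by-term via subadditivity of $x\mapsto x^{1/p^*}$ fails when $p^*<1$. The clean way around this obstacle is the shift in exponent above, which reduces everything to a single application of Lemma \ref{lem.2.3} at the exponent $\rho/p^*$, valid uniformly in all parameters since Lemma \ref{lem.2.3} is stated for every $q\in(0,\i]$ and $\vp(x_k)^{p^*/q}$ remains geometrically increasing for any $p^*,q>0$.
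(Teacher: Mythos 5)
Your proof is correct and follows essentially the same route as the paper: the paper likewise telescopes $\underline{v}(x_k-,\i)^{p^*}=\sum_{i\ge k}\bigl[\underline{v}(x_i-,\i)^{p^*}-\underline{v}(x_{i+1}-,\i)^{p^*}\bigr]$ (dropping the limit term via Remark \ref{remconv}) and then passes from the tail sum to the single increment by Lemma \ref{lem.2.3}, which, exactly as you note, must be read at exponent $\rho/p^*$ to be valid for every $p^*\in(0,\infty)$. Your write-up merely makes explicit the exponent shift and the reason a naive subadditivity argument would break down for $p^*<1$, points the paper leaves implicit under its opening invocation of Lemma \ref{lem.2.3}.
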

\begin{proof}
By Lemma \ref{lem.2.3}, in view of Remark \ref{remconv}, we have
that
\begin{align*}
\left\| \left\{ \vp(x_k)^{\frac{1}{q}}\underline{v}(x_k-,\i\right\}
\right\|_{\ell^{\rho}(\Z)} & \notag \\
& \hspace{-3cm}\ap \left\| \left\{ \vp(x_k)^{\frac{1}{q}}\left(
\sum_{i=k}^{\i} \left[\underline{v}(x_i-,\i)^{p^*} -
\underline{v}(x_{i+1}-,\i)^{p^*}
\right]\right)^{\frac{1}{p^*}} \right\} \right\|_{\ell^{\rho}(\Z)} \notag\\
& \hspace{-2.5cm} + \left\| \left\{
\vp(x_k)^{\frac{1}{q}}\lim_{t\rightarrow\i}\underline{v}(t,\i)
\right\}
\right\|_{\ell^{\rho}(\Z)} \notag \\
& \hspace{-3cm}\ap \left\| \left\{ \vp(x_k)^{\frac{1}{q}}\left(
\underline{v}(x_k-,\i)^{p^*} - \underline{v}(x_{k+1}-,\i)^{p^*}
\right)^{\frac{1}{p^*}} \right\} \right\|_{\ell^{\rho}(\Z)} \notag\\
& \hspace{-3cm}\ap \left\| \left\{ \vp(x_k)^{\frac{1}{q}}\left(
\int_{[x_k,x_{k+1})} d \left( - \underline{v}(t-,\i)^{p^*}\right)
\right)^{\frac{1}{p^*}} \right\} \right\|_{\ell^{\rho}(\Z)}.
\end{align*}
\end{proof}

\begin{lem}\label{lem3.60}
Assume that the conditions of Lemma \ref{lem3.3} are fulfilled. Then
$$
A_4 \ap A_5,
$$
where
\begin{align*}
A_5 : =  \left\| \left\{ \vp(x_k)^{\frac{1}{q}} \left(\int_{[0,\i)}
{\mathcal U}(t,x_k)^{\frac{p^*}{p}}d \left( -
\underline{v}(t-,\i)^{p^*}\right) \right)^{\frac{1}{p^*}}\right\}
\right\|_{\ell^{\rho}(\Z)}.
\end{align*}
\end{lem}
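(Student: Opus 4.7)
My strategy is to split the integral defining $A_5$ at $t=x_k$, identify each resulting piece with one of the two summands of $A_4$, and use Lemma \ref{lem.2.3} to absorb the ``cumulative'' integrals into ``local'' ones. From the definition $\mathcal{U}(t,x_k) = U(t)/(U(t)+U(x_k))$ one checks that $\mathcal{U}(t,x_k) \ap U(t)/U(x_k)$ when $t\le x_k$ and $\mathcal{U}(t,x_k) \ap 1$ when $t\ge x_k$. Writing $d\mu(t) := d(-\underline{v}(t-,\i)^{p^*})$, this gives
$$
\int_{[0,\i)}\mathcal{U}(t,x_k)^{p^*/p}\,d\mu(t) \ap U(x_k)^{-p^*/p}\int_{[0,x_k)}U(t)^{p^*/p}\,d\mu(t) + \int_{[x_k,\i)}d\mu(t).
$$
Combined with $(a+b)^{1/p^*} \ap a^{1/p^*}+b^{1/p^*}$ and the $\ell^\rho$ quasi-triangle inequality, this reduces the problem to showing $A_4 \ap T_1' + T_2'$, where
$$
T_1' := \left\|\frac{\vp(x_k)^{1/q}}{U(x_k)^{1/p}}\Big(\int_{[0,x_k)}U(t)^{p^*/p}d\mu\Big)^{1/p^*}\right\|_{\ell^\rho(\Z)}, \quad T_2' := \left\|\vp(x_k)^{1/q}\Big(\int_{[x_k,\i)}d\mu\Big)^{1/p^*}\right\|_{\ell^\rho(\Z)}.
$$

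To handle $T_2'$ I would note that, by telescoping and Remark \ref{remconv} (which guarantees $\lim_{t\to\i}\underline{v}(t,\i)=0$), $\int_{[x_k,\i)}d\mu = \underline{v}(x_k-,\i)^{p^*}$, whence $T_2' = \|\vp(x_k)^{1/q}\underline{v}(x_k-,\i)\|_{\ell^\rho(\Z)}$. This is exactly the quantity which, in the proof of Lemma \ref{lem3.6}, was shown to be equivalent to the second summand of $A_4$: expand $\underline{v}(x_k-,\i)^{p^*} = \sum_{i \ge k}(\underline{v}(x_i-,\i)^{p^*} - \underline{v}(x_{i+1}-,\i)^{p^*})$ and apply Lemma \ref{lem.2.3} to the geometrically increasing sequence $\{\vp(x_k)^{p^*/q}\}$ in the quasi-norm $\ell^{\rho/p^*}(\Z)$.

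For $T_1'$, I would decompose $\int_{[0,x_k)}U(t)^{p^*/p}d\mu = \sum_{m \le k}\beta_m$ with $\beta_m := \int_{[x_{m-1},x_m)}U(t)^{p^*/p}d\mu \ge 0$. By the Remark preceding Lemma \ref{lem14357861}, $\vp(x_k)U(x_k)^{-q/p}\dd$, hence $\tau_k := (\vp(x_k)^{1/q}/U(x_k)^{1/p})^{p^*} = (\vp(x_k)U(x_k)^{-q/p})^{p^*/q}$ is geometrically decreasing. Applying Lemma \ref{lem.2.3} in the quasi-norm $\ell^{\rho/p^*}(\Z)$ yields
$$
\left\|\tau_k\sum_{m \le k}\beta_m\right\|_{\ell^{\rho/p^*}(\Z)} \ap \left\|\tau_k\beta_k\right\|_{\ell^{\rho/p^*}(\Z)},
$$
and raising to the $1/p^*$-th power identifies $T_1'$ with the first summand of $A_4$. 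Putting the two equivalences together gives $A_5 \ap A_4$.

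The main delicate point is the application of Lemma \ref{lem.2.3} ``through'' the outer $1/p^*$-power; this is legitimate because the lemma holds for every quasi-norm $\ell^s(\Z)$ with $s \in (0,\i]$, so one may work in $\ell^{\rho/p^*}(\Z)$ (with the convention $\rho/p^* = \i$ when $\rho=\i$, where the sup-version of Lemma \ref{lem.2.3} applies) and then raise to the $1/p^*$. Everything else is a bookkeeping exercise of the same flavour as the proofs of Lemmas \ref{lem3.3}--\ref{lem3.6}.
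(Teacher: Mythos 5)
Your proof is correct and follows essentially the same route as the paper: split $A_5$ at $t=x_k$, use $\mathcal U(t,x_k)\approx U(t)/U(x_k)$ for $t\le x_k$ and $\mathcal U(t,x_k)\approx 1$ for $t\ge x_k$, and then apply Lemma \ref{lem.2.3} in $\ell^{\rho/p^*}(\Z)$ (with the geometrically decreasing weights $(\vp(x_k)/U(x_k)^{q/p})^{p^*/q}$ and geometrically increasing weights $\vp(x_k)^{p^*/q}$) to pass between the local integrals $\int_{[x_{k-1},x_k)}$, $\int_{[x_k,x_{k+1})}$ and the cumulative ones $\int_{[0,x_k)}$, $\int_{[x_k,\i)}$. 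The paper proceeds from $A_4$ to $A_5$ rather than the other way around and leaves the justification of applying Lemma \ref{lem.2.3} through the outer $1/p^*$-power implicit, whereas you spell it out, but the underlying argument is the same.
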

\begin{proof}
By Lemma \ref{lem.2.3}, we have that
\begin{align*}
A_4  \ap & \left\| \left\{
\frac{\vp(x_k)^{\frac{1}{q}}}{U(x_k)^{\frac{1}{p}}} \left(
\int_{[0,x_k)} U(t)^{\frac{p^*}{p}} d\left( -
\underline{v}(t-,\i)^{p^*} \right)
\right)^{\frac{1}{p^*}}\right\} \right\|_{\ell^{\rho}(\Z)} \notag \\
& + \left\| \left\{ \vp(x_k)^{\frac{1}{q}} \left(\int_{[x_k,\i)} d
\left( - \underline{v}(t-,\i)^{p^*}\right)
\right)^{\frac{1}{p^*}}\right\} \right\|_{\ell^{\rho}(\Z)}.
\end{align*}
Hence
\begin{align*}
A_4  \ap & \left\| \left\{ \vp(x_k)^{\frac{1}{q}} \left(
\int_{[0,x_k)} {\mathcal U}(t,x_k)^{\frac{p^*}{p}} d\left( -
\underline{v}(t-,\i)^{p^*} \right)
\right)^{\frac{1}{p^*}}\right\} \right\|_{\ell^{\rho}(\Z)} \notag \\
& + \left\| \left\{ \vp(x_k)^{\frac{1}{q}} \left(\int_{[x_k,\i)}
{\mathcal U}(t,x_k)^{\frac{p^*}{p}}\,d \left( -
\underline{v}(t-,\i)^{p^*}\right)
\right)^{\frac{1}{p^*}}\right\} \right\|_{\ell^{\rho}(\Z)}\\
\ap & \left\| \left\{ \vp(x_k)^{\frac{1}{q}} \left(\int_{[0,\i)}
{\mathcal U}(t,x_k)^{\frac{p^*}{p}}\,d \left( -
\underline{v}(t-,\i)^{p^*}\right) \right)^{\frac{1}{p^*}}\right\}
\right\|_{\ell^{\rho}(\Z)} = A_5.
\end{align*}
\end{proof}

We are now in position to state and prove our first main theorem.
\begin{thm}\label{main1001}
Let $0<p<1$, $0 < q < \i$, and let $u,\,v,\,w$ be weights. Assume
that $u$ is such that $U$ is admissible and the measure $w(t)dt$ is
non-degenerate with respect to $U^{\frac{q}{p}}$.

{\rm (i)} Let $1\leq q<\i$. Then inequality \eqref{eq.4.1} holds
for every $h \in \mp^+(0,\i)$ if and only if
\begin{align*}
I_1: = \sup_{x\in (0,\i)} \left(\int_0^{\i}{\mathcal
U}(x,s)^{\frac{q}{p}}w(s)ds\right)^{\frac{1}{q}} \,\left(
\int_{[0,\i)} {\mathcal U}(t,x)^{\frac{p^*}{p}}d\left( -
\underline{v}(t-,\i)^{p^*} \right)\right)^{\frac{1}{p^*}}<\i.
\end{align*}
Moreover, the best constant $c$ in \eqref{eq.4.1} satisfies $c\ap
I_1$.

{\rm (ii)} Let $0<q<1$. Then inequality \eqref{eq.4.1} holds for
every $h \in \mp^+(0,\i)$ if and only if
\begin{align*}
I_2:= \left(\int_0^{\i}\left(\int_0^{\i}{\mathcal
U}(x,s)^{\frac{q}{p}}w(s)ds\right)^{q^*}
\left(\int_{[0,\i)}{\mathcal
U}(t,x)^{\frac{p^*}{p}}d\left(-\underline{v}(t-,\i)^{p^*}\right)
\right)^{\frac{q^*}{p^*}}w(x)\,dx\right)^{\frac{1}{q^*}}  <\i.
\end{align*}
Moreover, the best constant $c$ in \eqref{eq.4.1} satisfies $c\ap
I_2$.
\end{thm}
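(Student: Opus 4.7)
The overall strategy is to concatenate the six lemmas of the preceding development: Lemmas \ref{lem14357861}, \ref{lem3.3}, \ref{lem3.4}, \ref{lem3.5}, \ref{lem3.6} and \ref{lem3.60} together yield
\[
c \ap A \ap A_1 \ap A_2 \ap A_3 \ap A_4 \ap A_5,
\]
where, in the present theorem, $\rho = \i$ in case~(i) and $\rho = q^*$ in case~(ii). So the whole task reduces to anti-discretizing $A_5$, i.e.\ to replacing its $\ell^\rho(\Z)$-norm over the discretizing sequence $\{x_k\}$ by a supremum (case~(i)) or a Lebesgue integral (case~(ii)) over $\I$.

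Before doing so I would record a structural remark about the function
\[
G(x) := \int_{[0,\i)} \mathcal{U}(t,x)^{p^*/p}\, d\bigl(-\underline{v}(t-,\i)^{p^*}\bigr), \qquad x\in\I,
\]
so that $A_5 = \|\vp(x_k)^{1/q}G(x_k)^{1/p^*}\|_{\ell^\rho(\Z)}$. Whereas $\vp\in\O_{U^{q/p}}$ is $U^{q/p}$-quasiconcave, the function $G$ plays the dual role: because $\mathcal{U}(t,x)=U(t)/(U(x)+U(t))$ is decreasing in $x$ while $\mathcal{U}(t,x)\,U(x)^{p^*/p} = (U(x)U(t)/(U(x)+U(t)))^{p^*/p}$ is increasing in $x$, $G$ is decreasing, $GU^{p^*/p}$ is increasing, and equivalently $G^{-1}\in\O_{U^{p^*/p}}$.

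For case~(i) ($1\le q<\i$, $\rho=\i$) the target is $\sup_k F(x_k) \ap \sup_{x\in\I} F(x)$ for $F(x):=\vp(x)^{1/q}G(x)^{1/p^*}$; only the $\gs$-direction needs work. I would fix $x\in[x_k,x_{k+1}]$ and use the decomposition $\Z=\Z_1\cup\Z_2$ attached to the discretizing sequence. If $k\in\Z_1$, then $\vp(x)\ap\vp(x_k)$ combined with monotonicity $G(x)\le G(x_k)$ gives $F(x)\ls F(x_k)$. If $k\in\Z_2$, then $\vp(x)\ap\vp(x_{k+1})(U(x)/U(x_{k+1}))^{q/p}$ and $G(x)\le G(x_{k+1})(U(x_{k+1})/U(x))^{p^*/p}$; the $U$-powers cancel exactly (the exponent $+1/p$ inherited from $\vp^{1/q}$ is killed by the $-1/p$ inherited from $G^{1/p^*}$), yielding $F(x)\ls F(x_{k+1})$. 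In either sub-case $F(x)\ls \sup_k F(x_k)$, and taking $\sup_{x\in\I}$ gives $I_1\ap A_5\ap c$. (An alternative is Lemma \ref{lem.2.2} applied to $\Phi=\vp^p\in\O_{U^q}$ and $g=G^{-p}\in\O_{U^{p^*}}$, after raising the result to the $1/p$-power.)

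For case~(ii) ($0<q<1$, $\rho=q^*$) the anti-discretization is summatory and I would invoke Theorem \ref{thm.2.1} with $\nu(t)=w(t)\,dt$ (whose fundamental function with respect to $U^{q/p}$ is our $\vp$), with $\sigma:=G^{-1}\in\O_{U^{p^*/p}}$, and with the parameter $r=q^*/p$. A short bookkeeping of exponents -- using $q^*/q - 1 = q^*$ and $r/(p^*/p)=q^*/p^*$ -- converts the conclusion of Theorem \ref{thm.2.1} into
\[
\int_0^{\i}\vp(t)^{q^*}G(t)^{q^*/p^*}w(t)\,dt \ap \sum_{k\in\Z}\vp(x_k)^{q^*/q}G(x_k)^{q^*/p^*} = A_5^{q^*},
\]
and extracting the $q^*$-th root gives $I_2 \ap A_5 \ap c$. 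The main obstacle I anticipate is just this exponent bookkeeping together with the (possibly degenerate) behaviour of $\sigma=G^{-1}$ near $0$: if $G(0+)$ is finite so the strict non-degeneracy of $\sigma$ at $0$ fails, a standard truncation/approximation applied to $\underline{v}(\,\cdot\,,\i)$ absorbs it.
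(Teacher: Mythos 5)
Your proposal is correct and follows essentially the same route as the paper: Lemmas \ref{lem14357861} and \ref{lem3.3}--\ref{lem3.60} give $c\ap A_5$, after which the paper anti-discretizes via Lemma \ref{lem.2.2} in case~(i) and Theorem \ref{thm.2.1} in case~(ii), exactly as you propose (your hands-on $\Z_1/\Z_2$ argument for (i) is just an unwinding of what Lemma \ref{lem.2.2} asserts). Your verification that $G^{-1}$ is $U^{p^*/p}$-quasiconcave and your exponent bookkeeping $q^*/q-1=q^*$, $r/(p^*/p)=q^*/p^*$ match the paper's intended application.
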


\begin{proof}
{\rm (i)} The proof of the statement follows by using Lemmas
\ref{lem14357861}, \ref{lem3.3}-\ref{lem3.60} and \ref{lem.2.2}.

{\rm (ii)} The proof of the statement follows by combining Lemmas
\ref{lem14357861}, \ref{lem3.3}-\ref{lem3.60} and Theorem
\ref{thm.2.1}.
\end{proof}

\

\noindent{\bf The case $1 \le p < \i$, $0 < q < \i$.} The following
lemma is true.
\begin{lem}\label{lem3.8}
Let $1\le p<\i$, $0<q<\i$ and let $u,\,v,\,w$ be weights. Assume
that $u$ is such that $U$ is admissible and the measure $w(t)dt$ is
non-degenerate with respect to $U^{\frac{q}{p}}$. Let $\{x_k\}$ be
any discretizing sequence for $\vp$ defined by \eqref{eq.4.3}. Then
$$
A \ap B_1,
$$
where
\begin{align*}
B_1  : =  \left\| \left\{
\frac{\vp(x_k)^{\frac{1}{q}}}{U(x_k)^{\frac{1}{p}}} \left(
\sup_{x_{k-1}<t<x_k} \left( \int_{x_{k-1}}^t
u(s)\,ds\right)^{\frac{1}{p}} \underline{v}(t,\i) \right)\right\}
\right\|_{\ell^{\rho}(\Z)}.
\end{align*}
\end{lem}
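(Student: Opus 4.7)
The plan is to follow the template of the proof of Lemma~\ref{lem3.3} (the analog for $0<p<1$), but replacing the integral-type Hardy constant used there by its supremum-type counterpart now available for $1\le p<\i$. More precisely, Lemma~\ref{lem.2.800}(a) yields $B(x_{k-1},x_k)\ap\sup_{x_{k-1}<t<x_k}(\int_{x_{k-1}}^t u(s)\,ds)^{1/p}\underline v(t,x_k)$. Substituting this into the formula for $A$ from Lemma~\ref{lem14357861} and using Remark~\ref{remconv} to rewrite the second term of $A$ with $\underline v(x_k,\i)$ in place of $\underline v(x_k,x_{k+1})$ (under the $\ell^\rho$-norm), the task becomes to compare the resulting two-term sum $A'$ with $B_1$, whose $k$-th summand has the same shape as the first term of $A'$ except that $\underline v(t,x_k)$ is replaced by $\underline v(t,\i)$. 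The pointwise identity driving the comparison is $\underline v(t,\i)=\max\{\underline v(t,x_k),\,\underline v(x_k,\i)\}$ for every $t<x_k$.

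For $B_1\ls A'$, I would split $\underline v(t,\i)\le\underline v(t,x_k)+\underline v(x_k,\i)$ inside the supremum defining the $k$-th summand of $B_1$. The first piece reproduces, via Lemma~\ref{lem.2.800}(a), the first term of $A'$. The second piece is $t$-independent and factors out of the supremum, leaving $\sup_{x_{k-1}<t<x_k}(\int_{x_{k-1}}^tu)^{1/p}=(U(x_k)-U(x_{k-1}))^{1/p}\ap U(x_k)^{1/p}$, the last equivalence following since $\{U(x_k)\}$ is geometrically increasing by the definition of a discretizing sequence. That factor cancels the prefactor $U(x_k)^{-1/p}$ and produces $\vp(x_k)^{1/q}\underline v(x_k,\i)$, which is the second term of $A'$. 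Combining both contributions via the (quasi-)triangle inequality in $\ell^\rho$ yields $B_1\ls A'\ap A$.

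The reverse inequality $A\ls B_1$ is the easier direction. The first term of $A'$ is dominated pointwise in $k$ by the $k$-th summand of $B_1$ thanks to the trivial bound $\underline v(t,x_k)\le\underline v(t,\i)$. For the second term of $A'$, I would use $\underline v(t,\i)\ge\underline v(x_k,\i)$ for every $t<x_k$ and extract the factor $(U(x_k)-U(x_{k-1}))^{1/p}\ap U(x_k)^{1/p}$ from the supremum; the $k$-th summand of $B_1$ is then at least a constant times $\vp(x_k)^{1/q}\underline v(x_k,\i)$, and the $\ell^\rho$-norm recovers the second term of $A'$.

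The only genuine subtlety is bookkeeping in the quasi-Banach range $\rho<1$ (which arises when $q<\tfrac12$, since $1/\rho=(1/q-1)_+$), where only a $p$-triangle inequality in $\ell^\rho$ is available, with multiplicative constant depending on $\rho$; these constants are harmless as they depend only on inessential parameters. All the remaining manipulations are pointwise in $k$ and rest solely on the monotonicity of $\underline v(\cdot,\i)$ and on the geometric growth of $\{U(x_k)\}$ built into the discretizing sequence.
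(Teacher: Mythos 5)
Your proof is correct and follows essentially the same route as the paper's: substitute the supremum form of $B(x_{k-1},x_k)$ from Lemma~\ref{lem.2.800}(a) into the expression for $A$ from Lemma~\ref{lem14357861}, use the splitting $\underline v(t,\infty)\le\underline v(t,x_k)+\underline v(x_k,\infty)$ (equivalently your max identity) for $B_1\lesssim A$, and use the monotonicity of $\underline v(\cdot,\infty)$ together with $(U(x_k)-U(x_{k-1}))^{1/p}\approx U(x_k)^{1/p}$ for $A\lesssim B_1$, with Remark~\ref{remconv} handling the passage between $\underline v(x_k,x_{k+1})$ and $\underline v(x_k,\infty)$.
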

\begin{proof}
By Lemma \ref{lem.2.800}, in this case we find that
\begin{equation*}
B(x_{k-1},x_k) \ap \sup_{x_{k-1}<t<x_k} \left(\int_{x_{k-1}}^t
u(s)\,ds\right)^{\frac{1}{p}} \underline{v}(t,x_k).
\end{equation*}
By using \eqref{Ck}, in view of Lemma \ref{lem14357861}, we have
that
\begin{align*}
A  \ap & \left\| \left\{
\frac{\vp(x_k)^{\frac{1}{q}}}{U(x_k)^{\frac{1}{p}}} \left(
\sup_{x_{k-1}<t<x_k} \left( \int_{x_{k-1}}^t
u(s)\,ds\right)^{\frac{1}{p}} \underline{v}(t,x_k) \right)\right\}
\right\|_{\ell^{\rho}(\Z)} \notag\\
& + \left\|\left\{ \vp(x_k)^{\frac{1}{q}}
\underline{v}(x_k,x_{k+1})\right\}\right\|_{\ell^{\rho}(\Z)}.
\end{align*}
Obviously,
\begin{align*}
B_1 \ls & \left\| \left\{
\frac{\vp(x_k)^{\frac{1}{q}}}{U(x_k)^{\frac{1}{p}}} \left(
\sup_{x_{k-1}<t<x_k} \left( \int_{x_{k-1}}^t
u(s)\,ds\right)^{\frac{1}{p}} \underline{v}(t,x_k) \right)\right\}
\right\|_{\ell^{\rho}(\Z)} \notag \\
& + \left\| \left\{
\frac{\vp(x_k)^{\frac{1}{q}}}{U(x_k)^{\frac{1}{p}}}
\underline{v}(x_k,\i)\left( \sup_{x_{k-1}<t<x_k} \left(
\int_{x_{k-1}}^t u(s)\,ds\right)^{\frac{1}{p}}  \right)\right\}
\right\|_{\ell^{\rho}(\Z)}
\notag \\
= & \left\| \left\{
\frac{\vp(x_k)^{\frac{1}{q}}}{U(x_k)^{\frac{1}{p}}} \left(
\sup_{x_{k-1}<t<x_k} \left( \int_{x_{k-1}}^t
u(s)\,ds\right)^{\frac{1}{p}} \underline{v}(t,x_k) \right)\right\}
\right\|_{\ell^{\rho}(\Z)} \notag \\
& + \left\| \left\{
\frac{\vp(x_k)^{\frac{1}{q}}}{U(x_k)^{\frac{1}{p}}}
\underline{v}(x_k,\i)\left( \int_{x_{k-1}}^{x_k} u(s)\,ds
\right)^{\frac{1}{p}}\right\} \right\|_{\ell^{\rho}(\Z)}
\notag \\
\ap & \left\| \left\{
\frac{\vp(x_k)^{\frac{1}{q}}}{U(x_k)^{\frac{1}{p}}} \left(
\sup_{x_{k-1}<t<x_k} \left( \int_{x_{k-1}}^t
u(s)\,ds\right)^{\frac{1}{p}} \underline{v}(t,x_k) \right)\right\}
\right\|_{\ell^{\rho}(\Z)} \notag \\
& + \left\| \left\{ \vp(x_k)^{\frac{1}{q}}
\underline{v}(x_k,\i)\right\} \right\|_{\ell^{\rho}(\Z)}
\notag \\
\ap & \left\| \left\{
\frac{\vp(x_k)^{\frac{1}{q}}}{U(x_k)^{\frac{1}{p}}} \left(
\sup_{x_{k-1}<t<x_k} \left( \int_{x_{k-1}}^t
u(s)\,ds\right)^{\frac{1}{p}} \underline{v}(t,x_k) \right)\right\}
\right\|_{\ell^{\rho}(\Z)} \notag \\
& + \left\| \left\{ \vp(x_k)^{\frac{1}{q}}
\underline{v}(x_k,x_{k+1})\right\} \right\|_{\ell^{\rho}(\Z)} = A.
\end{align*}
On the other hand,
\begin{align*}
A \ap & \left\| \left\{
\frac{\vp(x_k)^{\frac{1}{q}}}{U(x_k)^{\frac{1}{p}}} \left(
\sup_{x_{k-1}<t<x_k} \left( \int_{x_{k-1}}^t
u(s)\,ds\right)^{\frac{1}{p}} \underline{v}(t,x_k) \right)\right\}
\right\|_{\ell^{\rho}(\Z)} \notag \\
& + \left\| \left\{
\frac{\vp(x_k)^{\frac{1}{q}}}{U(x_k)^{\frac{1}{p}}}
\underline{v}(x_k,x_{k+1})\left( \sup_{x_{k-1}<t<x_k} \left(
\int_{x_{k-1}}^t u(s)\,ds\right)^{\frac{1}{p}}  \right)\right\}
\right\|_{\ell^{\rho}(\Z)}
\notag \\
\ls & \left\| \left\{
\frac{\vp(x_k)^{\frac{1}{q}}}{U(x_k)^{\frac{1}{p}}} \left(
\sup_{x_{k-1}<t<x_k} \left( \int_{x_{k-1}}^t
u(s)\,ds\right)^{\frac{1}{p}} \underline{v}(t,\i) \right)\right\}
\right\|_{\ell^{\rho}(\Z)} : = B_1.
\end{align*}
\end{proof}

\begin{lem}\label{lem3.9}
Assume that the conditions of Lemma \ref{lem3.8} are fulfilled. Then
$$
B_1 \ap B_2,
$$
where
\begin{align*}
B_2  : =  \left\| \left\{
\frac{\vp(x_k)^{\frac{1}{q}}}{U(x_k)^{\frac{1}{p}}} \left(
\sup_{x_{k-1}<t<x_k} U(t)^{\frac{1}{p}} \underline{v}(t,\i)
\right)\right\} \right\|_{\ell^{\rho}(\Z)}.
\end{align*}
\end{lem}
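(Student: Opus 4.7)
The plan is to prove $B_1 \ap B_2$ by establishing the two inclusions separately. The inequality $B_1 \le B_2$ is immediate: since $\int_{x_{k-1}}^t u(s)\,ds \le U(t)$ for every $t>x_{k-1}$, each term of $B_1$ is dominated pointwise by the corresponding term of $B_2$.

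For the reverse estimate $B_2 \ls B_1$, the key identity is $U(t) = U(x_{k-1}) + \int_{x_{k-1}}^t u(s)\,ds$. Since $p \ge 1$, the subadditivity of $x\mapsto x^{1/p}$ on $[0,\i)$ gives $U(t)^{1/p} \le U(x_{k-1})^{1/p} + \bigl(\int_{x_{k-1}}^t u\bigr)^{1/p}$. Multiplying by $\underline{v}(t,\i)$, taking the supremum over $t\in(x_{k-1},x_k)$, and using that $\underline{v}(\cdot,\i)$ is non-increasing (so its supremum on $(x_{k-1},x_k)$ is at most $\underline{v}(x_{k-1},\i)$), one arrives at
$$B_2 \ls B_1 + T, \qquad T := \left\|\left\{\frac{\vp(x_k)^{\frac{1}{q}}}{U(x_k)^{\frac{1}{p}}}\,U(x_{k-1})^{\frac{1}{p}}\,\underline{v}(x_{k-1},\i)\right\}\right\|_{\ell^{\rho}(\Z)}.$$

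The remaining task is to absorb $T$ into $B_1$. I would invoke the discretizing condition $\vp(x_k)U(x_k)^{-q/p}\dd$, which produces some $\theta\in(0,1)$ such that $\vp(x_k)/\vp(x_{k-1}) \le \theta\,(U(x_k)/U(x_{k-1}))^{q/p}$, and hence $\vp(x_k)^{1/q}\,U(x_{k-1})^{1/p}/U(x_k)^{1/p} \ls \vp(x_{k-1})^{1/q}$. A shift of index then yields $T \ls \|\{\vp(x_k)^{1/q}\underline{v}(x_k-,\i)\}\|_{\ell^{\rho}(\Z)}$. To conclude, I would lower-bound $B_1$ by choosing $t\to x_{k+1}-$ in the supremum defining its $(k+1)$-th term and using $U(x_{k+1})-U(x_k)\ap U(x_{k+1})$ (a consequence of $U(x_k)\uu$); this yields
$$\frac{\vp(x_{k+1})^{1/q}}{U(x_{k+1})^{1/p}}\sup_{x_k<t<x_{k+1}}\Bigl(\int_{x_k}^t u\Bigr)^{1/p}\underline{v}(t,\i) \gs \vp(x_{k+1})^{1/q}\underline{v}(x_{k+1}-,\i),$$
so that after one further shift of index the sequence $\{\vp(x_k)^{1/q}\underline{v}(x_k-,\i)\}$ is dominated in $\ell^{\rho}(\Z)$ by the sequence defining $B_1$. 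This gives $T \ls B_1$ and closes the reverse inclusion.

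The main obstacle is the spurious term $T$ created by the subadditivity step. Absorbing it cleanly requires using \emph{both} geometric features of the discretizing sequence, namely the decrease of $\{\vp(x_k)U(x_k)^{-q/p}\}$ (to trade the factor $U(x_{k-1})^{1/p}/U(x_k)^{1/p}$ for a shift of $\vp$) and the increase of $\{U(x_k)\}$ (to identify the resulting sequence as a lower bound of $B_1$ by localising the inner supremum near $t=x_{k+1}$), all while keeping two successive shifts of index consistent.
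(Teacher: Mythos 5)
Your proof is correct and takes essentially the same route as the paper: the easy direction by $\int_{x_{k-1}}^t u \le U(t)$, and the reverse by $p\ge 1$ subadditivity of $x\mapsto x^{1/p}$ applied to $U(t)=U(x_{k-1})+\int_{x_{k-1}}^t u$, with the spurious term absorbed via the monotonicity $\vp(x_k)U(x_k)^{-q/p}\dd$ and then bounded by the lower estimate $\vp(x_k)^{1/q}\underline{v}(x_k-,\i)\ls$ the $k$-th term of $B_1$ (which follows from $U(x_k)-U(x_{k-1})\ap U(x_k)$). The paper records exactly this last lower bound as the separate display \eqref{eq245-7682=9045687} before running the subadditivity chain, whereas you prove it inline afterwards and formulate it for the $(k+1)$-th term rather than the $k$-th; the index shift cancels and the arguments are the same.
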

\begin{proof}
Obviously,
$$
B_1 \le B_2.
$$
Since
\begin{align}
\left\| \left\{ \vp(x_k)^{\frac{1}{q}} \underline{v}(x_k,\i)
\right\}
\right\|_{\ell^{\rho}(\Z)} & \notag \\
& \hspace{-3cm} \ap \left\| \left\{
\frac{\vp(x_k)^{\frac{1}{q}}}{U(x_k)^{\frac{1}{p}}}\underline{v}(x_k,\i)
\left( \int_{x_{k-1}}^{x_k} u(s)\,ds  \right)^{\frac{1}{p}}\right\}
\right\|_{\ell^{\rho}(\Z)} \notag \\
& \hspace{-3cm}  = \left\| \left\{
\frac{\vp(x_k)^{\frac{1}{q}}}{U(x_k)^{\frac{1}{p}}}\underline{v}(x_k,\i)
\sup_{x_{k-1} < t < x_k}\left( \int_{x_{k-1}}^t u(s)\,ds
\right)^{\frac{1}{p}}\right\}
\right\|_{\ell^{\rho}(\Z)} \notag \\
& \hspace{-3cm}  \le \left\| \left\{
\frac{\vp(x_k)^{\frac{1}{q}}}{U(x_k)^{\frac{1}{p}}} \sup_{x_{k-1} <
t < x_k}\left( \int_{x_{k-1}}^t u(s)\,ds
\right)^{\frac{1}{p}}\underline{v}(t,\i)\right\}
\right\|_{\ell^{\rho}(\Z)} = B_1, \label{eq245-7682=9045687}
\end{align}
we obtain that
\begin{align*}
B_2 \le & B_1 + \left\| \left\{
\frac{\vp(x_k)^{\frac{1}{q}}}{U(x_k)^{\frac{1}{p}}}
U(x_{k-1})^{\frac{1}{p}}\sup_{x_{k-1} < t <
x_k}\underline{v}(t,\i)\right\}
\right\|_{\ell^{\rho}(\Z)} \notag \\
= & B_1 + \left\| \left\{
\frac{\vp(x_k)^{\frac{1}{q}}}{U(x_k)^{\frac{1}{p}}}
U(x_{k-1})^{\frac{1}{p}}\underline{v}(x_{k-1},\i)\right\}
\right\|_{\ell^{\rho}(\Z)} \notag \\
\ls & B_1 + \left\| \left\{
\vp(x_{k-1})^{\frac{1}{q}}\underline{v}(x_{k-1},\i)\right\}
\right\|_{\ell^{\rho}(\Z)} \notag \\
= & B_1  + \left\| \left\{
\vp(x_k)^{\frac{1}{q}}\underline{v}(x_k,\i)\right\}
\right\|_{\ell^{\rho}(\Z)} \ls B_1.
\end{align*}
\end{proof}

\begin{lem}\label{lem3.90}
Assume that the conditions of Lemma \ref{lem3.8} are fulfilled. Then
$$
B_2 \ap B_3,
$$
where
\begin{align*}
B_3  : =  \left\| \left\{
\frac{\vp(x_k)^{\frac{1}{q}}}{U(x_k)^{\frac{1}{p}}} \left(
\sup_{x_{k-1}<t<x_k} U(t)^{\frac{1}{p}} \underline{v}(t,\i)
\right)\right\} \right\|_{\ell^{\rho}(\Z)}  + \left\| \left\{
\vp(x_k)^{\frac{1}{q}} \underline{v}(x_k,\i)\right\}
\right\|_{\ell^{\rho}(\Z)}.
\end{align*}
\end{lem}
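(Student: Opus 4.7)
The plan is to prove the equivalence $B_2 \ap B_3$ by showing the two inequalities $B_2 \le B_3$ and $B_3 \ls B_2$ separately. The first inequality is immediate, since $B_3$ is just $B_2$ plus a non-negative additional term; no work is required.

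For the reverse inequality $B_3 \ls B_2$, the task reduces to controlling the extra summand
\[
S := \left\| \left\{ \vp(x_k)^{\frac{1}{q}} \underline{v}(x_k,\i)\right\} \right\|_{\ell^{\rho}(\Z)}
\]
by $B_2$. The key observation is that this exact bound has already been obtained inside the proof of Lemma \ref{lem3.9}: the chain of inequalities \eqref{eq245-7682=9045687} established precisely that $S \le B_1$. Combined with the equivalence $B_1 \ap B_2$ furnished by Lemma \ref{lem3.9}, this immediately gives $S \ls B_2$, and hence $B_3 = B_2 + S \ls B_2$.

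I would therefore structure the proof as a very short argument: first remark that $B_2 \le B_3$ trivially. Then invoke \eqref{eq245-7682=9045687} to write
\[
\left\| \left\{ \vp(x_k)^{\frac{1}{q}} \underline{v}(x_k,\i)\right\} \right\|_{\ell^{\rho}(\Z)} \le B_1,
\]
and apply Lemma \ref{lem3.9} to conclude that this is $\ap B_2$. Adding $B_2$ to both sides yields $B_3 \ls B_2$.

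There is no genuine obstacle here; the lemma is essentially a bookkeeping step that isolates the quantity $\|\vp(x_k)^{1/q}\underline{v}(x_k,\i)\|_{\ell^\rho(\Z)}$ from $B_2$ so that it can be used independently in the subsequent anti-discretization step (analogous to how $A_3$ was split out in Lemma \ref{lem3.5}). The only thing to keep in mind when writing the proof is to cite the appropriate intermediate inequality from the previous lemma rather than re-deriving it, which would duplicate the geometric-monotonicity argument on $\{\vp(x_k)^{1/q}\}$ and the equivalence $U(x_k) - U(x_{k-1}) \ap U(x_k)$ already used there.
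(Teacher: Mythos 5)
Your proposal is correct and follows essentially the same route as the paper: the forward inequality $B_2 \le B_3$ is immediate, and the extra term $\bigl\|\{\vp(x_k)^{1/q}\underline{v}(x_k,\i)\}\bigr\|_{\ell^\rho(\Z)}$ is controlled by invoking \eqref{eq245-7682=9045687} to bound it by $B_1$ and then the conclusion $B_1\ap B_2$ of Lemma~\ref{lem3.9}. This is exactly the paper's argument, stated perhaps a touch more explicitly.
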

\begin{proof}
Obviously,
$$
B_2 \le B_3.
$$
On the other hand, by \eqref{eq245-7682=9045687}, we get that
\begin{align*}
\left\| \left\{ \vp(x_k)^{\frac{1}{q}}
\underline{v}(x_k,x_{k+1})\right\} \right\|_{\ell^{\rho}(\Z)} \ap
\left\| \left\{ \vp(x_k)^{\frac{1}{q}} \underline{v}(x_k,\i)
\right\} \right\|_{\ell^{\rho}(\Z)} \ls B_1 \ls B_2.
\end{align*}
Thus
\begin{align*}
B_3  : =  B_2  + \left\| \left\{ \vp(x_k)^{\frac{1}{q}}
\underline{v}(x_k,\i)\right\} \right\|_{\ell^{\rho}(\Z)} \ls B_2.
\end{align*}
\end{proof}

\begin{lem}\label{lem3.900}
Assume that the conditions of Lemma \ref{lem3.8} are fulfilled. Then
$$
B_3 \ap B_4,
$$
where
\begin{align*}
B_4 : = \left\| \left\{ \vp(x_k)^{\frac{1}{q}} \left( \sup_{t \in
(0,\i)} {\mathcal U}(t,x_k)^{\frac{1}{p}}
\underline{v}(t,\i)\right)\right\} \right\|_{\ell^{\rho}(\Z)}.
\end{align*}
\end{lem}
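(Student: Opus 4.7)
The plan is to prove the two inequalities $B_3 \ls B_4$ and $B_4 \ls B_3$ separately. The first direction comes from direct pointwise comparison of $(U(t)/U(x_k))^{1/p}$ with $\mathcal{U}(t,x_k)^{1/p}$ near and at $t = x_k$. The second direction requires decomposing the global supremum over $t \in (0,\i)$ inside $B_4$ into the pieces $t \le x_k$ and $t > x_k$, and then invoking Lemma \ref{lem.2.3} together with the monotonicity properties of the discretizing sequence.

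For $B_3 \ls B_4$, note that for $t \in (x_{k-1}, x_k)$ one has $U(t) \le U(x_k)$ and hence $U(t) + U(x_k) \le 2 U(x_k)$, which yields $\mathcal{U}(t, x_k) = U(t)/(U(t) + U(x_k)) \ge U(t)/(2 U(x_k))$, equivalently $(U(t)/U(x_k))^{1/p} \le 2^{1/p} \mathcal{U}(t, x_k)^{1/p}$. Multiplying by $\underline{v}(t,\i)$ and taking the supremum over $t \in (x_{k-1}, x_k) \subset (0,\i)$ shows that the first summand of $B_3$ at index $k$ is bounded by $2^{1/p}$ times the integrand of the $\ell^{\rho}$-norm in $B_4$. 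For the second summand of $B_3$, I would use $\mathcal{U}(x_k, x_k) = 1/2$ to write $\underline{v}(x_k,\i) = 2^{1/p} \mathcal{U}(x_k, x_k)^{1/p} \underline{v}(x_k,\i) \le 2^{1/p} \sup_{t \in (0,\i)} \mathcal{U}(t, x_k)^{1/p} \underline{v}(t,\i)$. Taking $\ell^{\rho}$-norms then gives $B_3 \ls B_4$.

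For $B_4 \ls B_3$, split $\sup_{t \in (0,\i)} \mathcal{U}(t, x_k)^{1/p} \underline{v}(t,\i)$ into a supremum over $t \in (0, x_k]$ plus one over $t > x_k$. On $(x_k,\i)$, since $\mathcal{U}(t, x_k) \le 1$ and $\underline{v}(\cdot,\i)$ is non-increasing, this piece is bounded by $\underline{v}(x_k,\i)$, and multiplication by $\vp(x_k)^{1/q}$ followed by the $\ell^{\rho}$-norm produces exactly the second summand of $B_3$. On $(0, x_k]$, the trivial bound $U(t) + U(x_k) \ge U(x_k)$ gives $\mathcal{U}(t, x_k)^{1/p} \le (U(t)/U(x_k))^{1/p}$, so this piece is bounded by $U(x_k)^{-1/p} \sup_{m \le k} a_m$, where $a_m := \sup_{t \in (x_{m-1}, x_m]} U(t)^{1/p} \underline{v}(t,\i)$.

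The crucial observation is that the weight $\tau_k := \vp(x_k)^{1/q}/U(x_k)^{1/p}$ is geometrically decreasing: by property (ii) of the discretizing sequence for $\vp$ with respect to $U^{q/p}$, the sequence $\vp(x_k)/U(x_k)^{q/p}$ is geometrically decreasing, and taking the $1/q$-th power preserves this. Lemma \ref{lem.2.3} then yields $\|\tau_k \sup_{m \le k} a_m\|_{\ell^{\rho}(\Z)} \ap \|\tau_k a_k\|_{\ell^{\rho}(\Z)}$, whose right-hand side is precisely the first summand of $B_3$. Adding this to the bound from the $(x_k,\i)$-piece gives $B_4 \ls B_3$. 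The main obstacle is recognizing the correct decomposition of the global supremum and identifying the geometrically decreasing weight to which Lemma \ref{lem.2.3} applies.
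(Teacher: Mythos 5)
Your argument is correct and uses exactly the same ingredients as the paper's proof: the pointwise equivalence ${\mathcal U}(t,x_k)^{1/p}\ap (U(t)/U(x_k))^{1/p}$ for $t\le x_k$ and ${\mathcal U}(t,x_k)^{1/p}\ap 1$ for $t\ge x_k$, the split of $\sup_{t\in(0,\i)}$ at $t=x_k$, and Lemma~\ref{lem.2.3} applied to the geometrically decreasing weight $\vp(x_k)^{1/q}U(x_k)^{-1/p}$. The paper merely packages these as one chain of equivalences rather than two separate inequalities, so the approaches are essentially identical.
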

\begin{proof}
By Lemma \ref{lem.2.3}, we get that
\begin{align*}
B_3  \ap  & \left\| \left\{
\frac{\vp(x_k)^{\frac{1}{q}}}{U(x_k)^{\frac{1}{p}}} \left(
\sup_{0<t<x_k} U(t)^{\frac{1}{p}} \underline{v}(t,\i)
\right)\right\}
\right\|_{\ell^{\rho}(\Z)} \notag\\
& + \left\| \left\{ \vp(x_k)^{\frac{1}{q}} \left(\sup_{x_k < t <
\i}\underline{v}(t,\i)\right)\right\} \right\|_{\ell^{\rho}(\Z)} \notag\\
\ap  & \left\| \left\{ \vp(x_k)^{\frac{1}{q}} \left( \sup_{0<t<x_k}
{\mathcal U}(t,x_k)^{\frac{1}{p}} \underline{v}(t,\i)
\right)\right\}
\right\|_{\ell^{\rho}(\Z)} \notag\\
& + \left\| \left\{ \vp(x_k)^{\frac{1}{q}} \left( \sup_{x_k < t <
\i}{\mathcal U}(t,x_k)^{\frac{1}{p}}
\underline{v}(t,\i)\right)\right\}
\right\|_{\ell^{\rho}(\Z)} \notag\\
\ap & \left\| \left\{ \vp(x_k)^{\frac{1}{q}} \left(\sup_{t \in
(0,\i)} {\mathcal
U}(t,x_k)^{\frac{1}{p}}\underline{v}(t,\i)\right)\right\}
\right\|_{\ell^{\rho}(\Z)} = B_4.
\end{align*}
\end{proof}

Our next main result reads:
\begin{thm}\label{main1002}
Let $1 \le p < \i$, $0<q<\i$, and let $u,\,v,\,w$ be weights. Assume
that $u$ is such that $U$ is admissible and the measure $w(t)dt$ is
non-degenerate with respect to $U^{\frac{q}{p}}$.

{\rm (i)} Let $1\leq q <\i$. Then inequality \eqref{eq.4.1} holds
for every $h \in \mp^+(0,\i)$ if and only if
$$
I_3:=\sup_{x>0}\left(\int_0^{\i}{\mathcal
U}(x,t)^{\frac{q}{p}}w(t)dt\right)^{\frac{1}{q}}
{U(x)}^{-\frac{1}{p}}\sup_{t \in
(0,x)}{U(t)}^{\frac{1}{p}}{\underline{v}(t,\i)}<\i.
$$
Moreover, the best constant $c$ in \eqref{eq.4.1} satisfies that
$c\ap I_3$.

{\rm (ii)} Let $0<q<1$. Then inequality \eqref{eq.4.1} holds for
every $h \in \mp^+(0,\i)$ if and only if
\begin{gather*}
I_4:=\left(\int_0^{\i} \left(\int_0^{\i}{\mathcal
U}(x,t)^{\frac{q}{p}}w(t)dt\right)^{q^*} U(x)^{-\frac{q^*}{p}}
\left(\sup_{t\in(0,x)}U(t)^{\frac{q^*}{p}}{\underline{v}(t,\i)^{q^*}}\right){w(x)}\,dx\right)^{\frac{1}{q^*}}<\i.
\end{gather*}
Moreover, the best constant $c$ in \eqref{eq.4.1} satisfies that
$c\ap I_4$.
\end{thm}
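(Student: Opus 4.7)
\textbf{Proof plan for Theorem \ref{main1002}.} The plan is to route the best constant $c$ in \eqref{eq.4.1} through the chain of equivalences already built in Sections \ref{mr}--\ref{Antidiscretization} and then anti-discretize. By Lemma \ref{lem14357861} and the successive equivalences $A \ap B_1 \ap B_2 \ap B_3 \ap B_4$ proved in Lemmas \ref{lem3.8}--\ref{lem3.900}, we have $c \ap B_4$, where
$$B_4 = \bigl\|\{\vp(x_k)^{1/q}\, G(x_k)\}\bigr\|_{\ell^{\rho}(\Z)}, \qquad G(x) := \sup_{t \in \I} \mathcal{U}(t,x)^{1/p}\,\underline{v}(t,\i),$$
and $\rho = \i$ in case (i), $\rho = q^*$ in case (ii). It therefore suffices to show $B_4 \ap I_3$ in case (i) and $B_4 \ap I_4$ in case (ii).

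The structural key is a convenient decomposition of $G$. Splitting the supremum at $t = x$ and using $U(t) + U(x) \ap \max\{U(t),U(x)\}$, one checks the elementary equivalence
$$G(x) \ap U(x)^{-1/p} \sup_{t \in (0,x)} U(t)^{1/p}\,\underline{v}(t,\i) \qquad (x \in \I).$$
Consequently $\psi(x) := U(x)^{1/p}\,G(x)$ is, up to equivalence, exactly the majorant appearing in Lemma \ref{cor.2.0} applied with $w$ replaced by $\underline{v}(\cdot,\i)$; in particular $\psi$ is $U^{1/p}$-quasiconcave. Setting $\s(x) := G(x)^{-p} \ap U(x)/\psi(x)^p$, the quasiconcavity of $\psi$ translates directly into $\s \in \O_U$: indeed $\s/U = 1/\psi^p$ is equivalent to decreasing because $\psi$ is equivalent to increasing, while $\s$ itself is equivalent to increasing because $\psi^p/U = (\psi/U^{1/p})^p$ is equivalent to decreasing. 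Non-degeneracy at $0$ and $\i$ follows from the assumption $\lim_{t\to\i}\underline{v}(t,\i) = 0$ of Remark \ref{remconv}, together with the admissibility of $U$.

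For case (i), since $\rho = \i$, raising $B_4$ to the $p$-th power gives $B_4^p = \sup_k \vp(x_k)^{p/q}/\s(x_k)$. With $\vp \in \O_{U^{q/p}}$ and $\s \in \O_U$, Lemma \ref{lem.2.2} applied with the lemma's parameters replaced by $q \rightsquigarrow q/p$ and $p \rightsquigarrow 1$ yields
$$B_4^p \ap \sup_{x \in \I} \frac{\vp(x)^{p/q}}{\s(x)} = \sup_{x \in \I}\bigl(\vp(x)^{1/q}\,G(x)\bigr)^p;$$
extracting $p$-th roots and substituting the explicit forms of $\vp$ and $G$ delivers $B_4 \ap I_3$. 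For case (ii) we invoke Theorem \ref{thm.2.1} with $\nu = w(t)\,dt$ and the theorem's parameters chosen as $p \rightsquigarrow 1$, $q \rightsquigarrow q/p$, $r := q^*/p$. The arithmetic is exactly what is needed: $r/q = q^*/q$, $r/q - 1 = 1/(1-q) - 1 = q^*$, and $r/p = q^*/p$, so that Theorem \ref{thm.2.1} produces
$$B_4^{q^*} = \sum_{k \in \Z} \vp(x_k)^{q^*/q}\,\s(x_k)^{-q^*/p} \ap \int_{[0,\i)} \vp(x)^{q^*}\,\s(x)^{-q^*/p}\,w(x)\,dx;$$
using $\s^{-q^*/p} = G^{q^*}$ and the decomposition of $G$ now yields $B_4 \ap I_4$.

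The main obstacle is the verification of $\s \in \O_U$, including the four non-degeneracy limits; once this is in place, everything else is a bookkeeping exercise of matching exponents between Lemma \ref{lem.2.2} / Theorem \ref{thm.2.1} and the target quantities $I_3$, $I_4$. Quasiconcavity of $\s$ rests on the identification of $\psi$ via Lemma \ref{cor.2.0}, while the non-degeneracy is where Remark \ref{remconv} enters substantively.
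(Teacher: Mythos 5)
Your proof takes essentially the same route as the paper: the paper's own proof is precisely "combine Lemmas \ref{lem3.8}--\ref{lem3.900} with Lemma \ref{lem.2.2} and Lemma \ref{cor.2.0}" for part (i), and "with Lemma \ref{cor.2.0} and Theorem \ref{thm.2.1}" for part (ii), and your chain $c \ap A \ap B_1 \ap \dots \ap B_4$ followed by anti-discretization via Lemma \ref{lem.2.2} (resp.\ Theorem \ref{thm.2.1}) is exactly that. The exponent bookkeeping in both cases checks out, and the decomposition $G(x)\ap U(x)^{-1/p}\sup_{t\in(0,x)}U(t)^{1/p}\underline{v}(t,\i)$ is the right reduction.

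Two small inaccuracies worth flagging. First, the identification with Lemma \ref{cor.2.0} is slightly misstated: with $w$ replaced by $\underline{v}(\cdot,\i)$ that lemma produces $\sup_s\underline{v}(s,\i)\,\mathcal{U}(x,s)^{1/p}$, which is not $\psi(x)=U(x)^{1/p}G(x)=U(x)^{1/p}\sup_t\mathcal{U}(t,x)^{1/p}\underline{v}(t,\i)$. The correct identification is that $\psi$ is the least $U^{1/p}$-quasiconcave majorant of $U^{1/p}\underline{v}(\cdot,\i)$, since $U(s)^{1/p}\underline{v}(s,\i)\bigl(\tfrac{U(x)}{U(s)+U(x)}\bigr)^{1/p}=U(x)^{1/p}\underline{v}(s,\i)\,\mathcal{U}(s,x)^{1/p}$. (Your direct verification of $U^{1/p}$-quasiconcavity of $\psi$ is fine and is what actually carries the argument, so this is only an attribution slip.) Second, the claim that all four non-degeneracy limits for $\sigma\in\O_U$ "follow from Remark \ref{remconv} and admissibility of $U$" is too strong: Remark \ref{remconv} only yields $\lim_{t\to\i}\underline{v}(t,\i)=0$, whereas the limits $\lim_{t\to 0+}\sigma(t)=0$ and $\lim_{t\to\i}\sigma(t)/U(t)=0$ require in addition $\underline{v}(0+,\i)=\i$ and $\sup_{s}U(s)^{1/p}\underline{v}(s,\i)=\i$, neither of which is automatic. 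The paper's own proof is equally terse on this point, so this is a shared gap rather than a defect peculiar to your argument, but it should not be asserted as a consequence of Remark \ref{remconv}.
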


\begin{proof}
{\rm (i)} The proof of the statement follows by combining Lemmas
\ref{lem3.8} - \ref{lem3.900}, \ref{lem.2.2} and \ref{cor.2.0}.

{\rm (ii)} The proof of the statement follows by using Lemmas
\ref{lem3.8} - \ref{lem3.900}, \ref{cor.2.0} and Theorem
\ref{thm.2.1}.
\end{proof}

\

\noindent{\bf The case $0 < p < \i$, $q = \i$.} The following
lemma is true.
\begin{lem}\label{lem3.10}
Let $0 < p<\i$ and let $u,\,v,\,w$ be weights. Assume that $u$ is
such that $U$ is admissible. Let $\vp$, defined by \eqref{eq.2.0},
be non-degenerate with respect to $U^{\frac{1}{p}}$. Let $\{x_k\}$
be any discretizing sequence for $\vp$.

{\rm (i)} If $0 < p < 1$, then
\begin{align*}
D  \ap &  \left\| \left\{ \vp(x_k) \left(\int_{[0,\i)} {\mathcal
U}(t,x_k)^{\frac{p^*}{p}}d \left( -
\underline{v}(t-,\i)^{p^*}\right) \right)^{\frac{1}{p^*}}\right\}
\right\|_{\ell^{\i}(\Z)}.
\end{align*}

{\rm (ii)} If $1 \le p < \i$, then
\begin{align*}
D  \ap &  \left\| \left\{ \vp(x_k) \left( \sup_{t \in (0,\i)}
{\mathcal U}(t,x_k)^{\frac{1}{p}} \underline{v}(t,\i)\right)\right\}
\right\|_{\ell^{\i}(\Z)}.
\end{align*}
\end{lem}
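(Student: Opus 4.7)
The plan is to follow exactly the same chain of equivalences as in Lemmas \ref{lem3.3}--\ref{lem3.60} for part (i) and Lemmas \ref{lem3.8}--\ref{lem3.900} for part (ii), with the only changes being the substitutions $\vp(x_k)^{1/q} \mapsto \vp(x_k)$ and $\ell^{\rho}(\Z) \mapsto \ell^{\i}(\Z)$. This substitution is legitimate because Lemma \ref{lem.2.3}, which drives every step of those chains, is valid for $q\in (0,+\i]$, in particular for $q=\i$, and because the discretizing sequence supplied by Lemma \ref{lem1435786143515} still yields $\{\vp(x_k)\}\uu$ and $\{\vp(x_k)/U(x_k)^{1/p}\}\dd$, which are the only monotonicity properties used in the reductions.

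For part (i), I would first invoke Lemma \ref{lem.2.800}(b) to replace $B(x_{k-1},x_k)$ inside the first sup of $D$ by the local expression $\big(\int_{x_{k-1}}^{x_k}(\int_{x_{k-1}}^tu)^{p^*}u(t)\underline{v}(t,x_k)^{p^*}dt\big)^{1/p^*}$. Combining this with \eqref{Ck} for the second term gives the analogue of $A$. Then I would carry out the sup-analogues of the four reduction steps:

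\begin{itemize}
\item[(a)] Extend $\underline{v}(t,x_k)$ to $\underline{v}(t,\i)$ by monotonicity, showing that the boundary piece $\|\{\vp(x_k)\underline{v}(x_k,x_{k+1})\}\|_{\ell^{\i}(\Z)}$ can be absorbed (this is the $\ell^{\i}$ version of the passage $A\ap A_1$).
\item[(b)] Use integration by parts \eqref{integration.by.parts} to pass from the $dt$-integral to a $d(-\underline{v}(t-,\i)^{p^*})$-integral against $U(t)^{p^*/p}$, up to the boundary term $\|\{\vp(x_k)\underline{v}(x_k-,\i)\}\|_{\ell^\infty(\Z)}$, which, exactly as in \eqref{eq2-568259682=}, is dominated by the previous quantity. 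This is the $\ell^\infty$ analogue of $A_1\ap A_2\ap A_3$.
\item[(c)] Split the $k$-interval into $[0,x_k)$ and $[x_k,\i)$ and use $U(t)^{1/p}\ap U(x_k)^{1/p}$ on the tail piece (via Lemma \ref{lem.2.3} with $q=\i$) to rewrite the two pieces in terms of ${\mathcal U}(t,x_k)^{p^*/p}$ integrated against $d(-\underline{v}(t-,\i)^{p^*})$ over $[0,x_k)$ and $[x_k,\i)$ respectively. Merging these yields the claimed expression over $[0,\i)$, completing the analogue of $A_4\ap A_5$.
\end{itemize}

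For part (ii), I would proceed identically but starting from Lemma \ref{lem.2.800}(a), which gives $B(x_{k-1},x_k)\ap \sup_{x_{k-1}<t<x_k}(\int_{x_{k-1}}^tu)^{1/p}\underline{v}(t,x_k)$, and then follow the three-step chain of Lemmas \ref{lem3.8}--\ref{lem3.900}: enlarge $\underline{v}(t,x_k)$ to $\underline{v}(t,\i)$, enlarge $\int_{x_{k-1}}^t u$ to $U(t)$ modulo an $\ell^\infty$ boundary term of the form $\|\{\vp(x_k)\underline{v}(x_k,\i)\}\|_{\ell^\infty(\Z)}$ handled as in \eqref{eq245-7682=9045687}, and finally split the sup into $(0,x_k)$ and $(x_k,\i)$ to write the resulting quantity as $\sup_{t\in(0,\i)}{\mathcal U}(t,x_k)^{1/p}\underline{v}(t,\i)$.

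The main obstacle is purely bookkeeping: one must verify that at each reduction the error (or boundary) term produced is itself bounded by a later expression, so that the chain of $\ap$-equivalences closes. This was already done in the $\ell^{\rho}$ arguments, and since Lemma \ref{lem.2.3} is uniform in $q\in (0,+\i]$ and all pointwise estimates on the summands are the same, the $\ell^\infty$-versions go through verbatim; no new analytical input is required.
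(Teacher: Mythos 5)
Your proposal is correct and follows the same route as the paper's own (very terse) proof, which simply cites Lemmas \ref{lem1435786143515}, \ref{lem.2.800}, \ref{lem3.3}--\ref{lem3.60} for part (i) and Lemmas \ref{lem1435786143515}, \ref{lem.2.800}, \ref{lem3.8}--\ref{lem3.900} for part (ii). You correctly identify that the only changes are $\vp(x_k)^{1/q}\mapsto\vp(x_k)$, $\ell^{\rho}\mapsto\ell^{\i}$, and that these are harmless because Lemma \ref{lem.2.3} and the monotonicity of $\{\vp(x_k)\}$, $\{\vp(x_k)/U(x_k)^{1/p}\}$ are the only inputs, and they persist at $q=\i$; this is precisely the reasoning the paper's citation leaves implicit.
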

\begin{proof}
{\rm (i)} The proof of the statement follows by using Lemmas
\ref{lem1435786143515}, \ref{lem.2.800} and \ref{lem3.3} -
\ref{lem3.60}.

{\rm (ii)} The proof of the statement follows by combining Lemmas
\ref{lem1435786143515}, \ref{lem.2.800} and \ref{lem3.8} -
\ref{lem3.900}.
\end{proof}

Now we are in position to formulate our last main result.
\begin{thm}\label{main2002}
Let $0<p<\i$ and let $u,\,v,\,w$ be weights. Assume that $u$ is
such that $U$ is admissible. Let $\vp$, defined by \eqref{eq.2.0},
be non-degenerate with respect to $U^{\frac{1}{p}}$.

{\rm (i)} Let $0<p<1$. Then inequality \eqref{eq.4.2} holds for
every $h \in \mp^+(0,\i)$ if and only if
\begin{gather*}
I_5:= \sup_{x\in\I}\left(\esup_{s\in\I}w(s){\mathcal
U}(x,s)^{\frac{1}{p}} \right)\left(\int_{[0,\i)}{\mathcal
U}(t,x)^{\frac{p^*}{p}} d
(-\underline{v}(t-,\i)^{p^*})\right)^{\frac{1}{p^*}}<\i.
\end{gather*}
Moreover, the best constant $c$ in \eqref{eq.4.2} satisfies that
$c\ap I_5$.

{\rm (ii)} Let $1\leq p<\i$. Then inequality \eqref{eq.4.2} holds
for every $h \in \mp^+(0,\i)$ if and only if
\begin{gather*}
I_6 : = \sup_{x\in\I} \left(\esup_{s\in\I}w(s){\mathcal
U}(x,s)^{\frac{1}{p}} \right) U(x)^{-\frac{1}{p}}\sup_{t\in
(0,x)}U(t)^{\frac{1}{p}} \underline{v}(t,\i)<\i.
\end{gather*}
Moreover, the best constant $c$ in \eqref{eq.4.2} satisfies that
$c\ap I_6$.

\end{thm}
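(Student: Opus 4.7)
The plan is to carry out the anti-discretization program from Section~\ref{mr}. We start from Lemma~\ref{lem1435786143515}, which tells us that \eqref{eq.4.2} holds if and only if the discrete quantity $D$ of \eqref{D} is finite, with $c\ap D$. Applying Lemma~\ref{lem3.10}, this rewrites as
\begin{equation*}
D \ap \sup_{k\in\Z}\vp(x_k)F(x_k),
\end{equation*}
where
\begin{equation*}
F(x) = \left(\int_{[0,\i)}{\mathcal U}(t,x)^{\frac{p^*}{p}}\,d(-\underline{v}(t-,\i)^{p^*})\right)^{\frac{1}{p^*}}
\end{equation*}
in case (i), and $F(x) = \sup_{t\in(0,\i)}{\mathcal U}(t,x)^{\frac{1}{p}}\,\underline{v}(t,\i)$ in case (ii). What remains is to convert the discrete supremum into a continuous supremum and then to bring the result into the form of $I_5$ or $I_6$.

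The key step is Lemma~\ref{lem.2.2}, applied with the admissible function $U^{1/p}$ in place of $U$ and both exponent parameters equal to $1$. To use it we write $\vp(x)F(x)=\vp(x)/g(x)$ with $g:=1/F$ and verify that both $\vp$ and $g$ are $U^{1/p}$-quasiconcave. For $\vp$ this follows from Lemma~\ref{cor.2.0} together with our hypothesis that $\vp$ is non-degenerate with respect to $U^{1/p}$, so $\vp\in\O_{U^{1/p}}$. For $g$, it suffices to check two monotonicity properties of $F$: first, that $F$ is equivalent to a decreasing function, which is immediate because, for each fixed $t$, $x\mapsto{\mathcal U}(t,x)=U(t)/(U(t)+U(x))$ is decreasing; and second, that $U^{1/p}F$ is equivalent to an increasing function. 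For the latter we use the identity
\begin{equation*}
U(x)\cdot{\mathcal U}(t,x)=\frac{U(x)\,U(t)}{U(x)+U(t)},
\end{equation*}
whose right-hand side is increasing in $x$ for each fixed $t$; raising to the power $p^*/p$ (resp.\ $1/p$) and then integrating against the positive measure $-d\underline{v}(t-,\i)^{p^*}$ (resp.\ taking the supremum over $t$) preserves this monotonicity. Non-degeneracy of $g$ follows from the standing assumption $\lim_{t\rw\i}\underline{v}(t,\i)=0$ of Remark~\ref{remconv} together with the non-degeneracy of $\vp$. Thus Lemma~\ref{lem.2.2} yields
\begin{equation*}
\sup_{k\in\Z}\vp(x_k)F(x_k)\ap\sup_{x\in\I}\vp(x)F(x).
\end{equation*}

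To finish, we replace $\vp(x)$ by the equivalent expression $\esup_{s\in\I}w(s)\,{\mathcal U}(x,s)^{1/p}$ provided by Lemma~\ref{cor.2.0}; in case (i) this is already $I_5$. In case (ii) we further simplify $F(x)$ by splitting the supremum at $t=x$: for $t<x$ we have ${\mathcal U}(t,x)\ap U(t)/U(x)$, while for $t\ge x$ we have ${\mathcal U}(t,x)\ap 1$, whence
\begin{equation*}
F(x)\ap U(x)^{-\frac{1}{p}}\sup_{t\in(0,x)}U(t)^{\frac{1}{p}}\underline{v}(t,\i)+\underline{v}(x,\i),
\end{equation*}
and letting $t\rw x-$ in the first term shows that the second is absorbed, producing precisely the second factor of $I_6$. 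Chaining all equivalences gives $c\ap D\ap I_5$ (resp.\ $c\ap I_6$). The main obstacle is the verification that $g=1/F$ lies in $\O_{U^{1/p}}$, but, as the display above shows, this reduces to the elementary monotonicity of $U(x)\,{\mathcal U}(t,x)$ in $x$.
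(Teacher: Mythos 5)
Your proposal follows exactly the chain the paper uses for Theorem~\ref{main2002}: Lemma~\ref{lem1435786143515} (discretization), Lemma~\ref{lem3.10} (reduction of the two discrete terms to a single one), Lemma~\ref{lem.2.2} (discrete-to-continuous supremum), and Lemma~\ref{cor.2.0} (the $\esup$ representation of $\vp$). The one thing you add that the paper leaves implicit is the explicit verification that $g=1/F$ lies in $\O_{U^{1/p}}$, which you correctly reduce to the elementary monotonicity of $U(x)\,\mathcal U(t,x)=\tfrac{U(x)U(t)}{U(x)+U(t)}$ in $x$; this is a genuinely useful gloss, though your one-line claim of non-degeneracy of $g$ (invoking Remark~\ref{remconv}) is stated rather than proved, mirroring the paper's own level of detail.
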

\begin{proof}
Both statements of the theorem follow by using Lemmas
\ref{lem1435786143515}, \ref{lem3.10}, \ref{lem.2.2} and
\ref{cor.2.0}.
\end{proof}

\section{Some Applications}\label{Some.Applications}

In this Section we give some applications of the obtained results. We
start with the weighted Hardy inequality on the cone of
non-increasing functions. Denote by $H_u$ the weighted Hardy
operator
$$
H_u f (x) : = \frac{1}{U(x)}\int_0^x f(t) u(t)\,dt, \qq x\in \I
$$
Note that the characterization of the weighted Hardy inequality on
the cone of non-increasing functions
\begin{equation}\label{eq.main3}
\|H_u f\|_{q,w,\I} \le c \|f\|_{p,v,\I}, \qq f \in
\mp^+(0,\i;\downarrow).
\end{equation}
has been obtained in \cite{bg} and \cite{gjop}.

The following reduction theorem is true.
\begin{thm}\label{Reduction.thm.1.1}
Let $0 < p,\,q  < \i$, and let $u,\,v,\,w$ be weights. Then the
inequality \eqref{eq.main3} holds for every $f \in
\mp^+(0,\i;\downarrow)$ if and only if the inequality
\begin{equation}\label{eq.main3021}
\left( \int_0^{\i} \left( \frac{1}{U(x)}\int_0^x \left(
\int_t^{\i}h \right)^{\frac{1}{p}} u(t)\,dt \right)^q w(x)\,dx
\right)^{\frac{p}{q}} \le C \int_0^{\i} h(t) V(t)\,dt
\end{equation}
holds for all $h\in\mp^+\I$. Moreover, the best constants $c$ and
$C$ in \eqref{eq.main3} and \eqref{eq.main3021}, respectively,
satisfy $C \ap c^p$.
\end{thm}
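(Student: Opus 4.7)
The key observation is the bijective correspondence (modulo approximation) between $\mp^+(0,\infty;\downarrow)$ and $\mp^+(\I)$ furnished by the substitution
$$
f(x) = \left(\int_x^{\infty} h(z)\,dz\right)^{1/p}, \qquad \text{equivalently} \qquad h = -(f^p)'.
$$
Whenever this substitution is rigorously available, Fubini's theorem gives
$$
\|f\|_{p,v,\I}^p = \int_0^{\infty}\!\!\left(\int_x^{\infty}\!h(z)\,dz\right)\! v(x)\,dx = \int_0^{\infty}\! h(t) V(t)\,dt,
$$
while
$$
H_u f(x) = \frac{1}{U(x)}\int_0^x \!\Bigl(\int_t^{\infty} h\Bigr)^{1/p} u(t)\,dt,
$$
so $\|H_u f\|_{q,w,\I}^p$ is precisely the LHS of \eqref{eq.main3021}. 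This matching is the entire content of the reduction.

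\smallskip

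\noindent\textbf{Direction \eqref{eq.main3}$\Rightarrow$\eqref{eq.main3021}.} Given any $h\in\mp^+(\I)$, put $f(x):=\bigl(\int_x^\infty h\bigr)^{1/p}$; this $f$ is non-increasing, so applying \eqref{eq.main3} to it and raising both sides to the $p$-th power yields \eqref{eq.main3021} with $C\leq c^p$.

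\smallskip

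\noindent\textbf{Direction \eqref{eq.main3021}$\Rightarrow$\eqref{eq.main3}.} Conversely, take $f\in\mp^+(0,\infty;\downarrow)$. If $f^p$ is absolutely continuous and $\lim_{x\to\infty}f(x)=0$, set $h:=-(f^p)'\geq 0$; then $f^p(x)=\int_x^\infty h$, so plugging into \eqref{eq.main3021} and reversing the above calculation produces \eqref{eq.main3} with $c\leq C^{1/p}$. For a general non-increasing $f$, approximate from below by a sequence $\{f_n\}$ obtained by capping $f$ at height $n$, truncating its support to $(0,n)$, and mollifying; each $f_n$ is admissible above, and $f_n\uparrow f$ pointwise a.e. Both sides of \eqref{eq.main3} pass to the limit by monotone convergence (the operator $H_u$ respects monotone limits), completing the proof and giving $c\ap C^{1/p}$.

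\smallskip

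\noindent\textbf{Main obstacle.} The only delicate step is the approximation: we must ensure that the substitution $h_n=-(f_n^p)'$ is rigorously defined and that no mass is lost in the limit (in particular when $\lim_{x\to\infty}f(x)>0$, which forces $V(\infty)<\infty$ if $\|f\|_{p,v,\I}<\infty$). A cleaner alternative is to rewrite \eqref{eq.main3021} with a non-negative Borel measure $d\mu$ in place of $h(t)\,dt$, using the Lebesgue--Stieltjes apparatus recalled in Section~\ref{pre}: any non-increasing $f^p$ with $f(\infty)=0$ admits the representation $f^p(x)=\int_{[x,\infty)}d\mu$ with $\mu$ the Lebesgue--Stieltjes measure of $-f^p$, and the density case is then recovered by approximating $\mu$ by absolutely continuous measures.
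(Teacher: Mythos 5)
Your proposal follows essentially the same route as the paper. Both reduce the reverse implication to the density of $\{\,x\mapsto\int_x^\infty h:\,h\in\mp^+(\I)\,\}$ in the cone of non-negative non-increasing functions (applied to $f^p$), then pass to the limit by the Monotone Convergence Theorem, and finally apply Fubini to convert $\int_0^\infty(\int_t^\infty h)\,v$ into $\int_0^\infty h\,V$. The forward direction is immediate in both, since $(\int_x^\infty h)^{1/p}$ is automatically non-increasing. The only difference is that the paper simply \emph{cites} the approximation result (Sinnamon--Stepanov, \cite{ss}, p.\ 97) rather than constructing an approximating sequence.

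One small caution about your explicit construction: after ``capping and truncating,'' mollifying with a \emph{symmetric} kernel does not guarantee $f_n\le f$, so the claim $f_n\uparrow f$ pointwise a.e.\ is not automatic as written. You would need a one-sided mollifier (kernel supported in $(-\varepsilon,0)$), which keeps the mollified function below $f$ and non-increasing, with a.e.\ convergence at the (at most countably many) continuity points of $f$; or else use the Lebesgue--Stieltjes reformulation you sketch at the end, which is the cleaner fix and in fact close in spirit to the result the paper invokes. Your observation that $f(\infty)>0$ forces $V(\infty)<\infty$ for the right-hand side of \eqref{eq.main3} to be finite is correct and shows why the general non-increasing case must still go through the monotone approximation rather than a direct substitution $h=-(f^p)'$.
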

\begin{proof}
It is well-known that every non-negative, non-increasing function
$f$ is the pointwise limit of an increasing sequence of functions
of the form $\int_s^{\i} h$ for $h \ge 0$ (cf. \cite{ss}, p. 97).
Since $f$ is non-increasing if and only if $f^p$ is
non-increasing, by the Monotone Convergence Theorem,
\eqref{eq.main3} is equivalent to
\begin{equation*}\label{eq.main301}
\left( \int_0^{\i} \left( \frac{1}{U(x)}\int_0^x \left(
\int_t^{\i}h \right)^{\frac{1}{p}} u(t)\,dt \right)^q w(x)\,dx
\right)^{\frac{p}{q}} \le c^p \int_0^{\i} \left( \int_t^{\i}h
\right) v(t)\,dt,\,\, h\in\mp^+\I,
\end{equation*}
which, by Fubini's Theorem, is equivalent to
\begin{equation*}\label{eq.main302}
\left( \int_0^{\i} \left( \frac{1}{U(x)}\int_0^x \left(
\int_t^{\i}h \right)^{\frac{1}{p}} u(t)\,dt \right)^q w(x)\,dx
\right)^{\frac{p}{q}} \le c^p \int_0^{\i} h(t) V(t)\,dt,\,\,
h\in\mp^+\I.
\end{equation*}
\end{proof}
Analogously the following theorem can be proved:
\begin{thm}\label{Reduction.thm.1.2}
Let $0 < p  < \i$, and let $u,\,v,\,w$ be weights. Then the
inequality
\begin{equation}\label{eq.main3000000}
\|H_u f\|_{\i,w,\I} \le c \|f\|_{p,v,\I}
\end{equation}
holds for every $f \in \mp^+(0,\i;\downarrow)$ if and only if the
inequality
\begin{equation}\label{eq.main3021000000}
\esup_{x\in\I} w(x)^p \left( \frac{1}{U(x)}\int_0^x \left(
\int_t^{\i}h \right)^{\frac{1}{p}} u(t)\,dt \right)^p \le C
\int_0^{\i} h(t) V(t)\,dt
\end{equation}
holds for all $h\in\mp^+\I$. Moreover, for the best constants $c$ and
$C$ in \eqref{eq.main3000000} and \eqref{eq.main3021000000},
respectively, it yields that $C \ap c^p$.
\end{thm}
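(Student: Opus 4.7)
The plan is to mirror the argument of Theorem \ref{Reduction.thm.1.1}, adapting only the $q = \i$ step. The key fact (already recalled in the previous proof, citing \cite{ss}, p.\ 97) is that every $f \in \mp^+(0,\i;\dn)$ is the pointwise limit of an increasing sequence of functions of the form $\int_s^{\i} h_n$ with $h_n \in \mp^+\I$. Since $f \in \mp^+(0,\i;\dn)$ if and only if $f^p \in \mp^+(0,\i;\dn)$, the same representation applies to $f^p$, so the substitution to make is $f(s) = (\int_s^{\i} h)^{1/p}$.

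With this substitution, the left-hand side of \eqref{eq.main3000000} becomes
\begin{equation*}
\|H_u f\|_{\i,w,\I} \;=\; \esup_{x\in\I} w(x)\,\frac{1}{U(x)}\int_0^x \left(\int_t^{\i}h\right)^{1/p} u(t)\,dt,
\end{equation*}
while by Fubini's Theorem the right-hand side satisfies
\begin{equation*}
\|f\|_{p,v,\I}^{\,p} \;=\; \int_0^{\i} \left(\int_t^{\i}h\right) v(t)\,dt \;=\; \int_0^{\i} h(s) V(s)\,ds.
\end{equation*}
Next, I would raise the resulting inequality to the $p$-th power. Since $w(x)^p$ is non-negative and $t \mapsto t^p$ is monotone on $[0,\i)$, the essential supremum commutes with the $p$-th power (that is, $(\esup g)^p = \esup g^p$ for non-negative $g$), producing \eqref{eq.main3021000000} with $C = c^p$.

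For the reverse direction, one takes the $p$-th root of \eqref{eq.main3021000000} and then applies the substitution in reverse, combined with a Monotone Convergence Theorem for essential suprema against the approximating sequence $\{\int_s^{\i} h_n\}_{n}$, in order to extend the equivalence from functions of the form $(\int_s^{\i} h)^{1/p}$ to all $f \in \mp^+(0,\i;\dn)$.

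The structure follows that of Theorem \ref{Reduction.thm.1.1} verbatim; the only substantive change is that the Monotone Convergence step is applied to essential suprema rather than Lebesgue integrals, and this is the sole point requiring explicit justification. It is straightforward, however, since the $\esup$ of a pointwise a.e.\ monotone increasing sequence of non-negative measurable functions agrees with the $\esup$ of the pointwise limit; I do not anticipate any genuine obstacle beyond invoking this elementary fact.
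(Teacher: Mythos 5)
Your proof is correct and matches the route the paper intends: the paper gives no separate proof for Theorem~\ref{Reduction.thm.1.2}, only the remark that it follows ``analogously'' to Theorem~\ref{Reduction.thm.1.1}, and your argument is exactly that adaptation. You correctly isolate the one genuinely new ingredient — replacing the Monotone Convergence Theorem for integrals by the corresponding statement for essential suprema (if $0\le g_n\uparrow g$ a.e.\ then $\esup g_n \uparrow \esup g$), together with $(\esup g)^p=\esup g^p$ for $g\ge 0$ — and both are elementary, so the proof is complete.
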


Combining Theorem \ref{Reduction.thm.1.1} with Theorems
\ref{main1001} and \ref{main1002}, we obtain the following
statement.
\begin{thm}\label{main3}
Let $u,\,v,\,w$ be weights. Assume that $u$ is such that $U$ is
admissible and the measure $w(t)dt$ is non-degenerate with respect
to $U^q$.

{\rm (i)} Let $0 < p \le 1$, $p\leq q <\i$. Then the inequality
\eqref{eq.main3}  holds for every $f \in \mp^+(0,\i;\downarrow)$
if and only if
$$
C_1:=\sup_{x \in \I}\left(\int_0^{\i}{\mathcal U}(x,t)^q
w(t)dt\right)^{\frac{1}{q}} {U(x)}^{-1}\left(\sup_{t\in (0,x)}U(t)
{V(t)}^{-\frac{1}{p}}\right)<\i.
$$
Moreover, the best constant $c$ in \eqref{eq.main3} satisfies that
$c\ap C_1$.

{\rm (ii)} Let $0 < p \le 1$, $0 < q < p$. Then the inequality
\eqref{eq.main3} holds for every $f \in \mp^+(0,\i;\downarrow)$ if
and only if
$$
C_2:=\left(\int_0^{\i} \left(\int_0^{\i}{\mathcal
U}(x,t)^qw(t)dt\right)^{\frac{q}{p-q}} U(x)^{\frac{pq}{q-p}}
\left(\sup_{t\in(0,x)}U(t)^{\frac{pq}{p-q}}{V(t)}^{\frac{q}{q-p}}\right)w(x)\,dx\right)^{\frac{p-q}{pq}}<\i.
$$
Moreover, the best constant $c$ in \eqref{eq.main3} satisfies that
$c\ap C_2$.

{\rm (iii)} Let $1 < p \le q < \i$. Then the inequality \eqref{eq.main3}
holds for every $f \in \mp^+(0,\i;\downarrow)$ if and only if
$$
C_3:=\sup_{x\in\I}\left(\int_0^{\i}{\mathcal U}(x,t)^q
w(t)dt\right)^{\frac{1}{q}} \left(\int_0^{\i} {\mathcal
U}(t,x)^{p'}\frac{v(t)}{V(t)^{p'}}\,dt \right)^{\frac{1}{p'}}<\i.
$$
Moreover, the best constant $c$ in \eqref{eq.main3} satisfies that
$c\ap C_3$.

{\rm (iv)} Let $1 < p < \i$, $0 < q < p$. Then the inequality
\eqref{eq.main3} holds for every $f \in \mp^+(0,\i;\downarrow)$ if
and only if
$$
C_4:=\left(\int_0^{\i} \left(\int_0^{\i}{\mathcal U}(x,t)^q
w(t)dt\right)^{\frac{q}{p-q}} \left( \int_0^{\i} {\mathcal
U}(t,x)^{p'}\frac{v(t)}{V(t)^{p'}}\,dt
\right)^{\frac{q(p-1)}{p-q}}w(x)\,dx\right)^{\frac{p-q}{pq}}<\i.
$$
Moreover, the best constant $c$ in the \eqref{eq.main3} satisfies
that $c\ap C_4$.
\end{thm}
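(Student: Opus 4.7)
The plan is to combine Theorem \ref{Reduction.thm.1.1} with the iterated Hardy-type characterizations already obtained in Theorems \ref{main1001} and \ref{main1002}. First, by Theorem \ref{Reduction.thm.1.1} the inequality \eqref{eq.main3} is equivalent to \eqref{eq.main3021}, with best constants related by $C \ap c^p$. Next, I would observe that \eqref{eq.main3021} is literally an instance of the master inequality \eqref{eq.4.1}: the inner power of $\int_t^\i h$ is $1/p$, playing the role of the ``$p$'' of \eqref{eq.4.1}; the outer exponent is $q$, playing the role of ``$q/p$''; and the right-hand-side weight is $V$, playing the role of ``$v$''. Accordingly Theorems \ref{main1001} and \ref{main1002} apply under the formal substitution $(p,q,v) \longleftrightarrow (1/p,\,q/p,\,V)$, and a terminal $p$-th root recovers the best constant $c$ in \eqref{eq.main3}.

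The four subcases match the four regimes of the substituted parameters. Since $0 < p \le 1$ is equivalent to $1/p \ge 1$, cases (i) and (ii) fall under Theorem \ref{main1002}: case (i), where $q/p \ge 1$, gives $I_3$, while case (ii), where $q/p < 1$, gives $I_4$. Dually, $1 < p < \i$ forces $1/p < 1$, so cases (iii) and (iv) fall under Theorem \ref{main1001}, producing $I_1$ in the range $p \le q$ and $I_2$ in the range $q < p$. In each case the transformation of exponents (using the identities $P^*/P = p/(p-1) = p'$, $Q^* = q/(p-q)$, $1/Q = p/q$, and the like), followed by the terminal $1/p$-th root, turns the quantity $I_j$ into, up to multiplicative constants, the announced $C_j$.

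The only step that is not pure bookkeeping is the reformulation of the Lebesgue--Stieltjes integral occurring in $I_1$ and $I_2$ after the substitution $v \leftarrow V$. Since $V$ is absolutely continuous and non-decreasing, the corresponding quantity $\esup_{s\in(t,\i)}V(s)^{-1}$ equals $V(t)^{-1}$, and a short differentiation gives
$$
d\bigl(-V(t-)^{-1/(p-1)}\bigr) = \frac{1}{p-1}\,\frac{v(t)}{V(t)^{p'}}\,dt,
$$
which produces the factor $v(t)/V(t)^{p'}$ visible in $C_3$ and $C_4$. Cases (i) and (ii) instead use the supremum form of Theorem \ref{main1002}, where only $V(t)^{-1}$ itself enters and no fractional power of $V$ needs to be differentiated. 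The principal ``obstacle'' is therefore purely notational: keeping the exponent transformations $P = 1/p$, $Q = q/p$ and the terminal $1/p$-th root consistent across all four subcases.
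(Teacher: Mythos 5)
Your proposal is correct and matches the paper's own proof, which is precisely the combination of Theorem~\ref{Reduction.thm.1.1} with Theorems~\ref{main1001} and~\ref{main1002} under the substitution $(p,q,v)\mapsto(1/p,\,q/p,\,V)$ followed by a terminal $1/p$-power. The bookkeeping you describe — $p_m^*/p_m=p'$, $q_m^*=q/(p-q)$, $\underline{V}(t,\infty)=V(t)^{-1}$, and the differentiation $d\bigl(-V(t)^{-1/(p-1)}\bigr)=\frac{1}{p-1}\,v(t)V(t)^{-p'}\,dt$ — is exactly what turns $I_3,I_4,I_1,I_2$ into $C_1,C_2,C_3,C_4$, respectively.
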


Combining Theorems \ref{Reduction.thm.1.2} and \ref{main2002}, we
arrive at the following statement.
\begin{thm}\label{main30000}
Let $u,\,v,\,w$ be weights. Assume that $u$ is such that $U$ is
admissible. Let $\vp$, defined by
$$
\vp(t) : = \esup_{s\in (0,t)} U(s) \esup_{\tau \in
(s,\i)}\frac{w(\tau)}{U(\tau)},\qq t\in\I,
$$
be non-degenerate with respect to $U$.

{\rm (i)} Let $0 < p \le 1$. Then the inequality
\eqref{eq.main3000000} holds for every $f \in
\mp^+(0,\i;\downarrow)$ if and only if
$$
C_5:=\sup_{x\in\I}\left(\esup_{s\in\I}w(s){\mathcal U}(x,s)\right)
{U(x)}^{-1}\left(\sup_{t\in (0,x)}U(t)
{V(t)}^{-\frac{1}{p}}\right)<\i.
$$
Moreover, the best constant $c$ in \eqref{eq.main3000000}
satisfies that $c\ap C_5$.

{\rm (ii)} Let $1 < p < \i$. Then the inequality
\eqref{eq.main3000000} holds for every $f \in
\mp^+(0,\i;\downarrow)$ if and only if
$$
C_6:=\sup_{x\in\I}\left(\esup_{s\in\I}w(s){\mathcal U}(x,s)\right)
\left(\int_0^{\i} {\mathcal U}(t,x)^{p'}\frac{v(t)}{V(t)^{p'}}\,dt
\right)^{\frac{1}{p'}}<\i.
$$
Moreover, the best constant $c$ in \eqref{eq.main3000000}
satisfies that $c\ap C_6$.
\end{thm}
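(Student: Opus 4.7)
The plan is to chain Theorem~\ref{Reduction.thm.1.2} with Theorem~\ref{main2002} via a parameter substitution. By Theorem~\ref{Reduction.thm.1.2}, inequality \eqref{eq.main3000000} on non-increasing $f$ is equivalent to inequality \eqref{eq.main3021000000} for all $h\in\mp^+\I$, with best constants related by $C\ap c^p$.

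The key (non-obvious) observation is that \eqref{eq.main3021000000} is syntactically identical to \eqref{eq.4.2} under the substitution
$$
\tilde p:=\tfrac1p,\qquad \tilde w:=w^p,\qquad \tilde v:=V.
$$
Indeed, plugging these into \eqref{eq.4.2} recovers \eqref{eq.main3021000000} term-by-term: the outer exponent $1/\tilde p=p$ matches the outer $p$-th power, the inner power $\tilde p=1/p$ matches the $1/p$ in the inner integrand, the weight $\tilde w$ becomes $w^p$, and $\|h\|_{1,\tilde v,\I}=\int h\,V$. The hypotheses transfer automatically: $U^{1/\tilde p}=U^p$ is admissible because $U$ is, and the function of \eqref{eq.2.0} associated with $(u,\tilde w,\tilde p)$ is easily seen to equal $\varphi^p$ (where $\varphi$ is the function displayed in the statement of the theorem), so its non-degeneracy with respect to $U^p$ is equivalent to non-degeneracy of $\varphi$ with respect to $U$.

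For (i), $0<p\le 1$ yields $\tilde p\ge 1$, so Theorem~\ref{main2002}(ii) applies. Using $\underline V(t,\i)=V(t)^{-1}$ (since $V$ is continuous and non-decreasing) and factoring the $p$-th power outside each factor turns the resulting $I_6$-expression into $C_5^p$, whence $c\ap C_5$. For (ii), $1<p<\i$ yields $\tilde p\in(0,1)$, so Theorem~\ref{main2002}(i) applies with $\tilde p^*=1/(p-1)$ and $\tilde p^*/\tilde p=p'$. By absolute continuity of $V$,
$$
-d\bigl(V(t)^{-1/(p-1)}\bigr) = \tfrac{1}{p-1}\, V(t)^{-p'}\,v(t)\,dt,
$$
so the Lebesgue--Stieltjes integral collapses to $(p-1)^{-1}\int \mathcal U(t,x)^{p'}v(t)V(t)^{-p'}\,dt$; raising to $1/\tilde p^*=p-1$ and pulling out a $p$-th power from the outer supremum (using $p-1=p/p'$) gives $I_5\ap C_6^p$, hence $c\ap C_6$.

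The main obstacle is the identification in the second paragraph: at first glance \eqref{eq.main3021000000} does not look like \eqref{eq.4.2} because the outer exponent $p$ and the inner exponent $1/p$ are exchanged relative to \eqref{eq.4.2}, but the substitution $\tilde p=1/p$ interchanges precisely these two roles. Once this is recognized, the remaining steps are routine bookkeeping plus one Stieltjes-integral computation.
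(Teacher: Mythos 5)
Your proposal is correct and it is exactly the argument the paper gives (the paper's proof of this theorem is the one-line statement that it follows by combining Theorems~\ref{Reduction.thm.1.2} and~\ref{main2002}); your substitution $\tilde p = 1/p$, $\tilde w = w^p$, $\tilde v = V$ is precisely the right way to recognize \eqref{eq.main3021000000} as an instance of \eqref{eq.4.2}, and the remaining bookkeeping (including $\underline{V}(t,\i)=V(t)^{-1}$ and the Stieltjes collapse) is carried out correctly.
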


Now we consider the generalized Stieltjes transform $S$ defined by
$$
(Sh)(x) = \int_0^{\i} \frac{h(t)\,dt}{U(x) + U(t)}
$$
for all $h \in \mp^+\I$; the usual Stieltjes transform is obtained
on putting $U(x) \equiv x$. In the case $U(x) \equiv x^{\la}$,
$\la > 0$, the boundedness of the operator $S$ between weighted
$L^p$ and $L^q$ spaces was investigated in \cite{a} (when $1\le p
\le q \le \i$) and in \cite{s2}, \cite{s1} (when $1\le q < p\le
\i$). This problem also was considered in \cite{GKP} and
\cite{GPSW}, where completely different approach was used, based
on the so call ``gluing lemma'' (see also \cite{gkuf.per.}).

The following reduction theorem is true.
\begin{thm}\label{Reduction.thm.1.3}
Let $0 < q  \le \i$, $1 \le p \le \i$, and let $u,\,v,\,w$ be
weights. Then the inequality
\begin{equation}\label{Stieltjes.ineq.01}
\|Sh\|_{q,w,\I} \le c \|h\|_{p,v,\I}, \qq h \in \mp^+\I
\end{equation}
holds if and only if
\begin{equation}\label{Stieltjes.ineq.0100}
\left\|H_u \left( \int_t^{\i} h\right)\right\|_{q,w,\I} \le c
\|hU\|_{p,v,\I},\,\, h\in \mp^+\I
\end{equation}
holds.
\end{thm}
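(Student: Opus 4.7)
The plan is to identify $H_u\bigl(\int_t^\infty h\bigr)$ pointwise, up to multiplicative constants, with the Stieltjes transform $S(hU)$; the equivalence of the two inequalities then follows from the bijective substitution $g = hU$ on $\mp^+\I$.

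First, for every $h\in\mp^+\I$ and every $x\in\I$, Fubini's theorem yields
\begin{equation*}
H_u\!\left(\int_t^{\infty}h\right)(x)
= \frac{1}{U(x)}\int_0^x u(t)\int_t^{\infty} h(s)\,ds\,dt
= \frac{1}{U(x)}\int_0^{\infty} h(s)\,U\bigl(\min\{s,x\}\bigr)\,ds.
\end{equation*}
Second, the elementary two-sided bound $\tfrac12\min\{1,a\}\le a/(1+a)\le\min\{1,a\}$ for $a\ge 0$, applied with $a=U(s)/U(x)$, gives
\begin{equation*}
\frac{U(\min\{s,x\})}{U(x)} \;=\; \min\!\left\{1,\frac{U(s)}{U(x)}\right\} \;\approx\; \frac{U(s)}{U(x)+U(s)}.
\end{equation*}
Combining these two observations yields the pointwise equivalence
\begin{equation*}
H_u\!\left(\int_t^{\infty}h\right)(x) \;\approx\; \int_0^{\infty}\frac{U(s)\,h(s)}{U(x)+U(s)}\,ds \;=\; S(hU)(x),
\end{equation*}
with constants independent of $h$ and $x$.

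Third, since $U$ is admissible and hence strictly positive on $\I$, the map $h\mapsto g := hU$ is a bijection of $\mp^+\I$ onto itself, and clearly $\|hU\|_{p,v,\I}=\|g\|_{p,v,\I}$. Substituting $h = g/U$ into \eqref{Stieltjes.ineq.0100} and using the pointwise equivalence above, one sees that \eqref{Stieltjes.ineq.0100} is equivalent (with equivalent best constants) to
\begin{equation*}
\|Sg\|_{q,w,\I} \le c'\|g\|_{p,v,\I},\qquad g\in\mp^+\I,
\end{equation*}
which is precisely \eqref{Stieltjes.ineq.01}. Both implications are obtained by running this substitution in either direction.

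There is no serious obstacle in the argument: the proof is essentially the Fubini computation together with the routine kernel comparison $\min\{1,a\}\approx a/(1+a)$. The only point deserving care is that the asserted equivalence of the two inequalities is to be understood up to absolute multiplicative constants coming from the kernel comparison, which is the standard meaning throughout the paper.
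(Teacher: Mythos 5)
Your proposal is correct and takes essentially the same route as the paper: Fubini's theorem to turn the iterated integral into a kernel integral against $U(\min\{s,x\})/U(x)$, the elementary comparison $\min\{1,a\}\approx a/(1+a)$ to match this kernel to the Stieltjes kernel, and the substitution $g=hU$ (which is a bijection of $\mp^+\I$ since the paper's standing assumption $U(t)>0$ holds) to pass between the two inequalities. The only cosmetic difference is that the paper performs the substitution first and then the Fubini/kernel computation, whereas you do them in the reverse order, and you make the $\min\{1,a\}\approx a/(1+a)$ step explicit where the paper leaves it implicit.
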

\begin{proof}
Evidently, inequality \eqref{Stieltjes.ineq.01} is equivalent to
the following inequality:
$$
\|S(hU)\|_{q,w,\I} \le c \|hU\|_{p,v,\I}, \qq h \in \mp^+\I.
$$
It is easy to see that
$$
S(hU)(x) \ap \frac{1}{U(x)}\int_0^x \left( \int_t^{\i}
h(s)\,ds\right)u(t)\,dt, \qq h\in \mp^+\I.
$$
Indeed, by Fubini's Theorem, we have that
\begin{align*}
\int_0^x \left( \int_t^{\i} h(s)\,ds\right)u(t)\,dt & = \int_0^x
\left( \int_t^x h(s)\,ds + \int_x^{\i} h(s)\,ds\right)u(t)\,dt \\
& = \int_0^x \int_0^s u(t)\,dt h(s)\,ds  + \int_x^{\i} h(s)\,ds
\int_0^x u(t)\,dt \\
& = \int_0^x U(s)h(s)\,ds  + U(x) \int_x^{\i} h(s)\,ds \\
& \ap U(x) \int_0^{\i}\frac{U(s)}{U(x) + U(s)}h(s)\,ds =
U(x)S(hU)(x),
\end{align*}
that is,
$$
S(hU)(x) \ap H_u \left( \int_t^{\i} h\right)(x), \qq x \in \I.
$$
Hence, we see that the inequality \eqref{Stieltjes.ineq.01}  is
equivalent to the inequality \eqref{Stieltjes.ineq.0100}.
\end{proof}

Combining Theorem \ref{Reduction.thm.1.3} with  Theorems
\ref{main1002}, \ref{main2002} and Theorem 3.1, 3.2 in \cite{gmp},
we obtain the following statements.
\begin{thm}\label{Stieltjes.ineq.thm.1.1}
Let $u,\,v,\,w$ be weights. Assume that $u$ is such that $U$ is
admissible and the measure $w(t)dt$ is non-degenerate with respect
to $U^q$. Let $p,\,q \in (0,\i]$. When $q < p < \i$, we set $r =
\frac{pq}{p-q}$.

{\rm (i)} Let $p=1$, $1\leq q <\i$. Then the  inequality
\eqref{Stieltjes.ineq.01} holds for every $h \in \mp^+(0,\i)$ if
and only if
$$
S_1:=\sup_{x \in \I}\left(\int_0^{\i}{\mathcal U}(x,t)^q
w(t)dt\right)^{\frac{1}{q}} {U(x)}^{-1}\sup_{t\in (0,x)}{U(t)}
\esup_{s\in (t,\i)} (U(s)v(s))^{-1} < \i.
$$
Moreover, the best constant $c$ in \eqref{Stieltjes.ineq.01}
satisfies that $c\ap S_1$.

{\rm (ii)} Let $p=1$, $0<q<1$. Then the inequality
\eqref{Stieltjes.ineq.01} holds for every $h \in \mp^+(0,\i)$ if
and only if
\begin{align*}
S_2:=\left(\int_0^{\i} \left(\int_0^{\i}{\mathcal
U}(x,t)^qw(t)dt\right)^{q^*} U(x)^{-{q^*}} \right.
& \\
& \hspace{-4cm}\left. \times
\left(\sup_{t\in(0,x)}U(t)^{{q^*}}{\esup_{s\in
(t,\i)}(U(s)v(s))^{-{q^*}}}\right){w(x)}\,dx\right)^{\frac{1}{q^*}}<\i.
\end{align*}
Moreover, the best constant $c$ in \eqref{Stieltjes.ineq.01}
satisfies that $c\ap S_2$.

{\rm (iii)} Let $1 < p \le q < \i$. Then the inequality
\eqref{Stieltjes.ineq.01} holds for every $h \in \mp^+(0,\i)$ if
and only if
\begin{align*}
S_3:=\sup_{x\in\I}\left(\int_0^{\i}{\mathcal U}(x,t)^q
w(t)dt\right)^{\frac{1}{q}} \left( \int_0^{\i} {\mathcal
U}(t,x)^{p'} U(t)^{-p'}v(t)^{1-p'}\,dt\right)^{\frac{1}{p'}} < \i.
\end{align*}
Moreover, the best constant $c$ in \eqref{Stieltjes.ineq.01}
satisfies that $c\ap S_3$.

{\rm (iv)} Let $1 < p  < \i$, $0 < q < p$. Then the inequality
\eqref{Stieltjes.ineq.01} holds for every $h \in \mp^+(0,\i)$ if
and only if
\begin{align*}
S_4:= \left(\int_0^{\i} \left(\int_0^{\i}{\mathcal U}(x,t)^q
w(t)dt\right)^{\frac{r}{p}} \left( \int_0^{\i}{\mathcal
U}(t,x)^{p'}
U(t)^{-p'}v(t)^{1-p'}\,dt\right)^{\frac{r}{p'}}w(x)\,dx
\right)^{\frac{1}{r}} < \i.
\end{align*}
Moreover, the best constant $c$ in \eqref{Stieltjes.ineq.01}
satisfies that $c\ap S_4$.

{\rm (v)} Let $p = \i$, $0 <  q < \i$. Then the inequality
\eqref{Stieltjes.ineq.01} holds for every $h \in \mp^+(0,\i)$ if
and only if
\begin{align*}
S_5:= \left(\int_0^{\i} \left({\mathcal U}(t,x) U(t)^{-1}
\frac{dt}{v(t)} \right)^q w(x)\,dx \right)^{\frac{1}{q}} < \i.
\end{align*}
Moreover, the best constant $c$ in \eqref{Stieltjes.ineq.01}
satisfies that $c\ap S_5$.
\end{thm}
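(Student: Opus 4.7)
The plan is to reduce the Stieltjes inequality \eqref{Stieltjes.ineq.01} to an instance of \eqref{mainn} and then invoke the appropriate characterization---either one of the main results of this paper or a result from \cite{gmp}---with a routine weight substitution. By Theorem \ref{Reduction.thm.1.3}, \eqref{Stieltjes.ineq.01} is equivalent to
\begin{equation*}
\left(\int_0^\i \left(\frac{1}{U(x)}\int_0^x \Bigl(\int_t^\i h(s)\,ds\Bigr) u(t)\,dt\right)^q w(x)\,dx\right)^{1/q} \le c\,\|hU\|_{p,v,\I}.
\end{equation*}
The left-hand side is precisely the left-hand side of \eqref{eq.4.1} with the inner exponent (the parameter $p$ in \eqref{eq.4.1}) set equal to $1$, independently of the value of $p$ in the Stieltjes setting.

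For cases (i) and (ii), where $p=1$, we have $\|hU\|_{1,v,\I}=\|h\|_{1,Uv,\I}$, so the reduced inequality is exactly \eqref{eq.4.1} with inner exponent $1$ and RHS-weight $Uv$. Applying Theorem \ref{main1002}(i) when $1\le q<\i$ and Theorem \ref{main1002}(ii) when $0<q<1$, with the substitution $v\mapsto Uv$, produces the conditions $I_3$ and $I_4$ in a form where $\underline{v}(t,\i)$ is replaced by $\esup_{s\in(t,\i)}(U(s)v(s))^{-1}$; these expressions are exactly $S_1$ and $S_2$.

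For cases (iii), (iv) and (v), where $1<p\le\i$, the right-hand side of the reduced inequality takes the form $c\,\|h\|_{p,U^p v,\I}$ (with the essential-supremum interpretation when $p=\i$), so the reduced inequality is an instance of \eqref{mainn} with parameter $\t=p>1$ and weight $U^p v$. This range of parameters is not covered by the theorems of the present paper, but it is exactly what is characterized by Theorems~3.1 and 3.2 of \cite{gmp}. Invoking those theorems with weight $U^p v$ in place of $v$ and simplifying the resulting expressions of the form $v^{1-p'}U^{-p'}$ (respectively $(Uv)^{-1}$ when $p=\i$) yields $S_3$, $S_4$, and $S_5$, respectively.

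The only genuine difficulty is the bookkeeping required to verify that the conditions supplied by Theorem \ref{main1002} and by the cited theorems of \cite{gmp}, once the weight $v$ is replaced by $Uv$ (when $p=1$) or by $U^p v$ (when $1<p\le\i$), collapse precisely to the formulas $S_1$--$S_5$ in the statement. This amounts to elementary algebraic manipulation using standard identities for essential suprema and weighted Lebesgue norms, and introduces no new analytic difficulty beyond what has already been established in Sections~\ref{mr} and \ref{Antidiscretization}.
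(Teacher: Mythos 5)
Your proposal is correct and follows the same route as the paper: Theorem \ref{Reduction.thm.1.3} converts \eqref{Stieltjes.ineq.01} into \eqref{eq.4.1} with inner exponent $1$ and right-hand-side weight $Uv$ (when $p=1$) or into an instance of \eqref{mainn} with $\t=p>1$ and weight $U^{p}v$ (when $1<p\le\i$), and then Theorem \ref{main1002}(i),(ii) and Theorems~3.1, 3.2 of \cite{gmp} supply $S_1$--$S_5$ after the routine substitution (in particular $(U^{p}v)^{1-p'}=U^{-p'}v^{1-p'}$ since $p(1-p')=-p'$).
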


\begin{thm}\label{Stieltjes.ineq.thm.1.2}
Let $u,\,v,\,w$ be weights. Assume that $u$ is such that $U$ is
admissible. Let $\vp$, defined by
$$
\vp(t) : = \esup_{s\in (0,t)} U(s) \esup_{\tau \in
(s,\i)}\frac{w(\tau)}{U(\tau)},\qq t\in\I,
$$
be non-degenerate with respect to $U$.

{\rm (i)} Let $p=1$. Then the  inequality
\begin{equation}\label{Stieltjes.ineq.02}
\|Sh\|_{\i,w,\I} \le c \|h\|_{p,v,\I}
\end{equation}
holds for every $h \in \mp^+(0,\i)$ if and only if
\begin{gather*}
S_6 : = \sup_{x\in\I} \left(\esup_{s\in\I}w(s){\mathcal U}(x,s)
\right) U(x)^{-1}\sup_{t\in (0,x)}U(t) \esup_{s\in (t,\i)}
(U(s)v(s))^{-1}<\i.
\end{gather*}
Moreover, the best constant $c$ in \eqref{Stieltjes.ineq.02}
satisfies that $c\ap S_6$.

{\rm (ii)} Let $1 < p < \i$. Then inequality
\eqref{Stieltjes.ineq.02} holds for every $h \in \mp^+(0,\i)$ if
and only if
\begin{gather*}
S_7 : = \sup_{x\in\I} \left(\esup_{s\in\I}w(s){\mathcal U}(x,s)
\right) \left( \int_0^{\i}{\mathcal U}(t,x)^{p'}
U(t)^{-p'}v(t)^{1-p'}\,dt\right)^{\frac{1}{p'}}<\i.
\end{gather*}
Moreover, the best constant $c$ in \eqref{Stieltjes.ineq.02}
satisfies that $c\ap S_7$.

{\rm (iii)} Let $p = \i$. Then inequality
\eqref{Stieltjes.ineq.02} holds for every $h \in \mp^+(0,\i)$ if
and only if
\begin{gather*}
S_8 : = \sup_{x\in\I} \left(\esup_{s\in\I}w(s){\mathcal U}(x,s)
\right) \left( \int_0^{\i}{\mathcal U}(t,x) U(t)^{-1}
\,\frac{dt}{v(t)}\right)<\i.
\end{gather*}
Moreover, the best constant $c$ in \eqref{Stieltjes.ineq.02}
satisfies that $c\ap S_8$.
\end{thm}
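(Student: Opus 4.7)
The plan is to combine the reduction theorem \ref{Reduction.thm.1.3} (specialized to $q=\i$) with the already-established iterated-Hardy-type characterizations, in exactly the same spirit as the companion Theorem \ref{Stieltjes.ineq.thm.1.1}. The difference here is only that the outer norm against $w$ is the essential supremum, so Theorem \ref{main2002} will be invoked in place of Theorems \ref{main1001}--\ref{main1002} for the case $p=1$, while the cases $1<p<\i$ and $p=\i$ will require the $\t>1$ analogues from \cite{gmp}.

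First I would apply Theorem \ref{Reduction.thm.1.3} with $q=\i$ to see that \eqref{Stieltjes.ineq.02} is equivalent to
\begin{equation*}
\esup_{x\in\I}w(x)\,\frac{1}{U(x)}\int_0^{x}u(t)\Bigl(\int_t^{\i}h(s)\,ds\Bigr)dt\;\le\; c\,\|hU\|_{p,v,\I}.
\end{equation*}
Rewriting $\|hU\|_{p,v,\I}=\|h\|_{p,\widetilde v,\I}$ with $\widetilde v=U^{p}v$ when $p<\i$ and $\widetilde v=Uv$ when $p=\i$, this is precisely the iterated inequality \eqref{eq.4.2} with \emph{inner} exponent $1$, outer exponent $\i$, right-hand side exponent $\t=p$, and with the weight $\widetilde v$ in place of $v$.

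In part (i) we have $\t=p=1$, so $\widetilde v=Uv$ and the reduced inequality is exactly of the form covered by Theorem \ref{main2002}(ii). Applying that theorem with inner $p=1$ and with $v$ replaced by $Uv$, and observing that
\begin{equation*}
\underline{Uv}(t,\i)=\esup_{s\in(t,\i)}(U(s)v(s))^{-1},
\end{equation*}
the condition $I_6$ simplifies immediately to the expression $S_6$ stated in (i); the $U$-admissibility hypothesis and the non-degeneracy of $\vp$ in the reduced problem coincide with those assumed in Theorem \ref{Stieltjes.ineq.thm.1.2}.

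For parts (ii) and (iii) the right-hand side lies in the $\t>1$ regime, which the present paper does not treat directly; I would invoke the corresponding characterizations from \cite{gmp} (specifically Theorems 3.1--3.2 there, already used in the proof of Theorem \ref{Stieltjes.ineq.thm.1.1}) for the iterated inequality with inner exponent $1$, outer exponent $\i$, and $\t=p$ or $\t=\i$. The principal technical obstacle is a careful bookkeeping of the substitution $v\mapsto\widetilde v$: one must verify that when the anti-discretization of \cite{gmp} is specialized to the weight $U^{p}v$ (respectively $Uv$), integration by parts in the spirit of Lemmas \ref{lem3.4}--\ref{lem3.5} collapses the resulting expressions into the integrals $\int_0^{\i}{\mathcal U}(t,x)^{p'}U(t)^{-p'}v(t)^{1-p'}dt$ and $\int_0^{\i}{\mathcal U}(t,x)U(t)^{-1}v(t)^{-1}dt$ appearing in $S_7$ and $S_8$. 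Given the machinery already developed in Section \ref{Antidiscretization}, this should pose no further conceptual difficulty.
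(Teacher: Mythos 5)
Your plan is correct and is essentially the paper's own proof: reduce via Theorem~\ref{Reduction.thm.1.3} to the iterated inequality with inner exponent~$1$, outer exponent~$\infty$, right-hand-side exponent $\theta=p$, and weight $\widetilde v$; then apply Theorem~\ref{main2002}(ii) with $v\mapsto Uv$ for $p=1$, and the $\theta>1$ characterizations (Theorems~3.1--3.2 of \cite{gmp}) for $1<p\le\infty$. The paper's proof is exactly this combination (stated jointly for Theorems~\ref{Stieltjes.ineq.thm.1.1} and~\ref{Stieltjes.ineq.thm.1.2}), and your substitution computation ($\underline{Uv}(t,\i)=\esup_{s\in(t,\i)}(U(s)v(s))^{-1}$ and $(U^{p}v)^{1-p'}=U^{-p'}v^{1-p'}$) matches $S_6$--$S_8$.
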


\vskip0.4cm
\begin{flushleft}
Amiran Gogatishvili\\
Institute of Mathematics
\\Academy of Sciences of Czech Republic  \\
Zitna 25, CZ - 115 67 Praha 1,\\
Czech Republic \\
\vskip0.2cm E-mail: gogatish@math.cas.cz
\end{flushleft}

\vskip0.2cm
\begin{flushleft}
Rza Mustafayev \\
Department of Mathematics, \\
Faculty of Science and Arts, \\
Kirikkale University, 71450 Yahsihan, Kirikkale, \\
Turkey \\
\vskip0.2cm E-mail: rzamustafayev@gmail.com
\end{flushleft}

\vskip0.2cm
\begin{flushleft}
Lars-Erik Persson\\

Department of Engineering Sciences and Mathematics, \\
Lule\aa\ University of Technology,\\
SE-971 87 Lule\aa , Sweden. \\

\vskip0.2cm
Narvik University College,\\
P.O. Box 385, N 8505 Narvik, Norway \\
\vskip0.2cm E-mail: larserik@ltu.se
\end{flushleft}
\end{document}